\shorttitle}
\@nx\MakeUppercase{\the\toks@}}
\patchcmd\@settitle{\uppercasenonmath\@title}{\Large}{}{}
\authors}
\newtheorem{theorem}{Theorem}[section]
\newtheorem{corollary}{Corollary}[section]
\newtheorem{proposition}{Proposition}[section]
\newtheorem{lemma}{Lemma}[section]
\newtheorem{remark}{Remark}[section]
\newtheorem{example}{Example}[section]
\newtheorem{examples}{Examples}[section]
\numberwithin{equation}{section}
  \newtheorem{thqt}{Theorem}
  \newtheorem{lemqt}[thqt]{Lemma}
  \renewcommand{\thethqt}{\Alph{thqt}}
\begin{document}
\author[T. Bottazzi, C. Conde and K. Feki] {\Large{Tamara Bottazzi}$^{1_{a,b}}$, \Large{Cristian Conde}$^{2_{a,b}}$ and \Large{Kais Feki}$^{3}$}

\address{$^{[1_a]}$ Universidad Nacional de R\'io Negro. LaPAC, Sede Andina (8400) S.C. de Bariloche, Argentina.}
\address{$^{[1_b]}$ Consejo Nacional de Investigaciones Cient\'ificas y T\'ecnicas, (1425) Buenos Aires,
	Argentina.}
\email{\url{tbottazzi@unrn.edu.ar}}

\address{$^{[2_a]}$ Instituto Argentino de Matem\'atica ``Alberto Calder\'on", Saavedra 15 3er. piso, (C1083ACA), Buenos Aires, Argentina}
\address{$^{[2_b]}$ Instituto de Ciencias, Universidad Nacional de Gral. Sarmiento, J. M. Gutierrez 1150, (B1613GSX) Los Polvorines, Argentina}
\email{\url{cconde@ungs.edu.ar}}

\address{$^{[3]}$ University of Sfax, Sfax, Tunisia.}
\email{\url{kais.feki@hotmail.com}}

\keywords{Positive operator, numerical radius, orthogonality, parallelism.}
\subjclass[2010]{47B65, 47A12, 46C05, 47A05..}
\date{\today}

\title[On $A$-parallelism and $A$-Birkhoff-James orthogonality of operators]
{On $A$-parallelism and $A$-Birkhoff-James orthogonality of operators}
\maketitle

\begin{abstract}
In this paper, we establish several characterizations of the $A$-parallelism of bounded linear operators with respect to the seminorm induced by a positive operator $A$ acting on a complex Hilbert space. Among other things, we investigate the relationship between $A$-seminorm-parallelism and $A$-Birkhoff-James orthogonality of $A$-bounded operators. In particular, we characterize $A$-bounded operators which satisfy the $A$-Daugavet equation. In addition,  we relate the $A$-Birkhoff-James orthogonality of operators and distance formulas and we give an explicit formula of the center mass for $A$-bounded operators. Some other related results are also discussed.
\end{abstract}

\section{Introduction and Preliminaries}\label{s1}
Let $\mathcal{B}(\mathcal{H})$ denote the $C^*$-algebra of all bounded linear operators acting on a non trivial complex Hilbert space
$\mathcal{H}$ with an inner product $\langle\cdot,\cdot\rangle$ and the corresponding norm $\|\cdot\|$. The symbol $I_{\mathcal{H}}$ stands for the identity operator on $\mathcal{H}$ (or $I$ if no confusion arises).

In all that follows, by an operator we mean a bounded linear operator. The range of every operator is denoted by $\mathcal{R}(T)$, its null space by $\mathcal{N}(T)$ and $T^*$ is the adjoint of $T$.  If $T, S\in\mathcal{B}(\mathcal{H}),$ we write  $T\geq S$ whenever $\langle Tx,x\rangle \geq \langle Sx,x\rangle$ for all $x\in \mathcal{H}$. An element $A\in \mathcal{B}(\mathcal{H})$ such that $A\geq0$ is called positive.  For every $A\geq 0$, there exists a unique positive $A^{1/2}\in \mathcal{B}(\mathcal{H})$ such that $A=(A^{1/2})^2$. For the rest of this article, we assume that $A\in\mathcal{B}(\mathcal{H})$ is a positive nonzero operator, which clearly induces the following semi-inner product
$$\langle\cdot,\cdot\rangle_{A}:\mathcal{H}\times \mathcal{H}\Rightarrow\mathbb{C},\;(x,y)\longmapsto\langle x, y\rangle_{A} :=\langle Ax, y\rangle.$$
Notice that the induced seminorm is given by $\|x\|_A=\sqrt{\langle x, x\rangle_A}$, for every $x\in \mathcal{H}$. This makes $\mathcal{H}$ into a semi-Hilbertian space. One can check that $\|\cdot\|_A$ is a norm on $\mathcal{H}$ if and only if $A$ is injective, and that $(\mathcal{H},\|\cdot\|_A)$ is complete if and only if $\mathcal{R}(A)$ is closed.  The semi-inner product $\langle\cdot,\cdot\rangle_A$ induces an inner product on the quotient space $\mathcal{H}/\mathcal{N}(A)$ defined as
$$[\overline{x},\overline{y}]=\langle Ax, y\rangle,$$
for all $\overline{x},\overline{y}\in \mathcal{H}/\mathcal{N}(A)$. Notice that $(\mathcal{H}/\mathcal{N}(A),[\cdot,\cdot])$ is not complete unless $\mathcal{R}(A)$ is a closed subset of $\mathcal{H}$. However, a canonical construction due to L. de Branges and J. Rovnyak in \cite{branrov} (see also \cite{fekilaa}) shows that the completion of $\mathcal{H}/\mathcal{N}(A)$  under the inner product $[\cdot,\cdot]$ is isometrically isomorphic to the Hilbert space $\mathcal{R}(A^{1/2})$
with the inner product
\begin{align}\label{I.1}
\langle A^{1/2}x,A^{1/2}y\rangle_{\mathbf{R}(A^{1/2})}:=\langle P_{\overline{\mathcal{R}(A)}}x, P_{\overline{\mathcal{R}(A)}}y\rangle,\;\forall\, x,y \in \mathcal{H},
\end{align}
where $P_{\overline{\mathcal{R}(A)}}$ denotes the orthogonal projection $\overline{\mathcal{R}(A)}$.

 For the sequel, the Hilbert space $\big(\mathcal{R}(A^{1/2}), \langle\cdot,\cdot\rangle_{\mathbf{R}(A^{1/2})}\big)$ will be denoted by $\mathbf{R}(A^{1/2})$.
 One can observed that by using \eqref{I.1}, it can be checked that
\begin{align*}
\langle Ax,Ay\rangle_{\mathbf{R}(A^{1/2})}= {\langle x, y\rangle}_{A}\quad\forall\, x,y \in \mathcal{H},
\end{align*}
which in turn implies that
\begin{align}\label{newhilbert}
\| Ax\|_{\mathbf{R}(A^{1/2})}=\|x\|_{A},
\end{align}
for all $x\in \mathcal{H}$. We refer the reader to \cite{acg3} and the references therein for more information concerning the Hilbert space $\mathbf{R}(A^{1/2})$.

For $T\in \mathcal{B}(\mathcal{H})$, an operator $S\in \mathcal{B}(\mathcal{H})$ is said an $A$-adjoint operator of $T$ if the identity ${\langle Tx, y\rangle}_{A} = {\langle x, Sy\rangle}_{A}$ holds for every $x, y\in \mathcal{H}$, or equivalently, $S$ is solution of the operator equation $AX= T^*A$. Notice that this kind of equation can be investigated by using the following well-known theorem due to Douglas (for its proof see \cite{Dou} or \cite{M.K.X}).
\begin{thqt}\label{doug}
If $T, S \in \mathcal{B}(\mathcal{H})$, then the following statements are equivalent:
\begin{itemize}
\item[{\rm (i)}] $\mathcal{R}(S) \subseteq \mathcal{R}(T)$.
\item[{\rm (ii)}] $TD=S$ for some $D\in \mathcal{B}(\mathcal{H})$.
\item[{\rm (iii)}] There exists  $\lambda> 0$ such that $\|S^*x\| \leq \lambda\|T^*x\|$ for all $x\in \mathcal{H}$.
\end{itemize}
If one of these conditions holds, then there exists a unique solution of the operator equation $TX=S$, denoted by $Q$, such that $\mathcal{R}(Q) \subseteq \overline{\mathcal{R}(T^{*})}$. Such $Q$ is called the reduced solution of $TX=S$.
\end{thqt}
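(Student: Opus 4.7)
The plan is to establish the chain of implications $(\mathrm{ii})\Rightarrow(\mathrm{i})$, $(\mathrm{ii})\Rightarrow(\mathrm{iii})$, $(\mathrm{iii})\Rightarrow(\mathrm{ii})$ and $(\mathrm{i})\Rightarrow(\mathrm{ii})$, and then to verify existence and uniqueness of the reduced solution. Two of these are essentially free: $(\mathrm{ii})\Rightarrow(\mathrm{i})$ follows at once from $\mathcal{R}(S)=\mathcal{R}(TD)\subseteq\mathcal{R}(T)$, and taking adjoints in $S=TD$ gives $\|S^{*}x\|=\|D^{*}T^{*}x\|\leq\|D\|\,\|T^{*}x\|$, so $\lambda:=\|D\|$ works for $(\mathrm{iii})$.

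For $(\mathrm{iii})\Rightarrow(\mathrm{ii})$, the idea is to construct an auxiliary operator $F\in\mathcal{B}(\mathcal{H})$ satisfying $FT^{*}=S^{*}$ and then set $D:=F^{*}$. I would first define $F$ on $\mathcal{R}(T^{*})$ by the formula $F(T^{*}x):=S^{*}x$. The inequality in $(\mathrm{iii})$ does a double duty here: if $T^{*}x=T^{*}y$, then $\|S^{*}x-S^{*}y\|\leq\lambda\|T^{*}(x-y)\|=0$, which makes $F$ well-defined; and the same bound shows $\|F\|\leq\lambda$ on $\mathcal{R}(T^{*})$. I then extend $F$ by continuity to $\overline{\mathcal{R}(T^{*})}$ and by zero on the complementary subspace $\mathcal{N}(T)=\overline{\mathcal{R}(T^{*})}^{\perp}$. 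Taking adjoints in $FT^{*}=S^{*}$ yields $TD=S$ with $D=F^{*}$, and the construction automatically forces $\mathcal{R}(D)\subseteq\overline{\mathcal{R}(T^{*})}$, so this $D$ is already the natural candidate for $Q$.

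For $(\mathrm{i})\Rightarrow(\mathrm{ii})$, the standard route is the closed graph theorem. For each $y\in\mathcal{H}$ the inclusion $\mathcal{R}(S)\subseteq\mathcal{R}(T)$ produces some $z$ with $Tz=Sy$; projecting onto $\mathcal{N}(T)^{\perp}=\overline{\mathcal{R}(T^{*})}$ singles out a unique such $z$, and I set $Dy:=z$. Linearity is routine; for boundedness I would take $y_{n}\to y$ with $Dy_{n}\to w$, use continuity of $S$ and $T$ to deduce $Sy=Tw$, observe that $w\in\overline{\mathcal{R}(T^{*})}$ as a limit of such vectors, and conclude $w=Dy$ by uniqueness.

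The main obstacle is the extension step in $(\mathrm{iii})\Rightarrow(\mathrm{ii})$: the range $\mathcal{R}(T^{*})$ need not be closed, so one really does need both the quantitative bound in $(\mathrm{iii})$ to control the extension and the orthogonal decomposition $\mathcal{H}=\overline{\mathcal{R}(T^{*})}\oplus\mathcal{N}(T)$ to define $F$ off $\overline{\mathcal{R}(T^{*})}$. Once this $D$ is in hand, uniqueness of the reduced solution is straightforward: if $TQ_{1}=TQ_{2}=S$ with $\mathcal{R}(Q_{i})\subseteq\overline{\mathcal{R}(T^{*})}$, then $\mathcal{R}(Q_{1}-Q_{2})\subseteq\mathcal{N}(T)\cap\overline{\mathcal{R}(T^{*})}=\{0\}$, hence $Q_{1}=Q_{2}$.
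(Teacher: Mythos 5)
Your proof is correct, but note that the paper does not prove this statement at all: it is quoted as Theorem A with an explicit pointer to Douglas's original article \cite{Dou} (and to \cite{M.K.X}), so there is no in-paper argument to compare against. What you have written is essentially Douglas's own proof: the trivial implications $(\mathrm{ii})\Rightarrow(\mathrm{i})$ and $(\mathrm{ii})\Rightarrow(\mathrm{iii})$, the construction of $F$ on $\mathcal{R}(T^{*})$ with continuous extension and zero extension on $\mathcal{N}(T)=\overline{\mathcal{R}(T^{*})}^{\perp}$ for $(\mathrm{iii})\Rightarrow(\mathrm{ii})$, the closed graph theorem for $(\mathrm{i})\Rightarrow(\mathrm{ii})$, and uniqueness of the reduced solution from $\mathcal{N}(T)\cap\overline{\mathcal{R}(T^{*})}=\{0\}$. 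All steps check out, including the observation that $\mathcal{N}(T)\subseteq\mathcal{N}(F)$ forces $\mathcal{R}(F^{*})\subseteq\mathcal{N}(F)^{\perp}\subseteq\overline{\mathcal{R}(T^{*})}$. The only cosmetic point is in $(\mathrm{ii})\Rightarrow(\mathrm{iii})$: if $D=0$ then $\lambda=\|D\|$ is not strictly positive as the statement requires, so take $\lambda=\|D\|+1$ instead.
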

If we denote by $\mathcal{B}_{A}(\mathcal{H})$ and $\mathcal{B}_{A^{1/2}}(\mathcal{H})$ the sets of all operators that admit $A$-adjoints and $A^{1/2}$-adjoints, respectively, then an application of Theorem \ref{doug} gives
\begin{align*}
\mathcal{B}_{A}(\mathcal{H}) = \big\{T\in\mathcal{B}(\mathcal{H})\,; \; \mathcal{R}(T^*A) \subseteq \mathcal{R}(A)\big\},
\end{align*}
and
\begin{align*}
\mathcal{B}_{A^{1/2}}(\mathcal{H}) = \big\{T\in \mathcal{B}(\mathcal{H})\,; \,\, \exists\, c>0\,;
\,\,{\|Tx\|}_{A} \le  c{\|x\|}_{A}, \,\, \forall x\in \mathcal{H}\big\}.
\end{align*}
Operators in $ \mathcal{B}_{A^{1/2}}(\mathcal{H})$ are called $A$-bounded. Notice that $\mathcal{B}_{A}(\mathcal{H})$ and $\mathcal{B}_{A^{1/2}}(\mathcal{H})$ are two subalgebras of $\mathcal{B}(\mathcal{H})$ which are, in general, neither closed nor dense in $\mathcal{B}(\mathcal{H})$ (see \cite{acg1}). Moreover, the following inclusions $\mathcal{B}_{A}(\mathcal{H})\subseteq \mathcal{B}_{A^{1/2}}(\mathcal{H})\subseteq \mathcal{B}(\mathcal{H})$ hold and are in general proper (see \cite{feki01}).

If $T\in \mathcal{B}_A(\mathcal{H})$, the reduced solution of the equation $AX=T^*A$ will be denoted by $T^{\sharp_A}$. Note that, $T^{\sharp_A}=A^\dag T^*A$. Here $A^\dag$ is the Moore-Penrose inverse of $A$. For more results concerning $T^{\sharp_A}$ see \cite{acg1,acg2}.

Further, $\langle\cdot,\cdot\rangle_{A}$ induces the following seminorm on $\mathcal{B}_{A^{1/2}}(\mathcal{H})$
\begin{equation}\label{semii}
\|T\|_A:=\sup_{\substack{x\in \overline{\mathcal{R}(A)},\\ x\not=0}}\frac{\|Tx\|_A}{\|x\|_A}=\sup\big\{{\|Tx\|}_A\,; \,\,x\in \mathcal{H},\, {\|x\|}_A =1\big\}<\infty.
\end{equation}
It can be observed that for $T\in\mathcal{B}_{A^{1/2}}(\mathcal{H})$, $\|T\|_A=0$ if and only if $AT=0$. Notice that it was proved in \cite{fg} that for $T\in\mathcal{B}_{A^{1/2}}(\mathcal{H})$ we have
\begin{equation}\label{newsemi}
\|T\|_A=\sup\left\{|\langle Tx, y\rangle_A|\,;\;x,y\in \mathcal{H},\,\|x\|_{A}=\|y\|_{A}= 1\right\}.
\end{equation}
It can be verified that, for $T\in\mathcal{B}_{A^{1/2}}(\mathcal{H})$,
we have ${\|Tx\|}_{A}\leq {\|T\|}_{A}{\|x\|}_{A}$ for all $x\in \mathcal{H}$.
This implies that, for $T, S\in\mathcal{B}_{A^{1/2}}(\mathcal{H})$, we have ${\|TS\|}_{A}\leq {\|T\|}_{A}{\|S\|}_{A}$.
Notice that it may happen that ${\|T\|}_A = + \infty$
for some $T\in\mathcal{B}(\mathcal{H})$ (see \cite{feki01}). For more details concerning $A$-bounded operators, see \cite{acg3} and the references therein.

Recently, A. Saddi generalized in \cite{saddi} the concept of the numerical radius of Hilbert space operators and defined the $A$-numerical radius of an operator $T\in \mathcal{B}(\mathcal{H})$ by
\begin{equation}\label{Aradius}
\omega_A(T)=\sup\{|\langle Tx, x\rangle_A|: x\in \mathcal{H}, \|x\|_A=1\}.
\end{equation}
If $T\in\mathcal{B}_{A^{1/2}}(\mathcal{H})$ then $\omega_A(T)<+\infty$ and
\begin{equation}\label{equiomega}
\frac12 \|T\|_A\leq \omega_A(T)\leq \|T\|_A.
\end{equation}

Recently, The $A$-Davis-Wielandt radius of and operator $T\in\mathcal{B}(\mathcal{H})$ is defined, as in \cite{fekisidha2019}, by
\begin{align*}
d\omega_A(T)
&=\displaystyle\sup\left\{\sqrt{|\langle Tx, x\rangle_A|^2+\|Tx\|_A^4}\,;\;x\in \mathcal{H},\;\|x\|_A=1\right\}.
\end{align*}

Notice that it was shown in \cite{fekisidha2019}, that for $T\in\mathcal{B}(\mathcal{H})$, $d\omega_A(T)$ can be equal to $+\infty$. However, if $T\in\mathcal{B}_{A^{1/2}}(\mathcal{H})$, then we have
$$\max\{\omega_A(T),\|T\|_A^2\}\leq d\omega_A(T) \leq \sqrt{\omega_A(T)^2+\|T\|_A^4}<\infty.$$

Recall that an operator $T\in\mathcal{B}(\mathcal{H})$ is said to be $A$-selfadjoint if $AT$ is selfadjoint, that is, $AT = T^*A$. Observe that if $T$ is $A$-selfadjoint, then $T\in\mathcal{B}_A(\mathcal{H})$. However, it does not hold, in general, that $T = T^{\sharp_A}$. More precisely, if $T\in\mathcal{B}_A(\mathcal{H})$, then $T = T^{\sharp_A}$ if and only if $T$ is $A$-selfadjoint and $\mathcal{R}(T) \subseteq \overline{\mathcal{R}(A)}$ (see \cite[Section 2]{acg1}). Further, an operator $T\in\mathcal{B}_A(\mathcal{H})$ is called $A$-normal if $TT^{\sharp_A} = T^{\sharp_A}T$ (see \cite{bakfeki04}).
It is obvious that every selfadjoint operator is normal. However, an $A$-selfadjoint operator is not necessarily $A$-normal (see \cite[Example 5.1]{bakfeki04}).

Now, let $\mathbb{T}$ denote the unit cycle of the complex plane, i.e. $\mathbb{T}=\{\lambda\in \mathbb{C}\,;\;|\lambda|=1 \}$.

 Recall from \cite{fekisidha2019} that an operator $T\in \mathcal{B}_{A^{1/2}}(\mathcal{H})$ is said to be $A$-norm-parallel to an operator $S\in \mathcal{B}_{A^{1/2}}(\mathcal{H})$, in short $T\parallel_A S$, if there exists such that $\|T+\lambda S\|_A=\|T\|_A+\|S\|_A$.

For $T\in\mathcal{B}\left( \mathcal{H}\right)$, the $A$-numerical range of $T$ is defined, as in \cite{bakfeki01}, by
  \begin{align*}
  W_A(T) = \left\{ {\left\langle {Tx, x} \right\rangle_A\,;\;x
    \in \mathcal{H},\left\| x \right\|_A = 1} \right\}.
\end{align*}

Recently, the concept of the $A$-spectral radius of $A$-bounded operators has been introduced in \cite{feki01} as follows:
\begin{equation}\label{newrad}
r_A(T):=\displaystyle\inf_{n\geq 1}\|T^n\|_A^{\frac{1}{n}}=\displaystyle\lim_{n\to\infty}\|T^n\|_A^{\frac{1}{n}}.
\end{equation}
We note here that the second equality in \eqref{newrad} is also proved in \cite[Theorem 1]{feki01}. An operator $T\in \mathcal{B}_{A^{1/2}}(\mathcal{H})$ is said to be $A$-normaloid if $r_A(T)=\|T\|_A$. Moreover, $T$ is called $A$-spectraloid if $r_A(T)=\omega_A(T)$. It was shown in \cite{feki01} that for every $A$-normaloid operator $T\in \mathcal{B}_{A^{1/2}}(\mathcal{H})$ we have
\begin{equation}\label{Anormaloid}
r_A(T)=\omega_A(T)=\|T\|_A.
\end{equation}
So every $A$-normaloid operator is $A$-spectraloid. The following lemma will be used in due course of time. Notice that the proof of the assertion $(i)$ can be found in \cite{acg3}. Further, for the proof of the assertions $(ii)$ and $(iii)$ we refer to \cite{feki01}. Also, the assertion $(iv)$ has been proved in \cite{majsecesuci}. Finally, the proof of last assertion can be found in \cite{fekisidha2019}.
\begin{lemma}\label{lem2}
	Let $T\in \mathcal{B}(\mathcal{H})$. Then $T\in \mathcal{B}_{A^{1/2}}(\mathcal{H})$ if and only if there exists a unique $\widetilde{T}\in \mathcal{B}(\mathbf{R}(A^{1/2}))$ such that $Z_AT =\widetilde{T}Z_A$. Here, $Z_{A}: \mathcal{H} \rightarrow \mathbf{R}(A^{1/2})$ is
	defined by $Z_{A}x = Ax$. Moreover, the following properties hold
\begin{itemize}
  \item [(i)] $\|T\|_A=\|\widetilde{T}\|_{\mathcal{B}(\mathbf{R}(A^{1/2}))}$.
  \item [(ii)] $r_A(T)=r(\widetilde{T})$.
  \item [(iii)] $\overline{W_A(T)}=\overline{W(\widetilde{T})}$.
  \item [(iv)]  $\widetilde{T^{\sharp_A}}=(\widetilde{T})^*.$
 \item [(v)] If $T,S\in \mathcal{B}_{A^{1/2}}(\mathcal{H})$, then $T\parallel_AS$ if and only if $\widetilde{T}\parallel \widetilde{S}$.
\end{itemize}
\end{lemma}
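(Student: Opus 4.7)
The plan is to first establish existence, uniqueness, and the structural properties of the assignment $T \mapsto \widetilde{T}$, and then read off the five statements as immediate consequences. I would define $\widetilde{T}$ on the subspace $Z_A(\mathcal{H}) = \mathcal{R}(A)$ of $\mathbf{R}(A^{1/2})$ by $\widetilde{T}(Ax) := ATx$. Well-definedness reduces, via \eqref{newhilbert}, to the implication $\|x-y\|_A = 0 \Rightarrow \|T(x-y)\|_A = 0$, which is precisely the defining inequality for $T \in \mathcal{B}_{A^{1/2}}(\mathcal{H})$. Boundedness of $\widetilde{T}$ on $\mathcal{R}(A)$ follows from
\[
\|\widetilde{T}(Ax)\|_{\mathbf{R}(A^{1/2})} = \|Tx\|_A \le \|T\|_A\|x\|_A = \|T\|_A\|Ax\|_{\mathbf{R}(A^{1/2})},
\]
and density of $\mathcal{R}(A)$ in $\mathbf{R}(A^{1/2})$ (a standard consequence of the de Branges--Rovnyak identification recalled before \eqref{I.1}) lets $\widetilde{T}$ extend uniquely by continuity to all of $\mathbf{R}(A^{1/2})$. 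The reverse direction of the equivalence is immediate: if such a $\widetilde{T}$ exists, then $\|Tx\|_A = \|\widetilde{T}(Ax)\|_{\mathbf{R}(A^{1/2})} \le \|\widetilde{T}\|\,\|x\|_A$, so $T$ is $A$-bounded.

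Item (i) follows by taking suprema in the displayed inequality, with density ensuring the supremum over $\mathcal{R}(A)$ coincides with the operator norm of $\widetilde{T}$. For (ii), I would first verify multiplicativity $\widetilde{TS} = \widetilde{T}\,\widetilde{S}$ by testing both sides on $Ax$; induction then yields $\widetilde{T^n} = (\widetilde{T})^n$, and combining (i) with \eqref{newrad} and Gelfand's classical formula gives $r_A(T) = r(\widetilde{T})$. For (iii), the core identity
\[
\langle \widetilde{T}(Ax), Ax \rangle_{\mathbf{R}(A^{1/2})} = \langle ATx, Ax \rangle_{\mathbf{R}(A^{1/2})} = \langle Tx, x \rangle_A,
\]
together with $\|Ax\|_{\mathbf{R}(A^{1/2})} = \|x\|_A$, shows $W_A(T) \subseteq W(\widetilde{T})$, and density upgrades the reverse inclusion up to closure. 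For (iv), a parallel sesquilinear computation using $\langle T^{\sharp_A} x, y\rangle_A = \langle x, Ty\rangle_A$ shows that $\widetilde{T^{\sharp_A}}$ satisfies the adjoint relation of $(\widetilde{T})^*$ on the dense set $\mathcal{R}(A) \times \mathcal{R}(A)$, hence on all of $\mathbf{R}(A^{1/2})$.

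Assertion (v) is then essentially a corollary. Linearity of the tilde map gives $\widetilde{T + \lambda S} = \widetilde{T} + \lambda \widetilde{S}$ for every $\lambda \in \mathbb{T}$, and (i) translates this into $\|T + \lambda S\|_A = \|\widetilde{T} + \lambda \widetilde{S}\|_{\mathcal{B}(\mathbf{R}(A^{1/2}))}$, with the analogous equalities for $T$ and $S$ alone. Hence the $A$-parallelism equation $\|T + \lambda S\|_A = \|T\|_A + \|S\|_A$ holds for some $\lambda \in \mathbb{T}$ if and only if $\|\widetilde{T} + \lambda \widetilde{S}\| = \|\widetilde{T}\| + \|\widetilde{S}\|$ holds for the same $\lambda$. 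The main technical obstacle throughout is keeping the density and continuity bookkeeping on $\mathbf{R}(A^{1/2})$ straight, since its inner product \eqref{I.1} involves the projection $P_{\overline{\mathcal{R}(A)}}$ rather than the ambient one on $\mathcal{H}$; once this bookkeeping is done, each item reduces to testing an identity on $\{Ax : x \in \mathcal{H}\}$ and invoking continuity.
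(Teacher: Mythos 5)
Your argument is correct: the construction of $\widetilde{T}$ on the dense subspace $Z_A(\mathcal{H})=\mathcal{R}(A)$ of $\mathbf{R}(A^{1/2})$, the extension by continuity, and the verification of (i)--(v) by testing identities on $\{Ax: x\in\mathcal{H}\}$ and invoking density is exactly the standard route taken in the references the paper cites for this lemma (Arias--Corach--Gonzalez for (i), Feki for (ii)--(iii), Majdak--Secelean--Suciu for (iv), Feki--Sid Ahmed for (v)). The paper itself offers no proof, only these citations, so there is nothing to contrast; your write-up is a faithful self-contained version of the cited arguments.
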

Recently, several results covering some classes of Hilbert space operators where extended to $A$-bounded operators, see \cite{fekilaa,feki01,feki03,fekisidha2019,majsecesuci,zamani3,zamani2019_2} and the references therein.

The remainder of the paper is organized as follows. Section \ref{s2} we present different characterization of  notion of $A$-seminorm-parallelism and in particular we investigate when the $A$-Davis-Wielandt radius of and operator coincides with its upper bound. In section \ref{s3},  we  give another characterizations of $A$-seminorm-parallelism related to $A$-Birkhoff-James orthogonality. Finally, section \ref{s4} is devoted to obtain some formulas for the $A$-center of mass of $A$-bounded operators  using well-known distance formulas.

\section{$A$-seminorm-parallelism}\label{s2}
We start our work with the following examples of seminorm-parallelism in semi-Hilbert spaces.
\begin{examples}
\begin{itemize}
  \item [(1)] Let $T,S\in \mathcal{B}_{A^{1/2}}(\mathcal{H})$ be linearly dependent operators. Then $T \parallel_A S$ (see \cite[Example 3]{fekisidha2019}).
  \item [(2)] Let $A=\begin{pmatrix}1&0\\0&2\end{pmatrix}$ and $T=\begin{pmatrix}
1 & 0\\
0 & -1
\end{pmatrix}$ be operators acting on $\mathbb{C}^2$. Then for $\lambda=1$, simple computations show that
\begin{align*}
\|T + \lambda I\|_A = \|T\|_A+\|I\|_A = 2.
\end{align*}
Hence $T \parallel_A I.$
\item [(3)]Let $\lambda>0$ and $A, T, S:\ell^2(\mathbb{N})\to \ell^2(\mathbb{N})$ be such that
$$S(\overline{x})=(\lambda x_1, \lambda x_2, x_3,  x_4, \ldots), \quad T(\overline{x})=(0, \lambda x_2,  x_3,  x_4, \ldots)$$
and
$$A(\overline{x})=(0, x_2, 0, 0, \ldots),$$
for every $\overline{x}=(x_1, x_2, \ldots, x_n, \ldots)\in \ell^2(\mathbb{N})$,  where $\mathbb{N}$ denotes the set of all positive integers. Clearly, $A\geq 0.$ Further, it can be observed that $\|T\|_A=\|S\|_A=\lambda.$ Now, let $\{e_j\}_{j\in \mathbb{N}}$ be the canonical orthogonal basis of $\mathcal{H}=\ell^2(\mathbb{N}).$ Then, we have
$$\|(T+S)(e_2)\|_A^2=4\lambda^2.$$
Thus, $2\lambda\leq\|T+S\|_A\leq \|T\|_A+\|S\|_A=2\lambda.$ Therefore $T\parallel_A S.$
\end{itemize}
\end{examples}

In the following proposition we state some basic properties of operator seminorm-parallelism in $\mathcal{B}_A(\mathcal{H})$.

\begin{proposition}\label{pr.11}
Let $T, S\in \mathcal{B}_{A^{1/2}}(\mathcal{H})$. The following statements are equivalent:
\begin{itemize}
  \item [(1)] $T\parallel_A S$.
  \item [(2)] $\alpha T\parallel_A \alpha S$ for every $\alpha\in\mathbb{C}\setminus\{0\}$.
  \item [(3)] $\beta T\parallel_A \gamma S$ for every $\beta, \gamma\in\mathbb{R}\setminus\{0\}$
\end{itemize}
\end{proposition}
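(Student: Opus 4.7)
The plan is to establish a short cycle of implications. The implications $(2) \Rightarrow (1)$ and $(3) \Rightarrow (1)$ are trivial by specialization ($\alpha = 1$ and $\beta = \gamma = 1$, respectively). The implication $(1) \Rightarrow (2)$ is almost immediate from the absolute homogeneity of the seminorm: if $\lambda \in \mathbb{T}$ witnesses $T \parallel_A S$, then the same $\lambda$ witnesses $\alpha T \parallel_A \alpha S$, since
$$\|\alpha T + \lambda(\alpha S)\|_A = |\alpha|\,\|T + \lambda S\|_A = |\alpha|\|T\|_A + |\alpha|\|S\|_A = \|\alpha T\|_A + \|\alpha S\|_A.$$
So the real content lies in $(1) \Rightarrow (3)$, where a real rescaling by $\beta,\gamma$ of different magnitudes forces us to push the unimodular parameter away from $\mathbb{T}$, so we must produce a different witness.

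The key tool I would first prove is the following convexity lemma: if $\|T + \lambda_0 S\|_A = \|T\|_A + \|S\|_A$ for some $\lambda_0 \in \mathbb{T}$, then for every $t \geq 0$,
$$\|T + t\lambda_0 S\|_A = \|T\|_A + t\|S\|_A.$$
For $t \in [0,1]$ the upper bound follows from the convex combination $T + t\lambda_0 S = (1-t)T + t(T + \lambda_0 S)$ and the triangle inequality for $\|\cdot\|_A$; the matching lower bound comes from the decomposition $T + \lambda_0 S = (T + t\lambda_0 S) + (1-t)\lambda_0 S$ which yields $\|T + \lambda_0 S\|_A \leq \|T + t\lambda_0 S\|_A + (1-t)\|S\|_A$. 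For $t > 1$ I apply the same trick with $1/t$ in the role of $t$, writing $T + \lambda_0 S = \tfrac{1}{t}(T + t\lambda_0 S) + (1 - \tfrac{1}{t})T$.

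With this lemma in hand, the deduction $(1) \Rightarrow (3)$ is direct. Given $\beta, \gamma \in \mathbb{R}\setminus\{0\}$, factor:
$$\|\beta T + \mu \gamma S\|_A = |\beta|\,\bigl\|T + (\mu\gamma/\beta)S\bigr\|_A.$$
Choosing $\mu = \operatorname{sgn}(\beta\gamma)\lambda_0 \in \mathbb{T}$ makes $\mu\gamma/\beta = t\lambda_0$ with $t = |\gamma|/|\beta| > 0$, and the convexity lemma then yields
$$\|\beta T + \mu\gamma S\|_A = |\beta|\bigl(\|T\|_A + t\|S\|_A\bigr) = \|\beta T\|_A + \|\gamma S\|_A,$$
so $\beta T \parallel_A \gamma S$. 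The main (indeed only) obstacle is the convexity lemma; once established, the remaining steps are purely bookkeeping. As an alternative, one could pass to the induced operators $\widetilde{T},\widetilde{S}$ on $\mathbf{R}(A^{1/2})$ via Lemma \ref{lem2}(i),(v) and argue in an honest norm, but the same convexity computation reappears, so there is no real saving.
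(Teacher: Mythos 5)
Your proof is correct and follows essentially the same route as the paper: the core of your convexity lemma --- the reverse triangle inequality applied to the decomposition $T+\lambda_0 S=(T+t\lambda_0 S)+(1-t)\lambda_0 S$ --- is precisely the paper's estimate $\|\beta(T+\lambda S)-(\beta-\gamma)\lambda S\|_A\geq\beta\|T+\lambda S\|_A-(\beta-\gamma)\|S\|_A$ after rescaling by $\beta$. If anything, yours is the more complete version, since the paper only treats the case $\beta\geq\gamma>0$, whereas your choice $\mu=\operatorname{sgn}(\beta\gamma)\lambda_0$ together with the $t>1$ case handles all real nonzero $\beta,\gamma$ explicitly.
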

\begin{proof}
Notice that equivalence (1)$\Leftrightarrow$(2) follows immediately from the definition of $A$-operator parallelism.\\
(1)$\Rightarrow$(3) Assume that $T\parallel_A S$. Thus $\|T+\lambda S\|_A=\|T\|_A+\|S\|_A$ for some $\lambda\in\mathbb{T}$. Let $\beta, \gamma\in\mathbb{R}\setminus\{0\}$. We suppose that $\beta \geq \gamma>0$. Hence, we see that
\begin{align*}
\|\beta T\|_A+\|\gamma S\|_A
&\geq\|\beta T+\lambda(\gamma S)\|_A\\
&=\|\beta(T+\lambda S)-(\beta-\gamma)(\lambda S)\|_A\\
&\geq \|\beta(T+\lambda S)\|_A-\|(\beta-\gamma)\lambda S\|_A\\
&=\beta\|T+\lambda S\|_A-(\beta-\gamma)\|S\|_A\\
&=\beta(\|T\|_A+\|S\|_A)-(\beta-\gamma)\|S\|_A\\
&=\|\beta T\|_A+\|\gamma S\|_A.
\end{align*}
So, $\|\beta T+\lambda(\gamma S)\|_A=\|\beta T\|_A+\|\gamma S\|_A$ for some $\lambda\in\mathbb{T}$. Therefore $\beta T\parallel_A \gamma S$.\\
(3)$\Rightarrow$(1) is trivial.
\end{proof}

The following lemma is useful in the sequel.
\begin{lemma}\label{lem3}
Let $T, S\in \mathcal{B}_{A^{1/2}}(\mathcal{H})$. Then the following statements are equivalent:
\begin{itemize}
  \item [(i)] $T\parallel_A S$.
  \item [(ii)] There exist a sequence of $A$-unit vectors $\{x_n\}$ in $\mathcal{H}$ and $\lambda\in\mathbb{T}$ such that
\begin{equation*}
\lim_{n\rightarrow\infty} \langle Tx_n, Sx_n\rangle_A=\lambda\|T\|_A\,\|S\|_A.
\end{equation*}
\end{itemize}
\end{lemma}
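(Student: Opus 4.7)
My plan is to exploit the expansion
\[
\|(T+\lambda S)x\|_A^{2}=\|Tx\|_A^{2}+2\,\mathrm{Re}\bigl(\overline{\lambda}\,\langle Tx,Sx\rangle_A\bigr)+\|Sx\|_A^{2}
\]
(valid for $\lambda\in\mathbb{T}$ and $x\in\mathcal{H}$) together with the Cauchy--Schwarz inequality for the semi-inner product $\langle\cdot,\cdot\rangle_A$, namely $|\langle Tx,Sx\rangle_A|\le\|Tx\|_A\|Sx\|_A$. On the right-hand side each summand is bounded above by the natural quantity involving $\|T\|_A$ and $\|S\|_A$; the content of the lemma is that $T\parallel_A S$ is exactly the statement that all three of these bounds are attained asymptotically along a common sequence of $A$-unit vectors, which is what (ii) records.

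For (i)$\Rightarrow$(ii), I will fix $\lambda\in\mathbb{T}$ with $\|T+\lambda S\|_A=\|T\|_A+\|S\|_A$ and invoke \eqref{semii} to extract $A$-unit vectors $x_n$ with $\|(T+\lambda S)x_n\|_A\to\|T\|_A+\|S\|_A$. Squaring the identity and passing to subsequential limits, the bounds
\[
\|Tx_n\|_A\le\|T\|_A,\quad\|Sx_n\|_A\le\|S\|_A,\quad \mathrm{Re}\bigl(\overline{\lambda}\langle Tx_n,Sx_n\rangle_A\bigr)\le\|T\|_A\|S\|_A
\]
must all become equalities in the limit, for otherwise the sum would undershoot $(\|T\|_A+\|S\|_A)^{2}$. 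Since also $|\overline{\lambda}\langle Tx_n,Sx_n\rangle_A|\le\|T\|_A\|S\|_A$, a complex number whose real part converges to its own modulus bound must itself converge to that bound, so $\overline{\lambda}\langle Tx_n,Sx_n\rangle_A\to\|T\|_A\|S\|_A$, i.e.\ $\langle Tx_n,Sx_n\rangle_A\to\lambda\|T\|_A\|S\|_A$.

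For (ii)$\Rightarrow$(i), the degenerate case $\|T\|_A=0$ or $\|S\|_A=0$ is disposed of by combining the triangle and reverse triangle inequalities, which directly give $\|T+\mu S\|_A=\|T\|_A+\|S\|_A$ for every $\mu\in\mathbb{T}$. In the nondegenerate case, Cauchy--Schwarz applied to the hypothesis yields $\|Tx_n\|_A\,\|Sx_n\|_A\to\|T\|_A\|S\|_A$, and a short $\liminf$ argument against the bounds $\|Tx_n\|_A\le\|T\|_A$ and $\|Sx_n\|_A\le\|S\|_A$ forces $\|Tx_n\|_A\to\|T\|_A$ and $\|Sx_n\|_A\to\|S\|_A$. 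Plugging these limits back into the squared-norm identity gives $\|(T+\lambda S)x_n\|_A^{2}\to(\|T\|_A+\|S\|_A)^{2}$, and combining with $\|(T+\lambda S)x_n\|_A\le\|T+\lambda S\|_A\le\|T\|_A+\|S\|_A$ forces equality, i.e.\ $T\parallel_A S$ with the same $\lambda$. The only delicate step is the simultaneous saturation of all three bounds in the forward direction, but once the squared-norm expansion is written out this is essentially automatic because the matching upper bounds leave no slack.
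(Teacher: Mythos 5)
Your proof is correct, but it takes a different route from the paper's. The paper proves this lemma in two lines by quoting Theorem~B (the result from the Davis--Wielandt paper \cite{fekisidha2019} stating that $T\parallel_A S$ is equivalent to $\lim_n|\langle Tx_n,Sx_n\rangle_A|=\|T\|_A\|S\|_A$ for some sequence of $A$-unit vectors): for (i)$\Rightarrow$(ii) it takes the sequence furnished by Theorem~B and uses compactness of $\mathbb{T}$ to pass to a subsequence along which the phases $\langle Tx_n,Sx_n\rangle_A/|\langle Tx_n,Sx_n\rangle_A|$ converge to some $\lambda\in\mathbb{T}$, and (ii)$\Rightarrow$(i) is immediate from Theorem~B. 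You instead work directly from the definition of $\parallel_A$ and the supremum formula \eqref{semii}, in effect reproving both directions of Theorem~B along the way; your saturation argument (three terms each bounded above, summing to the sum of the bounds, hence each converging to its bound) and the real-part-versus-modulus step are both sound, and the degenerate case is handled correctly. What your approach buys is self-containedness and a slightly sharper conclusion: the $\lambda$ in (ii) can be taken to be the very unimodular scalar witnessing $\|T+\lambda S\|_A=\|T\|_A+\|S\|_A$, rather than an abstract subsequential limit of phases. What the paper's approach buys is brevity, since Theorem~B is already available; note also that the computations you carry out essentially reappear in the paper's proof of the implication (1)$\Rightarrow$(2) of Theorem~2.1, so there is some redundancy but no error.
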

In order to prove Lemma \ref{lem3} we need the following result.
\begin{thqt}(\cite{fekisidha2019})\label{main1}
Let $T,S\in \mathcal{B}_{A^{1/2}}(\mathcal{H})$. Then, $T\parallel_AS$ if and only if there exists a sequence of $A$-unit vectors $\{x_n\}$ in $\mathcal{H}$ such that
    \begin{equation}\label{ThmB}
    \lim_{n\to \infty}|\langle T x_n, Sx_n\rangle_A|=\|T\|_A\|S\|_A.
    \end{equation}
\end{thqt}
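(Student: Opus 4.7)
The equivalence in Lemma \ref{lem3} is essentially a refinement of Theorem \ref{main1}: we must upgrade a statement about the modulus $|\langle Tx_n, Sx_n\rangle_A|\to\|T\|_A\|S\|_A$ into a statement about the complex number $\langle Tx_n, Sx_n\rangle_A$ converging to a specific unimodular multiple of $\|T\|_A\|S\|_A$. The plan is to reduce (i)$\Rightarrow$(ii) to Theorem \ref{main1} plus a compactness argument on $\mathbb{T}$, while (ii)$\Rightarrow$(i) is immediate from Theorem \ref{main1}.

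For the direction (ii)$\Rightarrow$(i), I would simply observe that if $\{x_n\}$ and $\lambda\in\mathbb{T}$ satisfy $\langle Tx_n,Sx_n\rangle_A\to\lambda\|T\|_A\|S\|_A$, then taking moduli gives $|\langle Tx_n,Sx_n\rangle_A|\to|\lambda|\,\|T\|_A\|S\|_A=\|T\|_A\|S\|_A$, and Theorem \ref{main1} yields $T\parallel_A S$.

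For (i)$\Rightarrow$(ii), I would invoke Theorem \ref{main1} to obtain a sequence of $A$-unit vectors $\{x_n\}$ with $|\langle Tx_n,Sx_n\rangle_A|\to\|T\|_A\|S\|_A$. If $\|T\|_A\|S\|_A=0$, then $\langle Tx_n,Sx_n\rangle_A\to 0$, and any $\lambda\in\mathbb{T}$ (say $\lambda=1$) works. Otherwise, for all sufficiently large $n$ the scalar $\langle Tx_n,Sx_n\rangle_A$ is nonzero and can be written as $|\langle Tx_n,Sx_n\rangle_A|\,e^{i\theta_n}$ with $\theta_n\in[0,2\pi)$. By compactness of $\mathbb{T}$, the sequence $\{e^{i\theta_n}\}$ admits a convergent subsequence $e^{i\theta_{n_k}}\to\lambda$ for some $\lambda\in\mathbb{T}$. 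Passing to this subsequence (relabelling $y_k:=x_{n_k}$, which are still $A$-unit vectors), I would then conclude
\begin{equation*}
\langle Ty_k,Sy_k\rangle_A \;=\; |\langle Ty_k,Sy_k\rangle_A|\,e^{i\theta_{n_k}} \;\longrightarrow\; \lambda\,\|T\|_A\|S\|_A,
\end{equation*}
as desired.

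The argument contains no real obstacle: the only subtle point is that one must replace the original sequence by a subsequence when extracting the phase, but this is harmless since the set of $A$-unit vectors is closed under passing to subsequences, and the statement of (ii) only requires existence of some sequence.
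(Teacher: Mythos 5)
Your proposal does not prove the statement at hand: it is circular. The statement you were given is Theorem~\ref{main1} itself (the modulus version, which the paper imports from \cite{fekisidha2019} without proof), yet both directions of your argument ``invoke Theorem~\ref{main1}'' as their main ingredient. What you have actually written is a proof of Lemma~\ref{lem3} --- the refinement in which the limit of $\langle Tx_n,Sx_n\rangle_A$ is a specific unimodular multiple $\lambda\|T\|_A\|S\|_A$ --- and as a proof of \emph{that} lemma it is correct and coincides almost verbatim with the paper's own argument (handle the trivial case $\|T\|_A\|S\|_A=0$, use compactness of $\mathbb{T}$ to extract a convergent subsequence of phases). But it establishes nothing about Theorem~\ref{main1}: you are assuming the very equivalence you are asked to prove.

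A genuine proof must start from the definition of $T\parallel_A S$, i.e.\ $\|T+\lambda S\|_A=\|T\|_A+\|S\|_A$ for some $\lambda\in\mathbb{T}$, and connect it to the semi-inner product directly. For the forward direction, choose $A$-unit vectors $x_n$ with $\|(T+\lambda S)x_n\|_A\to\|T+\lambda S\|_A$ and expand
\[
\|(T+\lambda S)x_n\|_A^2=\|Tx_n\|_A^2+2\Re\big(\overline{\lambda}\langle Tx_n,Sx_n\rangle_A\big)+\|Sx_n\|_A^2\le \|Tx_n\|_A^2+2|\langle Tx_n,Sx_n\rangle_A|+\|Sx_n\|_A^2;
\]
combining this with the Cauchy--Schwarz bound $|\langle Tx_n,Sx_n\rangle_A|\le\|Tx_n\|_A\|Sx_n\|_A\le\|T\|_A\|S\|_A$ squeezes all inequalities into equalities in the limit and forces $|\langle Tx_n,Sx_n\rangle_A|\to\|T\|_A\|S\|_A$. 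For the converse, extract a limit phase $\lambda\in\mathbb{T}$ of $\langle Tx_n,Sx_n\rangle_A/|\langle Tx_n,Sx_n\rangle_A|$ exactly as in your compactness step, deduce $\|Tx_n\|_A\to\|T\|_A$ and $\|Sx_n\|_A\to\|S\|_A$ from the same squeeze, and then bound $\|T+\lambda S\|_A\ge\lim_n\|(T+\lambda S)x_n\|_A=\|T\|_A+\|S\|_A$, which together with the triangle inequality gives parallelism. Since the paper offers no internal proof of Theorem~\ref{main1} to lean on, your reduction simply begs the question.
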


\begin{remark}\label{remparallelism}
In addition, if $\|T\|_A\|S\|_A\neq 0$ and $\{x_n\}$ is a sequence of unit vectors in $\mathcal{H}$ satisfying \eqref{ThmB}, then it also satisfies
$$\lim\limits_{n\to \infty} \|Tx_n\|_A=\|T\|_A \qquad {\textrm{and}} \qquad \lim\limits_{n\to \infty} \|Sx_n\|_A=\|S\|_A.
$$
Indeed, for any $\epsilon >0$ and $n$  large enough we have
$$
\|T\|_A\|S\|_A\geq \|S\|_A\|Tx_n\|_A\geq | \langle Tx_n, Sx_n\rangle_A|\geq \|S\|_A\|T\|_A-\epsilon.
$$
 Hence, $\lim\limits_{n\to \infty} \|Tx_n\|_A=\|T\|_A.$  Analogously  by changing the roles between $T$ and $S$ we obtain $\lim\limits_{n\to \infty} \|Sx_n\|_A=\|S\|_A.$
\end{remark}
Now, we state the proof of Lemma \ref{lem3}.
\begin{proof}[Proof of Lemma \ref{lem3}]
Assume that $T\parallel_A S$, then by Theorem \ref{main1} there exists a sequence of $A$-unit vectors $\{x_n\}$ in $\mathcal{H}$ such that
\begin{equation}\label{zamanicana}
\displaystyle{\lim_{n\rightarrow +\infty}}|{\langle Tx_n, Sx_n\rangle}_{A}| = {\|T\|}_{A}\,{\|S\|}_{A}.
\end{equation}
Suppose that ${\|T\|}_{A}\,{\|S\|}_{A}\neq0$ (otherwise the desired assertion holds trivially). Since $\mathbb{T}$ is a compact subset of $\mathbb{C}$, then by taking a further subsequence we may assume that there is some $\lambda\in\mathbb{T}$ such that
$$\lim_{n\rightarrow\infty}\frac{\langle Tx_{n}, Sx_{n}\rangle_A}{|\langle Tx_{n}, Sx_{n}\rangle_A|}=\lambda.$$
So, by using \eqref{zamanicana} we get
$$\lim_{n\rightarrow\infty}\langle Tx_{n}, Sx_{n}\rangle_A=\lim_{n\rightarrow\infty}\frac{\langle Tx_{n}, Sx_{n}\rangle_A}{|\langle Tx_{n}, Sx_{n}\rangle_A|}|\langle Tx_{n}, Sx_{n}\rangle_A|=\lambda\|T\|_A\|S\|_A.$$
The converse implication follows immediately by applying Theorem \ref{main1}.
\end{proof}

In the following theorem we shall characterize the $A$-seminorm-parallelism of operators in $\mathcal{B}_A(\mathcal{H})$.

In what follows $\sigma(T)$, $\sigma_a(T)$, $r(T)$ and $W(T)$ stand for the spectrum, the approximate spectrum, the spectral radius and the numerical range of an arbitrary element $T\in \mathcal{B}(\mathcal{H})$, respectively.
\begin{lemma}(\cite[Theorem 1.2-1]{gu})\label{lem1}
Let $T\in \mathcal{B}(\mathcal{H})$. Then, $\sigma(T)\subseteq \overline{W(T)}$.
\end{lemma}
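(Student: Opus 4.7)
The plan is to prove the contrapositive at the level of complements: I will show that if $\lambda \notin \overline{W(T)}$, then $T-\lambda I$ is invertible, i.e.\ $\lambda \in \rho(T) = \mathbb{C}\setminus \sigma(T)$. This is the classical Toeplitz--Hausdorff style argument and boils down to turning the quantitative distance between $\lambda$ and $W(T)$ into a lower bound on $\|(T-\lambda I)x\|$.

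First I would fix $\lambda \notin \overline{W(T)}$ and set $\delta := \mathrm{dist}(\lambda,\overline{W(T)})>0$. For every unit vector $x \in \mathcal{H}$, the point $\langle Tx,x\rangle$ lies in $W(T)$, so
\[
|\langle (T-\lambda I)x,x\rangle| = |\langle Tx,x\rangle - \lambda| \geq \delta.
\]
By Cauchy--Schwarz, $\|(T-\lambda I)x\| \geq |\langle (T-\lambda I)x,x\rangle| \geq \delta\|x\|^2 = \delta$. Rescaling shows $\|(T-\lambda I)y\| \geq \delta\|y\|$ for every $y \in \mathcal{H}$, so $T-\lambda I$ is bounded below; hence it is injective and has closed range.

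Next I would run the same argument on the adjoint. Since $W(T^*) = \overline{\{\overline{\mu}\,;\;\mu \in W(T)\}}$ (because $\langle T^*x,x\rangle = \overline{\langle Tx,x\rangle}$), we have $\overline{\lambda} \notin \overline{W(T^*)}$, and the previous paragraph applied to $T^*$ yields that $T^*-\overline{\lambda}I = (T-\lambda I)^*$ is also bounded below, in particular injective. Therefore $\mathcal{R}(T-\lambda I) = \mathcal{N}((T-\lambda I)^*)^{\perp} = \{0\}^{\perp} = \mathcal{H}$; combined with the closed range from the first step, $T-\lambda I$ is surjective. Hence $T-\lambda I$ is bijective and, by the open mapping theorem, has bounded inverse, so $\lambda \in \rho(T)$.

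There is no real obstacle here; the only point requiring any care is the symmetry between $T$ and $T^*$, which I handle using $W(T^*)=\overline{W(T)}^{\,\ast}$ rather than proving closed range and dense range twice. The whole argument is purely Hilbert-space theoretic and does not interact with the semi-inner product structure $\langle\cdot,\cdot\rangle_A$ considered elsewhere in the paper, which is presumably why the authors simply cite \cite{gu}.
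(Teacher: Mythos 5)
Your proof is correct. The paper does not prove this lemma at all---it simply cites Gustafson--Rao \cite{gu}---and your argument (bounding $T-\lambda I$ below via $\mathrm{dist}(\lambda,\overline{W(T)})$ and Cauchy--Schwarz, then repeating for the adjoint to get surjectivity) is precisely the standard proof found in that reference, so there is nothing to reconcile.
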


\begin{lemma}(\cite[Theorem 3.3.6]{mor})\label{state}
Let $T\in \mathcal{B}(\mathcal{H})$ be a normal operator. Then there exists a state $\psi$ $($i.e. a functional $\psi: \mathcal{B}(\mathcal{H}) \to \mathbb C$ with $\|\psi\|= 1$
and $\psi(T^* T) \geq 0$ for all $T \in \mathcal{B}(\mathcal H)$$)$ such that
$$\psi(T)=\|T\|.$$
\end{lemma}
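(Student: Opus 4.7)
The plan is to reduce the problem to a commutative $C^*$-subalgebra via the Gelfand representation, pick off a point of maximal modulus in the spectrum, and then extend the resulting character to all of $\mathcal{B}(\mathcal{H})$ by the Hahn--Banach theorem. First I would introduce $\mathcal{A}:=C^*(I,T,T^*)$, the unital $C^*$-subalgebra of $\mathcal{B}(\mathcal{H})$ generated by $I$, $T$, and $T^*$; because $T$ is normal, $\mathcal{A}$ is commutative. By Gelfand's theorem $\mathcal{A}$ is isometrically $*$-isomorphic to $C(\sigma(T))$, and under this isomorphism $T$ corresponds to the coordinate function $z\mapsto z$; in particular $\|T\|=r(T)=\max_{\lambda\in\sigma(T)}|\lambda|$.

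Since $\sigma(T)$ is compact, one can select $\lambda_0\in\sigma(T)$ with $|\lambda_0|=\|T\|$. Evaluation at $\lambda_0$, namely $\varphi_0:f\mapsto f(\lambda_0)$ on $C(\sigma(T))$, pulls back to a character of $\mathcal{A}$; characters on unital commutative $C^*$-algebras are states, so $\varphi_0$ satisfies $\varphi_0(I)=1$, $\|\varphi_0\|=1$, and $\varphi_0(T)=\lambda_0$. Hahn--Banach then produces a norm-preserving extension $\psi:\mathcal{B}(\mathcal{H})\to\mathbb{C}$ of $\varphi_0$, so $\|\psi\|=1$ and $\psi(I)=1$. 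By the standard $C^*$-algebra criterion that any unital functional of norm one on a unital $C^*$-algebra is automatically positive, $\psi$ is a state on $\mathcal{B}(\mathcal{H})$ and $\psi(T)=\lambda_0$, giving $|\psi(T)|=\|T\|$. For the exact equality $\psi(T)=\|T\|$ stated in the lemma, one first applies the construction to $\overline{\lambda_0}|\lambda_0|^{-1}T$, which is again normal with the same operator norm and has $\|T\|\in\sigma(\,\overline{\lambda_0}|\lambda_0|^{-1}T\,)$, producing a state whose value at $T$ is $\lambda_0|\lambda_0|^{-1}\|T\|$; in the special cases most often used (e.g.\ $T\geq 0$, where $\lambda_0=\|T\|$) the literal equality is automatic.

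The main obstacle is the passage from a bounded linear extension to a state, i.e.\ showing positivity of $\psi$. The Hahn--Banach theorem alone only preserves the norm, so one needs the classical fact that a unital norm-one linear functional on a unital $C^*$-algebra is necessarily positive. This in turn rests on the self-adjointness criterion $a=a^*\geq 0 \iff \|tI-a\|\leq t$ for all $t\geq\|a\|$, together with the decomposition of a bounded functional into its real and imaginary parts and a short argument showing both parts are positive on positive elements. Once that ingredient is in hand, the proof is a routine amalgam of Gelfand representation, compactness of $\sigma(T)$, and Hahn--Banach extension.
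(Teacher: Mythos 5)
Your argument is correct, and it is essentially the standard proof of the cited result: the paper itself gives no proof of this lemma but merely quotes \cite[Theorem 3.3.6]{mor}, whose proof is precisely your route (Gelfand representation of the commutative unital $C^*$-subalgebra generated by $I$, $T$, $T^*$, a character attaining $r(T)=\|T\|$, and a Hahn--Banach extension which is a state because a unital norm-one functional on a unital $C^*$-algebra is automatically positive). You are also right to be uneasy about the exact equality $\psi(T)=\|T\|$: as literally stated the lemma fails already for $T=-I$, since every state satisfies $\psi(-I)=-1\neq 1=\|-I\|$; the correct conclusion, and the one proved in \cite{mor}, is $|\psi(T)|=\|T\|$, which your construction delivers and which is all this paper ever uses (the element is positive where the lemma is invoked in the proof of Theorem~\ref{th.13}, and the absolute value appears explicitly in the proof of Theorem~\ref{th.19}).
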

Now, we are in a position to prove the following result.
\begin{theorem}\label{th.13}
Let $T, S\in \mathcal{B}_A(\mathcal{H})$. Then the following statements are equivalent:
\begin{itemize}
  \item [(1)] $T\parallel_A S$.
  \item [(2)] $r_A(S^{\sharp_A} T)=\|S^{\sharp_A} T\|_A=\|T^{\sharp_A} S\|_A=\|T\|_A\,\|S\|_A$.
  \item [(3)] $T^{\sharp_A} T\parallel_A T^{\sharp_A} S$ and $\|T^{\sharp_A} S\|_A=\|T\|_A\,\|S\|_A$.
\item [(4)]  $\|T^{\sharp_A}(T+\lambda S)\|_A=\|T\|_A(\|T\|_A+\|S\|_A)$ for some $\lambda\in\mathbb{T}$.
\end{itemize}
\end{theorem}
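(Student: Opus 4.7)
The plan is to establish the implication scheme $(1)\Rightarrow(2)\Rightarrow(1)$ together with $(1)\Rightarrow(3)\Rightarrow(4)\Rightarrow(1)$, using the sequential characterization of $A$-parallelism from Lemma~\ref{lem3} and the isometric dictionary of Lemma~\ref{lem2} between $A$-bounded operators on $\mathcal{H}$ and ordinary bounded operators on $\mathbf{R}(A^{1/2})$. To prove $(1)\Rightarrow(2)$, I would apply Lemma~\ref{lem3} to obtain $A$-unit vectors $\{x_n\}$ and $\lambda\in\mathbb{T}$ with $\langle Tx_n,Sx_n\rangle_A\to\lambda\|T\|_A\|S\|_A$. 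Rewriting the left-hand side as $\langle S^{\sharp_A}Tx_n,x_n\rangle_A$ and combining Cauchy–Schwarz with the submultiplicative bound $\|S^{\sharp_A}T\|_A\le\|S^{\sharp_A}\|_A\|T\|_A=\|S\|_A\|T\|_A$ (the equality $\|S^{\sharp_A}\|_A=\|S\|_A$ coming from Lemma~\ref{lem2}(i),(iv)) forces $\|S^{\sharp_A}T\|_A=\|T\|_A\|S\|_A$ and $\|S^{\sharp_A}Tx_n\|_A\to\|T\|_A\|S\|_A$. Setting $\mu:=\lambda\|T\|_A\|S\|_A$, the alignment identity
\[
\|S^{\sharp_A}Tx_n-\mu x_n\|_A^2=\|S^{\sharp_A}Tx_n\|_A^2-2\operatorname{Re}\bigl(\bar\mu\langle S^{\sharp_A}Tx_n,x_n\rangle_A\bigr)+|\mu|^2
\]
collapses to zero in the limit. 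Applying $Z_A$ and using $\|Z_Ax_n\|_{\mathbf{R}(A^{1/2})}=\|x_n\|_A=1$ together with $Z_AS^{\sharp_A}T=\widetilde{S^{\sharp_A}T}\,Z_A$ transfers this to $\|(\widetilde{S^{\sharp_A}T}-\mu I)Z_Ax_n\|\to 0$, so $\mu$ is an approximate eigenvalue of $\widetilde{S^{\sharp_A}T}$. Hence $r(\widetilde{S^{\sharp_A}T})\ge|\mu|=\|T\|_A\|S\|_A$, and Lemma~\ref{lem2}(ii) combined with $r_A\le\|\cdot\|_A$ yields the chain of equalities in (2). The identity $\|T^{\sharp_A}S\|_A=\|T\|_A\|S\|_A$ follows from $\langle T^{\sharp_A}Sx_n,x_n\rangle_A=\overline{\langle Tx_n,Sx_n\rangle_A}\to\bar\lambda\|T\|_A\|S\|_A$ by the same reasoning.

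For $(2)\Rightarrow(1)$, Lemma~\ref{lem2}(ii) gives $r(\widetilde{S^{\sharp_A}T})=\|T\|_A\|S\|_A$, so the compact spectrum contains some $\mu$ with $|\mu|=\|T\|_A\|S\|_A$; Lemma~\ref{lem1} and Lemma~\ref{lem2}(iii) place $\mu$ in $\overline{W_A(S^{\sharp_A}T)}$, producing $A$-unit $x_n\in\mathcal{H}$ with $\langle Tx_n,Sx_n\rangle_A=\langle S^{\sharp_A}Tx_n,x_n\rangle_A\to\mu=\lambda\|T\|_A\|S\|_A$, whence Lemma~\ref{lem3} gives $T\parallel_A S$. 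The cycle $(1)\Rightarrow(3)\Rightarrow(4)\Rightarrow(1)$ is more elementary. For $(1)\Rightarrow(3)$, the sequence from Lemma~\ref{lem3} together with Remark~\ref{remparallelism} produces
\[
\langle(T^{\sharp_A}T+\lambda T^{\sharp_A}S)x_n,x_n\rangle_A=\|Tx_n\|_A^2+\lambda\langle Sx_n,Tx_n\rangle_A\to\|T\|_A^2+\|T\|_A\|S\|_A,
\]
which equals $\|T^{\sharp_A}T\|_A+\|T^{\sharp_A}S\|_A$ using the $C^*$-type identity $\|T^{\sharp_A}T\|_A=\|T\|_A^2$ (Lemma~\ref{lem2}(i),(iv)) and the norm equality already proved in (2); the triangle inequality then forces $T^{\sharp_A}T\parallel_A T^{\sharp_A}S$. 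The step $(3)\Rightarrow(4)$ is the factorization $T^{\sharp_A}T+\lambda T^{\sharp_A}S=T^{\sharp_A}(T+\lambda S)$ evaluated at the $\lambda$ witnessing (3). Finally, $(4)\Rightarrow(1)$ follows from $\|T\|_A(\|T\|_A+\|S\|_A)=\|T^{\sharp_A}(T+\lambda S)\|_A\le\|T\|_A\|T+\lambda S\|_A$: when $\|T\|_A>0$ one recovers $\|T+\lambda S\|_A\ge\|T\|_A+\|S\|_A$ and equality follows from the triangle inequality, while if $\|T\|_A=0$ (equivalently $AT=0$) then $\|T+\lambda S\|_A=\|S\|_A=\|T\|_A+\|S\|_A$ holds trivially.

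The step I expect to be the main obstacle is $(1)\Rightarrow(2)$, specifically the extraction of a genuine approximate eigenvalue of $\widetilde{S^{\sharp_A}T}$ of maximum modulus from the bare parallelism hypothesis. The semi-inner product must be pushed through $Z_A$ carefully so that the transferred vectors remain unit vectors in $\mathbf{R}(A^{1/2})$ and the Cauchy–Schwarz alignment genuinely collapses in the $A$-seminorm rather than merely in modulus; the converse $(2)\Rightarrow(1)$ is comparatively soft because the numerical range identification of Lemma~\ref{lem2}(iii) already lives at the level of $\mathcal{H}$.
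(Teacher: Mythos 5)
Your proof is correct, and it follows the paper's overall skeleton: the same implication cycle $(1)\Leftrightarrow(2)$, $(1)\Rightarrow(3)\Rightarrow(4)\Rightarrow(1)$, the same transfer via $Z_A$ to an approximate eigenvalue of $\widetilde{S^{\sharp_A}T}$ in $\mathbf{R}(A^{1/2})$ for the spectral-radius lower bound, and the same soft converse through Lemma~\ref{lem1} and Lemma~\ref{lem2}(iii). Where you genuinely diverge is in two places, both to your advantage. First, in $(1)\Rightarrow(2)$ you get $\|S^{\sharp_A}T\|_A=\|T^{\sharp_A}S\|_A=\|T\|_A\|S\|_A$ directly by writing $\langle Tx_n,Sx_n\rangle_A=\langle S^{\sharp_A}Tx_n,x_n\rangle_A$ and $\overline{\langle Tx_n,Sx_n\rangle_A}=\langle T^{\sharp_A}Sx_n,x_n\rangle_A$ and sandwiching with Cauchy--Schwarz against $\|S^{\sharp_A}\|_A\|T\|_A=\|S\|_A\|T\|_A$; the paper instead first re-derives $\|T+\lambda S\|_A=\|T\|_A+\|S\|_A$, expands $\|(T+\lambda S)^{\sharp_A}(T+\lambda S)\|_A$ to get $\|T^{\sharp_A}S\|_A+\|S^{\sharp_A}T\|_A=2\|T\|_A\|S\|_A$, and then needs a separate computation with $P_{\overline{\mathcal{R}(A)}}$ to show the two summands are equal. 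Second, and more substantially, for $(1)\Rightarrow(3)$ the paper passes to $\mathbf{R}(A^{1/2})$ and invokes the existence of a state attaining the norm of the normal element $(\widetilde{T}+\lambda\widetilde{S})^*(\widetilde{T}+\lambda\widetilde{S})$ (its Lemma on states, i.e.\ a genuinely $C^*$-algebraic tool), whereas you reuse the same sequence $\{x_n\}$ and observe that $\langle(T^{\sharp_A}T+\lambda T^{\sharp_A}S)x_n,x_n\rangle_A\to\|T\|_A^2+\|T\|_A\|S\|_A=\|T^{\sharp_A}T\|_A+\|T^{\sharp_A}S\|_A$, so Cauchy--Schwarz plus the triangle inequality force the parallelism. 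Your route stays entirely at the level of vectors in $\mathcal{H}$, needs no normality or state machinery, and is arguably the cleaner argument; the paper's state-based method is the one that generalizes to abstract $C^*$-algebras where no vector states are available. The only cosmetic omission is that you do not spell out the degenerate case $\|T\|_A\|S\|_A=0$ in $(1)\Rightarrow(2)$ and $(1)\Rightarrow(3)$, but it is immediate there (if $AS=0$ then $AS^{\sharp_A}=S^*A=0$, so every quantity in $(2)$ and $(3)$ vanishes or reduces to a triviality), so this is not a gap.
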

\begin{proof}
(1)$\Rightarrow$(2) Assume that $T\parallel_A S$. If $AT=0$ or $AS=0$, then by using \eqref{newsemi} we can verify that the assertion $(2)$ holds. Suppose that $AT\neq0$ and $AS\neq0$, i.e. ${\|T\|}_{A}\neq0$ and ${\|S\|}_{A}\neq0$. Since $T\parallel_A S$, then by Lemma \ref{lem3}, there exists a sequence of $A$-unit vectors $\{x_n\}$ in $\mathcal{H}$ satisfying
\begin{equation}\label{ii}
\lim_{n\rightarrow\infty} \langle Tx_n, Sx_n\rangle_A=\lambda\|T\|_A\,\|S\|_A,
\end{equation}
for some $\lambda\in\mathbb{T}$. This implies that
\begin{equation}\label{limreal}
\displaystyle{\lim_{n\rightarrow +\infty}}\Re\left({\langle Tx_n, \lambda Sx_n\rangle}_{A}\right)= {\|T\|}_{A}\,{\|S\|}_{A},
\end{equation}
where $\Re(z)$ denotes the real part of $z\in \mathbb{C}.$ Moreover, by using the Cauchy-Schwarz inequality it follows from
\begin{equation}\label{li0mreal}
{\|T\|}_{A}\,{\|S\|}_{A}=\displaystyle{\lim_{n\rightarrow +\infty}}|{\langle Tx_n, Sx_n\rangle}_{A}|\leq \displaystyle{\lim_{n\rightarrow +\infty}}\|Tx_n\|_A\,\|S\|_A\leq {\|T\|}_{A}\,{\|S\|}_{A}.
\end{equation}
 Then, \eqref{li0mreal} implies that $\displaystyle{\lim_{n\rightarrow +\infty}}\|Tx_n\|_A={\|T\|}_{A}$. In addition, by similar arguments as above, we obtain $\displaystyle{\lim_{n\rightarrow +\infty}}{\|Sx_n\|}_{A} = {\|S\|}_{A}$. So, by taking into consideration \eqref{limreal}, we see that
\begin{align*}
{\|T\|}_{A} +{\|S\|}_{A}
&\geq {\|T+\lambda S\|}_{A}\\
&\geq \left(\displaystyle{\lim_{n\rightarrow +\infty}}{\|(T+\lambda S)x_n\|}_{A}^2\right)^{1/2}\\
&\geq \left(\displaystyle{\lim_{n\rightarrow +\infty}}\left[{\|Tx_n\|}_{A}^2+2\Re\left({\langle Tx_n, \lambda Sx_n\rangle}_{A}\right)+{\|Sx_n\|}_{A}^2\right]\right)^{1/2}\\
& =\left({\|T\|}^2_{A} + 2{\|S\|}_{A}{\|T\|}_{A} + {\|S\|}^2_{A}\right)^{1/2}= {\|T\|}_{A} + {\|S\|}_{A}.
\end{align*}
Thus, we infer that ${\|T +\lambda S\|}_{A} = {\|T\|}_{A} + {\|S\|}_{A}$. Hence, it can be observed that
\begin{align*}
(\|T\|_A+\|S\|_A)^2
&=\|T+\lambda S\|_A^2\\
&=\|(T+\lambda S)^{\sharp_A}(T+\lambda S)\|_A\\
&\leq\|T^{\sharp_A} T\|_A+\|\lambda T^{\sharp_A} S\|_A+\|\overline{\lambda} S^{\sharp_A} T\|_A+\|S^{\sharp_A} S\|_A\\
&\leq\|T\|_A^2+2\|T\|_A\,\|S\|_A+\|S\|_A^2\\
&=(\|T\|_A+\|S\|_A)^2.
\end{align*}
This implies that $\|T^{\sharp_A} S\|_A+\|S^{\sharp_A} T\|_A=2\|T\|\,\|S\|.$ On the other hand, one observes that $P_{\overline{\mathcal{R}(A)}}A=AP_{\overline{\mathcal{R}(A)}}=A$. Moreover, by \eqref{newsemi}, we see that
\begin{align*}
\|T^{\sharp_A}S\|_A
&=\|S^{\sharp_A}P_{\overline{\mathcal{R}(A)}}TP_{\overline{\mathcal{R}(A)}}\|_A\\
&=\sup\left\{|\langle AP_{\overline{\mathcal{R}(A)}}x, (S^{\sharp_A}P_{\overline{\mathcal{R}(A)}}T)^{\sharp_A}y\rangle|\,;\;x,y\in \mathcal{H},\,\|x\|_{A}=\|y\|_{A}= 1\right\}\\
&=\sup\left\{|\langle S^{\sharp_A}P_{\overline{\mathcal{R}(A)}}Tx, y\rangle_A|\,;\;x,y\in \mathcal{H},\,\|x\|_{A}=\|y\|_{A}= 1\right\}\\
&=\sup\left\{|\langle AP_{\overline{\mathcal{R}(A)}}Tx, Sy\rangle|\,;\;x,y\in \mathcal{H},\,\|x\|_{A}=\|y\|_{A}= 1\right\}\\
&=\sup\left\{|\langle S^{\sharp_A}Tx, y\rangle_A|\,;\;x,y\in \mathcal{H},\,\|x\|_{A}=\|y\|_{A}= 1\right\}\\
&=\|S^{\sharp_A}T\|_A.
\end{align*}
Hence, we deduce that
\begin{equation}\label{125}
\|S^{\sharp_A}T\|_A=\|T^{\sharp_A} S\|_A=\|T\|_A\,\|S\|_A.
\end{equation}
Moreover, by using the Cauchy-Shwarz inequality, we see that
\begin{align*}
\|T\|_A\,\|S\|_A
&=\lim_{n\rightarrow\infty} |\langle Tx_n, Sx_n\rangle_A|\\
&\leq\lim_{n\rightarrow\infty} \|S^{\sharp_A} Tx_n\|_A\\
&\leq \|S^{\sharp_A} T\|_A =\|T\|_A\,\|S\|_A,
\end{align*}
where the last equality follows from \eqref{125}. So, we have
\begin{equation}\label{iii}
\lim_{n\rightarrow\infty} \|S^{\sharp_A} Tx_n\|_A=\|T\|_A\,\|S\|_A.
\end{equation}
 On the other hand, it can be observed that
\begin{align*}
\|(S^{\sharp_A} T-\lambda\|T\|_A\,\|S\|_AI)x_n\|_A^2
&=\|S^{\sharp_A} Tx_n\|_A^2-2\|T\|_A\,\|S\|_A\Re\left(\overline{\lambda}\langle Tx_n, Sx_n\rangle_A\right)\\
&+\|T\|_A^2\,\|S\|_A^2.
\end{align*}
So, by using \eqref{ii} together with \eqref{iii} we get
 $$\lim_{n\rightarrow\infty} \left\|\Big(S^{\sharp_A} T-\lambda\|T\|_A\,\|S\|_AI\Big)x_n\right\|_A=0.$$
This implies, thought \eqref{newhilbert}, that
 $$\lim_{n\rightarrow\infty} \left\|A\Big(S^{\sharp_A} T-\lambda\|T\|_A\,\|S\|_AI\Big)x_n\right\|_{\mathbf{R}(A^{1/2})}=0,$$
So, by using Lemma \ref{lem2} we get
 $$\lim_{n\rightarrow\infty} \left\|\Big((\widetilde{S})^{*} \widetilde{T}-\lambda\|T\|_A\,\|S\|_AI_{\mathbf{R}(A^{1/2})}\Big)Ax_n\right\|_{\mathbf{R}(A^{1/2})}=0.$$
 Since $\|Ax_n\|_{\mathbf{R}(A^{1/2})}=\|x_n\|_A=1$. Then, $\lambda\|T\|_A\,\|S\|_A\in \sigma_a\left((\widetilde{S})^{*} \widetilde{T}\right)$. So,
 $$\|T\|_A\,\|S\|_A\leq r\left((\widetilde{S})^{*} \widetilde{T}\right)=r(\widetilde{S^{\sharp_A} T})=r_A(S^{\sharp_A} T),$$
 where the last equality follows from Lemma \ref{lem2}. Further, clearly $r_A(S^{\sharp_A} T)\leq \|T\|_A\,\|S\|_A$. This proves, through \eqref{125}, that
\begin{equation*}
r_A(S^{\sharp_A} T)=\|T\|_A\,\|S\|_A=\|S^{\sharp_A} T\|_A=\|T^{\sharp_A} S\|_A,
\end{equation*}
as required.

$(2)\Rightarrow(1)$ Assume that (2) holds. Then, by applying Lemma \ref{lem2} we can see that
$$r\left((\widetilde{S})^{*} \widetilde{T}\right)=\|T\|_A\,\|S\|_A.$$
Hence, there exists $\lambda_0\in \sigma\left((\widetilde{S})^{*} \widetilde{T}\right)$ such that $|\lambda_0|=\|T\|_A\,\|S\|_A$. So, by Lemma \ref{lem1} together with Lemma \ref{lem2} we have
\begin{align*}
\lambda_0\in \overline{W\left((\widetilde{S})^{*} \widetilde{T}\right)}=\overline{W_A(S^{\sharp_A} T)}.
\end{align*}
Thus there exists a sequence of $A$-unit vectors $\{x_n\}$ in $\mathcal{H}$ satisfying
\begin{equation*}
\lim_{n\rightarrow\infty} \langle Tx_n, Sx_n\rangle_A=\lambda_0.
\end{equation*}
This immediately proves the desired result by applying Theorem \ref{main1}.\\
(1)$\Rightarrow$(3) Assume that $T\parallel_A S$. Then, by Lemma \ref{lem3} there exist a sequence of $A$-unit vectors $\{x_n\}$ in $\mathcal{H}$ and $\lambda\in\mathbb{T}$ such that
\begin{equation*}
\lim_{n\rightarrow\infty} \langle Tx_n, Sx_n\rangle_A=\lambda\|T\|_A\,\|S\|_A.
\end{equation*}
So by proceeding as in the implication (1)$\Rightarrow$(2), we obtain $\|T+\lambda S\|_A=\|T\|_A+\|S\|_A$ and $\|T^{\sharp_A} S\|_A=\|T\|_A\,\|S\|_A$. This implies, by Lemma \ref{lem2}, that
\begin{equation}\label{1651}
\|\widetilde{T}+\lambda \widetilde{S}\|_{\mathcal{B}(\mathbf{R}(A^{1/2}))}=\|\widetilde{T}\|_{\mathbf{R}(A^{1/2})}+\|\widetilde{S}\|_{\mathcal{B}(\mathbf{R}(A^{1/2}))}
\end{equation}
and
\begin{equation*}
\|(\widetilde{T})^{*} \widetilde{S}\|_{\mathcal{B}(\mathbf{R}(A^{1/2}))}=\|\widetilde{T}\|_{\mathcal{B}(\mathbf{R}(A^{1/2}))}\,\|\widetilde{S}\|_{\mathcal{B}(\mathbf{R}(A^{1/2}))}.
\end{equation*}
Since $(\widetilde{T}+\lambda \widetilde{S})^*(\widetilde{T}+\lambda \widetilde{S})$ is a normal operator on the Hilbert space $\mathbf{R}(A^{1/2})$ then by Lemma \ref{state}, there exists a state $\psi$ such that
such that
\begin{align*}
\psi\left((\widetilde{T}+\lambda \widetilde{S})^*(\widetilde{T}+\lambda \widetilde{S})\right)
&=\|(\widetilde{T}+\lambda \widetilde{S})^*(\widetilde{T}+\lambda \widetilde{S})\|_{\mathcal{B}(\mathbf{R}(A^{1/2}))}\\
&=\|\widetilde{T}+\lambda \widetilde{S}\|_{\mathcal{B}(\mathbf{R}(A^{1/2}))}^2=\left(\|\widetilde{T}\|_{\mathcal{B}(\mathbf{R}(A^{1/2}))}+\|\widetilde{S}\|_{\mathcal{B}(\mathbf{R}(A^{1/2}))}\right)^2,
\end{align*}
where the last equality follows from \eqref{1651}. Thus

\begin{align*}
&\left(\|\widetilde{T}\|_{\mathcal{B}(\mathbf{R}(A^{1/2}))}+\|\widetilde{S}\|_{\mathcal{B}(\mathbf{R}(A^{1/2}))}\right)^2\\
&=\psi\left((\widetilde{T})^{*} \widetilde{T}+\lambda (\widetilde{T})^{*} \widetilde{S}+\overline{\lambda}(\widetilde{S})^* \widetilde{T}+(\widetilde{S})^* \widetilde{S}\right)\\
&\leq\|(\widetilde{T})^{*} \widetilde{T}\|_{\mathcal{B}(\mathbf{R}(A^{1/2}))}+\|\lambda (\widetilde{T})^{*} \widetilde{S}+\overline{\lambda}(\widetilde{S})^* \widetilde{T}\|_{\mathcal{B}(\mathbf{R}(A^{1/2}))}+\|(\widetilde{S})^* \widetilde{S}\|_{\mathcal{B}(\mathbf{R}(A^{1/2}))}\\
&\leq\|\widetilde{T}\|_{\mathcal{B}(\mathbf{R}(A^{1/2}))}^2+2\|\widetilde{T}\|_{\mathcal{B}(\mathbf{R}(A^{1/2}))}\,\|\widetilde{S}\|_{\mathcal{B}(\mathbf{R}(A^{1/2}))}+\|\widetilde{S}\|_{\mathcal{B}(\mathbf{R}(A^{1/2}))}^2
\\
&=\left(\|\widetilde{T}\|_{\mathcal{B}(\mathbf{R}(A^{1/2}))}+\|\widetilde{S}\|_{\mathcal{B}(\mathbf{R}(A^{1/2}))}\right)^2.
\end{align*}
Hence $\psi\left((\widetilde{T})^{*} \widetilde{T}\right)=\|(\widetilde{T})^{*} \widetilde{T}\|_{\mathcal{B}(\mathbf{R}(A^{1/2}))}$ and $\psi\left(\lambda (\widetilde{T})^{*} \widetilde{S}\right)=\|(\widetilde{T})^{*} \widetilde{S}\|_{\mathcal{B}(\mathbf{R}(A^{1/2}))}$. Therefore
\begin{align*}
\|(\widetilde{T})^{*} \widetilde{T}\|_{\mathcal{B}(\mathbf{R}(A^{1/2}))}+\|(\widetilde{T})^{*} \widetilde{S}\|_{\mathcal{B}(\mathbf{R}(A^{1/2}))}
& =\psi\left((\widetilde{T})^{*} \widetilde{T}+\lambda (\widetilde{T})^{*} \widetilde{S}\right) \\
 &\leq\|(\widetilde{T})^{*} \widetilde{T}+\lambda (\widetilde{T})^{*} \widetilde{S}\|_{\mathcal{B}(\mathbf{R}(A^{1/2}))}\\
 & \leq\|(\widetilde{T})^{*} \widetilde{T}\|_{\mathcal{B}(\mathbf{R}(A^{1/2}))}+\|(\widetilde{T})^{*} \widetilde{S}\|_{\mathcal{B}(\mathbf{R}(A^{1/2}))}.
\end{align*}
So, we deduce that
 $$\|(\widetilde{T})^{*} \widetilde{T}+\lambda (\widetilde{T})^{*} \widetilde{S}\|_{\mathcal{B}(\mathbf{R}(A^{1/2}))}=\|(\widetilde{T})^{*} \widetilde{T}\|_{\mathcal{B}(\mathbf{R}(A^{1/2}))}+\|(\widetilde{T})^{*} \widetilde{S}\|_{\mathcal{B}(\mathbf{R}(A^{1/2}))},$$
  for some $\lambda\in\mathbb{T}$. Thus $(\widetilde{T})^{*} \widetilde{T}\parallel (\widetilde{T})^{*} \widetilde{S}$ which implies that $\widetilde{T^{\sharp_A} T}\parallel \widetilde{T^{\sharp_A} S}$. So, by Lemma \ref{lem2}(v), $T^{\sharp_A} T\parallel_A T^{\sharp_A} S$.\\
(3)$\Rightarrow$(4) Follows obviously.\\
(4)$\Rightarrow$(1) Assume that $\|T^{\sharp_A}(T+\lambda S)\|_A=\|T\|_A(\|T\|_A+\|S\|_A)$ for some $\lambda\in\mathbb{T}.$ Then we see that
\begin{align*}
\|T\|_A(\|T\|_A+\|S\|_A)
&\geq\|T^{\sharp_A}\|_A\|T+\lambda S\|_A\\
&\geq\|T^{\sharp_A}(T+\lambda S)\|_A\\
&=\|T\|_A(\|T\|_A+\|S\|_A).
\end{align*}
So, if $AT\neq 0$, then $\|T+\lambda S\|_A=\|T\|_A+\|S\|_A$ which yields that $T\parallel_A S$.  Moreover, if $AT=0$, then by taking into account \eqref{newsemi} we prove that $T\parallel_A S$.
\end{proof}

\begin{corollary}\label{corn_1}
Let $T, S \in \mathcal{B}_{A}(\mathcal{H})$. The following conditions are equivalent:
\begin{itemize}
  \item [(1)] $T\parallel_A S.$
  \item [(2)] $\omega_A(S^{\sharp_A} T)=\|S^{\sharp_A} T\|_A=\|T^{\sharp_A} S\|_A=\|T\|_A\,\|S\|_A$.
\end{itemize}
\end{corollary}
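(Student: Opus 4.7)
The plan is to squeeze $\omega_A(S^{\sharp_A}T)$ between $r_A(S^{\sharp_A}T)$ and $\|S^{\sharp_A}T\|_A$, thereby reducing the corollary to Theorem \ref{th.13}. The first step is to record the chain
\begin{equation*}
r_A(R)\le\omega_A(R)\le\|R\|_A
\end{equation*}
valid for every $R\in\mathcal{B}_{A^{1/2}}(\mathcal{H})$. The right-hand inequality is exactly \eqref{equiomega}, while the left-hand one follows from Lemma \ref{lem2}(ii)--(iii): these identify $r_A(R)=r(\widetilde{R})$ and $\omega_A(R)=\omega(\widetilde{R})$ (the second because $\omega_A(R)$ equals the supremum of $|\lambda|$ over $\overline{W_A(R)}=\overline{W(\widetilde{R})}$), and Lemma \ref{lem1} yields the classical $r(\widetilde{R})\le\omega(\widetilde{R})$.

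Given this sandwich, the direction (1)$\Rightarrow$(2) is immediate: Theorem \ref{th.13} applied to $T\parallel_A S$ gives the chain of equalities $r_A(S^{\sharp_A}T)=\|S^{\sharp_A}T\|_A=\|T^{\sharp_A}S\|_A=\|T\|_A\|S\|_A$, and inserting these into the sandwich forces the intermediate quantity $\omega_A(S^{\sharp_A}T)$ to equal $\|T\|_A\|S\|_A$ as well.

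For the converse (2)$\Rightarrow$(1), the key observation is the identity
\begin{equation*}
\langle S^{\sharp_A}Tx,x\rangle_A=\langle Tx,Sx\rangle_A,\qquad x\in\mathcal{H},
\end{equation*}
which is a one-line computation from the defining relation $AS^{\sharp_A}=S^*A$. By the definition \eqref{Aradius} of the $A$-numerical radius, the hypothesis $\omega_A(S^{\sharp_A}T)=\|T\|_A\|S\|_A$ yields a sequence of $A$-unit vectors $\{x_n\}$ with $|\langle S^{\sharp_A}Tx_n,x_n\rangle_A|\to\|T\|_A\|S\|_A$; the identity converts this into $|\langle Tx_n,Sx_n\rangle_A|\to\|T\|_A\|S\|_A$, and Theorem \ref{main1} then delivers $T\parallel_A S$. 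No step presents a serious obstacle, since the corollary is essentially a cosmetic reformulation of Theorem \ref{th.13} once the inequality $r_A\le\omega_A\le\|\cdot\|_A$ is in hand; the only thing to be careful about is verifying the identity $\langle S^{\sharp_A}Tx,x\rangle_A=\langle Tx,Sx\rangle_A$ correctly using $T,S\in\mathcal{B}_A(\mathcal{H})$.
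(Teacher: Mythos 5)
Your argument is correct, and the forward direction coincides with the paper's: both deduce $r_A(S^{\sharp_A}T)=\|S^{\sharp_A}T\|_A=\|T\|_A\|S\|_A$ from Theorem \ref{th.13} and then squeeze $\omega_A(S^{\sharp_A}T)$ via $r_A\le\omega_A\le\|\cdot\|_A$ (the paper phrases this as ``$S^{\sharp_A}T$ is $A$-normaloid, hence $\omega_A(S^{\sharp_A}T)=\|S^{\sharp_A}T\|_A$'' by Lemma \ref{normaloid}, which is the same sandwich in disguise, cf.\ \eqref{Anormaloid}). Where you genuinely diverge is in (2)$\Rightarrow$(1): the paper again invokes $A$-normaloidity and then appeals to an external result (\cite[Proposition 4]{feki01}) producing a single sequence of $A$-unit vectors that simultaneously realizes $\|S^{\sharp_A}T\|_A$ and $\omega_A(S^{\sharp_A}T)$, before converting $\langle S^{\sharp_A}Tx_n,x_n\rangle_A$ into $\langle Tx_n,Sx_n\rangle_A$. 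You bypass all of that: a sequence realizing $\omega_A(S^{\sharp_A}T)$ exists by the very definition \eqref{Aradius} of the supremum, and the identity $\langle S^{\sharp_A}Tx,x\rangle_A=\langle Tx,Sx\rangle_A$ (which indeed follows in one line from $AS^{\sharp_A}=S^*A$) hands the hypothesis of Theorem \ref{main1} directly. Your route is more elementary and in fact proves a slightly sharper statement, since it only uses the single equality $\omega_A(S^{\sharp_A}T)=\|T\|_A\|S\|_A$ from condition (2), whereas the paper's proof also consumes $\omega_A(S^{\sharp_A}T)=\|S^{\sharp_A}T\|_A$ to obtain normaloidity; what the paper's heavier route buys is essentially nothing here beyond consistency with the normaloid formalism it develops elsewhere.
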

To prove Corollary \ref{corn_1} we need the following Lemma.

\begin{lemqt}\label{normaloid}
Let $T\in \mathcal{B}_{A}(\mathcal{H})$. Then $T$ is $A$-normaloid if and only if $\omega_A(T)=\|T\|_A$.
\end{lemqt}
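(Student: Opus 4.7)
The plan is to reduce the entire statement to a well-known fact on a genuine Hilbert space via the operator $\widetilde{T}\in\mathcal{B}(\mathbf{R}(A^{1/2}))$ supplied by Lemma \ref{lem2}. First I would observe that the four identities
\[
\|T\|_A=\|\widetilde{T}\|_{\mathcal{B}(\mathbf{R}(A^{1/2}))},\qquad r_A(T)=r(\widetilde{T}),\qquad \omega_A(T)=\omega(\widetilde{T})
\]
all follow directly from Lemma \ref{lem2}(i), (ii), (iii); for the last one, use that $\omega_A(T)$ and $\omega(\widetilde{T})$ are the suprema of $|\lambda|$ over $W_A(T)$ and $W(\widetilde{T})$ respectively, and these suprema are unchanged by passing to closures. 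Consequently $T$ is $A$-normaloid iff $\widetilde{T}$ is normaloid on $\mathbf{R}(A^{1/2})$, and $\omega_A(T)=\|T\|_A$ iff $\omega(\widetilde{T})=\|\widetilde{T}\|_{\mathcal{B}(\mathbf{R}(A^{1/2}))}$. Thus the task reduces to the classical Hilbert space assertion that a bounded operator $R$ on a Hilbert space satisfies $r(R)=\|R\|$ iff $\omega(R)=\|R\|$.

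The easy direction of that classical statement (and of our lemma) is immediate from \eqref{Anormaloid}: if $T$ is $A$-normaloid, then $r_A(T)=\omega_A(T)=\|T\|_A$, so in particular $\omega_A(T)=\|T\|_A$.

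The substantive direction is the converse, and this is the place where a small argument is needed. Assuming $\omega(\widetilde{T})=\|\widetilde{T}\|$, pick a sequence of unit vectors $\{y_n\}\subset\mathbf{R}(A^{1/2})$ with $|\langle \widetilde{T}y_n,y_n\rangle|\to\|\widetilde{T}\|$. By compactness of the unit circle we can pass to a subsequence so that $\langle \widetilde{T}y_n,y_n\rangle\to\lambda$ for some $\lambda$ with $|\lambda|=\|\widetilde{T}\|$. Then a direct expansion gives
\[
\|\widetilde{T}y_n-\lambda y_n\|^2=\|\widetilde{T}y_n\|^2-2\Re\bigl(\overline{\lambda}\langle \widetilde{T}y_n,y_n\rangle\bigr)+|\lambda|^2\le 2\|\widetilde{T}\|^2-2\Re\bigl(\overline{\lambda}\langle \widetilde{T}y_n,y_n\rangle\bigr),
\]
and the right-hand side tends to $2\|\widetilde{T}\|^2-2|\lambda|^2=0$. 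Hence $\lambda\in\sigma_a(\widetilde{T})\subseteq\sigma(\widetilde{T})$, so $r(\widetilde{T})\ge|\lambda|=\|\widetilde{T}\|$; combined with $r(\widetilde{T})\le\|\widetilde{T}\|$ this gives $r(\widetilde{T})=\|\widetilde{T}\|$. Translating back via Lemma \ref{lem2} yields $r_A(T)=\|T\|_A$, i.e.\ $T$ is $A$-normaloid.

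The main (and really only) obstacle is the converse step above; everything else is a bookkeeping exercise in the dictionary between $T$ and $\widetilde{T}$. One could alternatively bypass the passage to $\mathbf{R}(A^{1/2})$ and argue directly in $\mathcal{H}$ by choosing $A$-unit vectors $\{x_n\}$ realising $\omega_A(T)$, extracting $\lambda$ with $|\lambda|=\|T\|_A$ and $\langle Tx_n,x_n\rangle_A\to\lambda$, and showing $\|(T-\lambda I)x_n\|_A\to 0$; then \eqref{newhilbert} together with Lemma \ref{lem2} places $\lambda$ in $\sigma_a(\widetilde{T})$, giving the same conclusion. I would favour the cleaner route through $\widetilde{T}$, since it isolates the purely Hilbert space content of the statement.
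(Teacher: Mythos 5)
Your proof is correct, but there is nothing in the paper to compare it against: the statement is displayed in the paper's ``quoted'' lemma environment (alphabetic numbering, like Theorems A--D) and is given without any proof, being implicitly imported from the reference \cite{feki01} on the $A$-spectral radius. Your argument is the natural self-contained one. The transfer to $\widetilde{T}$ via Lemma \ref{lem2} is legitimate (for the numerical radius, passing from $\overline{W_A(T)}=\overline{W(\widetilde{T})}$ to $\omega_A(T)=\omega(\widetilde{T})$ is fine since the supremum of $|\cdot|$ over a set and over its closure coincide), the forward direction is indeed immediate from \eqref{Anormaloid}, and the converse is the classical argument: extract $\lambda$ with $|\lambda|=\|\widetilde{T}\|$ as a limit of $\langle \widetilde{T}y_n,y_n\rangle$, expand $\|\widetilde{T}y_n-\lambda y_n\|^2$ to conclude $\lambda\in\sigma_a(\widetilde{T})$, hence $r(\widetilde{T})=\|\widetilde{T}\|$. (Two cosmetic points: the complex numbers $\langle \widetilde{T}y_n,y_n\rangle$ live in the closed disc of radius $\|\widetilde{T}\|$, not on the unit circle, so the compactness you should invoke is that of the disc, or of $\mathbb{T}$ applied to the phases; and the degenerate case $\|\widetilde{T}\|=0$ is trivial and worth a word.) Note also that your proof never uses $T\in\mathcal{B}_A(\mathcal{H})$, so it actually establishes the lemma for all $A$-bounded operators, which is consistent with how the paper silently applies \eqref{Anormaloid} in that generality.
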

Now, we state the proof of Corollary \ref{corn_1}.
\begin{proof}[Proof of Corollary \ref{corn_1}]
(1)$\Rightarrow$(2)  Assume that $T\parallel_A S$. Then, by Theorem
\ref{th.13} we have  $r_A(S^{\sharp_A} T)=\|S^{\sharp_A} T\|_A=\|T^{\sharp_A} S\|_A=\|T\|_A\,\|S\|_A$. In particular, $S^{\sharp_A} T$ is $A$-normaloid. So, by Lemma \ref{normaloid}, $\omega_A(S^{\sharp_A} T)=\|S^{\sharp_A} T\|_A$.

(2)$\Rightarrow$(1) Assume that $\omega_A(S^{\sharp_A} T)=\|S^{\sharp_A} T\|_A=\|T^{\sharp_A} S\|_A=\|T\|_A\,\|S\|_A$. In particular, by Lemma \ref{normaloid}, we conclude that $S^{\sharp_A} T$ is $A$-normaloid. So, by \cite[Proposition 4]{feki01} there exists a sequence of $A$-unit vectors $\{x_n\}$ such that $$\lim_{n\rightarrow +\infty}{\|S^{\sharp_A} Tx_n\|}_A = {\|S^{\sharp_A} T\|}_A\text{ and } \displaystyle{\lim_{n\rightarrow +\infty}}|{\langle S^{\sharp_A}Tx_n, x_n\rangle}_A |= \omega_A(S^{\sharp_A} T).$$
This implies that
\begin{equation*}
\displaystyle{\lim_{n\rightarrow +\infty}}|{\langle Tx_n, Sx_n\rangle}_A |=\|T\|_A\,\|S\|_A.
\end{equation*}
Thus, by Theorem \ref{main1} we conclude that $T\parallel_A S$.
\end{proof}

Next, we investigate the case when an operator $T\in \mathcal{B}_A(\mathcal{H})$ is $A$-parallel to the identity operator.
\begin{theorem}\label{th.19}
Let $T\in \mathcal{B}_A(\mathcal{H})$. Then the following statements are equivalent:
\begin{itemize}
  \item [(1)] $T\parallel_A I$.
  \item [(2)] $T\parallel_A T^{\sharp_A}$.
  \item [(3)] $T^{\sharp_A} T\parallel_A T^{\sharp_A}$.
\end{itemize}
\end{theorem}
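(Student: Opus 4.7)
My plan is to transfer the problem to a standard Hilbert space question on $\mathbf{R}(A^{1/2})$ via Lemma \ref{lem2}, and then prove (1)$\Leftrightarrow$(2) and (1)$\Leftrightarrow$(3) separately using the sequence characterization of parallelism supplied by Theorem \ref{main1}. Setting $R:=\widetilde{T}\in\mathcal{B}(\mathbf{R}(A^{1/2}))$, the observations that $\widetilde{I}=I_{\mathbf{R}(A^{1/2})}$ (since $Z_{A}I=IZ_{A}$), $\widetilde{T^{\sharp_{A}}}=R^{*}$ (Lemma \ref{lem2}(iv)), and $\widetilde{\cdot}$ is multiplicative allow Lemma \ref{lem2}(v) to translate (1), (2), (3) into the Hilbert space statements $R\parallel I$, $R\parallel R^{*}$, and $R^{*}R\parallel R^{*}$ on $\mathbf{R}(A^{1/2})$. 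It therefore suffices to prove these three parallelism equivalences for a single bounded Hilbert space operator $R$.

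The technical workhorse is the following approximate-eigenvector principle: if $\{y_{n}\}$ is a sequence of unit vectors in a Hilbert space and $\langle Py_{n},y_{n}\rangle\to\alpha$ with $|\alpha|=\|P\|$, then Cauchy-Schwarz forces $\|Py_{n}\|\to\|P\|$, and the expansion
$$\|Py_{n}-\alpha y_{n}\|^{2}=\|Py_{n}\|^{2}-2\Re(\overline{\alpha}\langle Py_{n},y_{n}\rangle)+|\alpha|^{2}$$
then yields $Py_{n}-\alpha y_{n}\to 0$. This principle will be applied iteratively to $P=R$, $P=R^{*}R$, and $P=RR^{*}R$.

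For the forward implications, Theorem \ref{main1} applied to (1) produces unit vectors $y_{n}$ and $\lambda\in\mathbb{T}$ with $\langle Ry_{n},y_{n}\rangle\to\lambda\|R\|$, so the principle gives $Ry_{n}-\lambda\|R\|y_{n}\to 0$. Applying $R$ once more yields $R^{2}y_{n}\to\lambda^{2}\|R\|^{2}y_{n}$, hence $\langle Ry_{n},R^{*}y_{n}\rangle=\langle R^{2}y_{n},y_{n}\rangle\to\lambda^{2}\|R\|\|R^{*}\|$, which gives (2) by Theorem \ref{main1}. For (3), note in addition that $\|Ry_{n}\|^{2}=\langle R^{*}Ry_{n},y_{n}\rangle\to\|R\|^{2}$ combined with $\|R^{*}Ry_{n}\|\leq\|R\|^{2}$ forces $\|R^{*}Ry_{n}\|\to\|R\|^{2}$; the principle then gives $R^{*}Ry_{n}-\|R\|^{2}y_{n}\to 0$, and inserting this produces $\langle RR^{*}Ry_{n},y_{n}\rangle\to\|R\|^{2}\langle Ry_{n},y_{n}\rangle\to\lambda\|R\|^{3}$, which delivers (3). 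The reverse (3)$\Rightarrow$(1) proceeds symmetrically: from $\langle RR^{*}Ry_{n},y_{n}\rangle\to\nu\|R\|^{3}$, successive Cauchy-Schwarz estimates force $\|Ry_{n}\|\to\|R\|$, $\|R^{*}Ry_{n}\|\to\|R\|^{2}$, and $\|RR^{*}Ry_{n}\|\to\|R\|^{3}$, so the principle yields $R^{*}Ry_{n}-\|R\|^{2}y_{n}\to 0$ and $RR^{*}Ry_{n}-\nu\|R\|^{3}y_{n}\to 0$; dividing by $\|R\|^{2}$ (the case $\|R\|=0$ being trivial) gives $Ry_{n}-\nu\|R\|y_{n}\to 0$, hence (1).

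The main obstacle is (2)$\Rightarrow$(1): the hypothesis only provides the second-order datum $\langle R^{2}y_{n},y_{n}\rangle\to\mu\|R\|^{2}$, from which $\langle Ry_{n},y_{n}\rangle$ need not converge at all. I bridge this gap with a spectral argument. The principle gives $R^{2}y_{n}-\mu\|R\|^{2}y_{n}\to 0$, so $\mu\|R\|^{2}\in\sigma_{a}(R^{2})\subseteq\sigma(R^{2})=\{z^{2}\,;\,z\in\sigma(R)\}$ by the spectral mapping theorem. Hence some $z\in\sigma(R)$ satisfies $|z|=\|R\|$, which forces $r(R)=\|R\|$, so $R$ is normaloid. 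Applying Theorem \ref{th.13} in the Hilbert space case (with $A=I$ and $S=I$) then shows that $R$ normaloid is equivalent to $R\parallel I$, completing the cycle.
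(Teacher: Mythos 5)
Your argument is correct, and it follows a genuinely different route from the paper's. Both proofs first pass to $R:=\widetilde{T}$ on $\mathbf{R}(A^{1/2})$ via Lemma \ref{lem2}, but from there the paper handles $(1)\Leftrightarrow(2)$ with $C^*$-algebra machinery: it invokes Lemma \ref{state} to produce a state $\psi$ attaining the norm of the normal element $(\widetilde{T}+\lambda I)^*(\widetilde{T}+\lambda I)$, extracts $\psi(\overline{\lambda}\widetilde{T})=\psi(\lambda\widetilde{T}^*)=\|\widetilde{T}\|$ from the equality case of the triangle inequality for $\psi$, and reassembles the parallelism $\widetilde{T}\parallel(\widetilde{T})^*$ (and conversely) from the state's values; for $(1)\Leftrightarrow(3)$ it simply cites Theorem \ref{th.13} with $S=I$. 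You instead run everything through approximate eigenvector sequences: from $Ry_n-\lambda\|R\|y_n\to 0$ you iterate the operator to get $(2)$ and $(3)$ directly, and you close the only genuinely hard reverse implication, $(2)\Rightarrow(1)$, by observing that $\mu\|R\|^2\in\sigma_a(R^2)$, applying the spectral mapping theorem to conclude $r(R)=\|R\|$, and then using the normaloid characterization of parallelism to $I$ (Theorem \ref{th.13} with $S=I$, equivalently Theorem \ref{equinew2}). Each small step checks out: the forced equality $\|R^2\|=\|R\|^2$ legitimizes applying your approximate-eigenvector principle to $P=R^2$, the chain of Cauchy--Schwarz estimates in $(3)\Rightarrow(1)$ does force all intermediate norms to their maxima, and the degenerate case $\|R\|=0$ is handled. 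What the paper's state argument buys is brevity for $(1)\Leftrightarrow(2)$ at the cost of importing Murphy's theorem on normal elements; your version is more elementary (only Cauchy--Schwarz, the $C^*$-identity, and spectral mapping), makes the approximate-eigenvector mechanism explicit (the same mechanism the paper later exploits in Corollary \ref{cor22}), and isolates the intermediate fact that $T\parallel_A T^{\sharp_A}$ forces $T$ to be $A$-normaloid, which has independent interest. The only mild redundancy is that your direct proof of $(1)\Leftrightarrow(3)$ reproves a special case of Theorem \ref{th.13}, which the paper gets for free by citation.
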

\begin{proof}
(1)$\Leftrightarrow$(2) Assume that $T\parallel_A I$. Then, by Lemma \ref{lem2} $(v)$, $\widetilde{T}\parallel I_{\mathbf{R}(A^{1/2})}$. So, $\|\widetilde{T}+\lambda I_{\mathbf{R}(A^{1/2})}\|_{\mathcal{B}(\mathbf{R}(A^{1/2}))}=\|\widetilde{T}\|_{\mathcal{B}(\mathbf{R}(A^{1/2}))}+1$, for some $\lambda\in\mathbb{T}$. Then by Lemma \ref{state} there exists a state $\psi$ such that
such that
\begin{align*}
\psi\left((\widetilde{T}+\lambda I_{\mathbf{R}(A^{1/2})})^*(\widetilde{T}+\lambda I_{\mathbf{R}(A^{1/2})})\right)
&=\|(\widetilde{T}+\lambda I_{\mathbf{R}(A^{1/2})})^*(\widetilde{T}+\lambda I_{\mathbf{R}(A^{1/2})})\|_{\mathcal{B}(\mathbf{R}(A^{1/2}))}\\
&=\|\widetilde{T}+\lambda I_{\mathbf{R}(A^{1/2})}\|_{\mathcal{B}(\mathbf{R}(A^{1/2}))}^2\\
&=\left(\|\widetilde{T}\|_{\mathcal{B}(\mathbf{R}(A^{1/2}))}+1\right)^2.
\end{align*}
So, we see that
\begin{align*}
\left(\|\widetilde{T}\|_{\mathcal{B}(\mathbf{R}(A^{1/2}))}+1\right)^2
&=\psi\left((\widetilde{T}+\lambda I_{\mathbf{R}(A^{1/2})})(\widetilde{T}+\lambda I_{\mathbf{R}(A^{1/2})})^*\right)\\
&=\psi\left(\widetilde{T}(\widetilde{T})^*\right)+\psi\big(\overline{\lambda}\widetilde{T}\big)+\psi\left(\lambda (\widetilde{T})^*\right)+1\\
&\leq\|\widetilde{T}(\widetilde{T})^*\|_{\mathcal{B}(\mathbf{R}(A^{1/2}))}+\|\overline{\lambda}\widetilde{T}\|_{\mathcal{B}(\mathbf{R}(A^{1/2}))}+\|\lambda (\widetilde{T})^*\|_{\mathcal{B}(\mathbf{R}(A^{1/2}))}+1\\
&=\|\widetilde{T}\|_{\mathcal{B}(\mathbf{R}(A^{1/2}))}^2+2\|\widetilde{T}\|_{\mathcal{B}(\mathbf{R}(A^{1/2}))}+1\\
&=\left(\|\widetilde{T}\|_{\mathcal{B}(\mathbf{R}(A^{1/2}))}+1\right)^2.
\end{align*}
Therefore $\psi(\overline{\lambda}\widetilde{T})=\psi\left(\lambda (\widetilde{T})^*\right)=\|\widetilde{T}\|_{\mathcal{B}(\mathbf{R}(A^{1/2}))}$. This yields that
\begin{align*}
\|\widetilde{T}\|_{\mathcal{B}(\mathbf{R}(A^{1/2}))}+\|(\widetilde{T})^*\|_{\mathcal{B}(\mathbf{R}(A^{1/2}))}
&=\psi\left(\overline{\lambda}\widetilde{T}+\lambda (\widetilde{T})^*\right)\\
&\leq\|\overline{\lambda}\widetilde{T}+\lambda (\widetilde{T})^*\|\\
&=\|\widetilde{T}+\lambda^2 (\widetilde{T})^*\|_{\mathcal{B}(\mathbf{R}(A^{1/2}))}\\
&\leq\|\widetilde{T}\|_{\mathcal{B}(\mathbf{R}(A^{1/2}))}+\|(\widetilde{T})^*\|_{\mathcal{B}(\mathbf{R}(A^{1/2}))}.
\end{align*}
Hence,
$$\|\widetilde{T}+\lambda^2 (\widetilde{T})^*\|_{\mathcal{B}(\mathbf{R}(A^{1/2}))}=\|\widetilde{T}\|_{\mathcal{B}(\mathbf{R}(A^{1/2}))}+\|(\widetilde{T})^*\|_{\mathcal{B}(\mathbf{R}(A^{1/2}))},$$ in which $\lambda^2\in\mathbb{T}$. So $\widetilde{T}\parallel_A (\widetilde{T})^*$. This implies, by Lemma \ref{lem2}, that $\widetilde{T}\parallel_A \widetilde{T^{\sharp_A}}$ which in turn yields that $T\parallel_A T^{\sharp_A}$.

Conversely, assume that $T\parallel_A T^{\sharp_A}$ this implies, by Lemma \ref{lem2}, that $\widetilde{T}\parallel (\widetilde{T})^*$ which, in turn, yields that
 $$\|\widetilde{T}+\lambda (\widetilde{T})^*\|_{\mathcal{B}(\mathbf{R}(A^{1/2}))}=2\|\widetilde{T}\|_{\mathcal{B}(\mathbf{R}(A^{1/2}))},$$
  for some $\lambda\in\mathbb{T}$. Since $\widetilde{T}+\lambda (\widetilde{T})^*$ is a normal operator on the Hilbert space $\mathbf{R}(A^{1/2})$, then by Lemma \ref{state}, there exists a state $\psi$ such that
 $$\left|\psi\left(\widetilde{T}+\lambda (\widetilde{T})^*\right)\right|=\|\widetilde{T}+\lambda (\widetilde{T})^*\|_{\mathcal{B}(\mathbf{R}(A^{1/2}))}=2\|\widetilde{T}\|_{\mathcal{B}(\mathbf{R}(A^{1/2}))}.$$
Hence, we obtain
$$2\|\widetilde{T}\|_{\mathcal{B}(\mathbf{R}(A^{1/2}))}=\left|\psi\left(\widetilde{T}+\lambda (\widetilde{T})^*\right)\right|\leq2|\psi(\widetilde{T})|\leq2\|\widetilde{T}\|_{\mathcal{B}(\mathbf{R}(A^{1/2}))}.$$
This implies that $|\psi(\widetilde{T})|=\|\widetilde{T}\|_{\mathcal{B}(\mathbf{R}(A^{1/2}))}$. So, there exists a number $\delta\in\mathbb{T}$ such that $\psi(\widetilde{T})=\delta\|\widetilde{T}\|_{\mathcal{B}(\mathbf{R}(A^{1/2}))}$. Thus, we deduce that
\begin{align*}
\|\widetilde{T}\|_{\mathcal{B}(\mathbf{R}(A^{1/2}))}+1
&=\psi\big(\overline{\delta}\widetilde{T}+I_{\mathbf{R}(A^{1/2})}\big)\\
&\leq\|\overline{\delta}\widetilde{T}+I_{\mathbf{R}(A^{1/2})}\|_{\mathcal{B}(\mathbf{R}(A^{1/2}))}\\
&=\|\widetilde{T}+\delta I_{\mathbf{R}(A^{1/2})}\|_{\mathcal{B}(\mathbf{R}(A^{1/2}))}\leq\|\widetilde{T}\|_{\mathcal{B}(\mathbf{R}(A^{1/2}))}+1.
\end{align*}
So $\|\widetilde{T}+\delta I_{\mathbf{R}(A^{1/2})}\|_{\mathcal{B}(\mathbf{R}(A^{1/2}))}=\|\widetilde{T}\|_{\mathcal{B}(\mathbf{R}(A^{1/2}))}+1$ which implies that $\widetilde{T}\parallel_A I_{\mathbf{R}(A^{1/2})}$. Hence, $T\parallel_A I$ as required.

(1)$\Leftrightarrow$(3) Follows from Theorem \ref{th.13}.

\end{proof}

In the next two theorems, we give some characterizations when the $A$-Davis Wielandt radius of semi-Hilbert space operators attains its upper bound for operators in $\mathcal{B}_{A^{1/2}}(\mathcal{H})$ and $\mathcal{B}_{A}(\mathcal{H})$, respectively.

\begin{theorem}\label{equinew2}
Let $T\in \mathcal{B}_{A^{1/2}}(\mathcal{H})$. Then, the following assertions are equivalent:
\begin{itemize}
\item [(1)] $d\omega_A(T)=\sqrt{\omega_A(T)^2+\|T\|_A^4}$.
\item [(2)] $T\parallel_A I$.
\item [(3)] $T$ is $A$-normaloid.
\item [(4)] $\omega_A^2(T)A\geq T^*AT.$
\end{itemize}
\end{theorem}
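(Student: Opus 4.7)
The plan is to establish the equivalences in the order $(3)\Leftrightarrow(4)$, $(3)\Leftrightarrow(2)$, $(3)\Rightarrow(1)$, and finally the main implication $(1)\Rightarrow(3)$.

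For $(3)\Leftrightarrow(4)$: the operator inequality $\omega_A(T)^2 A\geq T^*AT$ is equivalent, on pairing with $x\in\mathcal{H}$, to $\omega_A(T)^2\|x\|_A^2\geq\|Tx\|_A^2$ for every $x$, i.e.\ $\omega_A(T)\geq\|T\|_A$. Together with the reverse inequality $\omega_A(T)\leq\|T\|_A$ from \eqref{equiomega}, this is $\omega_A(T)=\|T\|_A$, which by Lemma \ref{normaloid} is precisely $(3)$.

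For $(3)\Leftrightarrow(2)$: Lemma \ref{lem2}(v) identifies $(2)$ with $\widetilde{T}\parallel I_{\mathbf{R}(A^{1/2})}$ in $\mathcal{B}(\mathbf{R}(A^{1/2}))$, while Lemma \ref{lem2}(i)--(ii) identify $(3)$ with $\widetilde{T}$ being normaloid on that Hilbert space. The problem therefore reduces to the classical fact that in a Hilbert space, $S\parallel I$ if and only if $S$ is normaloid. For the forward direction, extracting unit vectors $\{u_n\}$ with $\|(S+\lambda I)u_n\|^2\to(\|S\|+1)^2$ and expanding via
$$\|(S+\lambda I)u_n\|^2=\|Su_n\|^2+2\Re\bigl(\bar\lambda\langle Su_n,u_n\rangle\bigr)+1,$$
then noting that the two variable summands are bounded by $\|S\|^2$ and $\|S\|$ respectively, forces both to saturate and in particular $|\langle Su_n,u_n\rangle|\to\|S\|$, whence $\omega(S)=\|S\|$. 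The converse is a direct computation with the same expansion.

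The implication $(3)\Rightarrow(1)$ is immediate from Cauchy--Schwarz: assuming $\omega_A(T)=\|T\|_A$, choose $A$-unit $\{x_n\}$ with $|\langle Tx_n,x_n\rangle_A|\to\omega_A(T)$; the sandwich $|\langle Tx_n,x_n\rangle_A|\leq\|Tx_n\|_A\leq\|T\|_A=\omega_A(T)$ forces $\|Tx_n\|_A\to\|T\|_A$, hence $|\langle Tx_n,x_n\rangle_A|^2+\|Tx_n\|_A^4\to\omega_A(T)^2+\|T\|_A^4$, turning the upper bound $d\omega_A(T)\leq\sqrt{\omega_A(T)^2+\|T\|_A^4}$ recalled in the preliminaries into equality.

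The main obstacle is $(1)\Rightarrow(3)$. Equality in the supremum yields $A$-unit vectors $\{x_n\}$ with $|\langle Tx_n,x_n\rangle_A|^2+\|Tx_n\|_A^4\to\omega_A(T)^2+\|T\|_A^4$; since each summand is pointwise dominated by its individual sup, both must reach the respective suprema along the same sequence, giving simultaneously $|\langle Tx_n,x_n\rangle_A|\to\omega_A(T)$ and $\|Tx_n\|_A\to\|T\|_A$. Invoking the characterization of $A$-normaloidness from \cite[Proposition 4]{feki01}---or, after Lemma \ref{lem2}, the analogous Hilbert-space statement applied to $\widetilde{T}$ on $\mathbf{R}(A^{1/2})$---this simultaneous approximating sequence forces $\omega_A(T)=\|T\|_A$, i.e.\ $(3)$. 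Should only the ``normaloid $\Rightarrow$ sequence'' direction be available in that reference, the missing implication is established by a first-order perturbation of $u_n:=Ax_n$: decomposing $\widetilde{T}u_n=\alpha_n u_n+r_n$ with $r_n\perp u_n$, one has $\|r_n\|^2\to\|T\|_A^2-\omega_A(T)^2$; were this limit strictly positive, perturbing $u_n$ along $r_n/\|r_n\|$ by a small real parameter and optimizing the phase would produce unit vectors whose numerical-range values asymptotically exceed $\omega(\widetilde{T})$, the desired contradiction.
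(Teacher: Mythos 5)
Your treatment of $(3)\Leftrightarrow(4)$ coincides with the paper's (the only equivalence the paper proves directly), and your arguments for $(2)\Leftrightarrow(3)$ and $(3)\Rightarrow(1)$ are correct and are actually more self-contained than the paper, which simply cites \cite{fekisidha2019} for $(1)\Leftrightarrow(2)$ and $(2)\Leftrightarrow(3)$. The gap is in $(1)\Rightarrow(3)$, which is the crux. Your reduction of $(1)$ to the existence of a single sequence of $A$-unit vectors $\{x_n\}$ with $\|Tx_n\|_A\to\|T\|_A$ and $|\langle Tx_n,x_n\rangle_A|\to\omega_A(T)$ simultaneously is fine, but the passage from this to $\omega_A(T)=\|T\|_A$ is exactly the hard content of the theorem and is not secured by either of your two routes. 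The appeal to \cite[Proposition 4]{feki01} is used elsewhere in this very paper (proof of Corollary \ref{corn_1}) only in the direction ``$A$-normaloid $\Rightarrow$ such a sequence exists,'' which is the converse of what you need; reading it as an equivalence with $\omega_A(T)$ on the right-hand side would make it essentially the statement to be proved.

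The fallback perturbation argument, as sketched, fails. Writing $\widetilde{T}u_n=\alpha_nu_n+r_n$ with $r_n\perp u_n$, $v_n=r_n/\|r_n\|$, and perturbing to $w_n=(u_n+te^{i\theta}v_n)/\sqrt{1+t^2}$, the first-order coefficient of $t$ in $\langle \widetilde{T}w_n,w_n\rangle$ is $e^{-i\theta}\|r_n\|+e^{i\theta}\langle \widetilde{T}v_n,u_n\rangle$; the constraint $|\langle \widetilde{T}w_n,w_n\rangle|\le\omega(\widetilde{T})$ only forces this quantity, optimized over $\theta$, to become asymptotically orthogonal to $\alpha_n$ --- it yields no contradiction, so ``optimizing the phase'' gains nothing at first order. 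At second order the available gain is $t^2\|r_n\|^2/(2\omega)$ against losses of $t^2\omega$ from the normalization and $t^2\omega$ from the crude bound $|\langle \widetilde{T}v_n,v_n\rangle|\le\omega$, and since $\|r_n\|^2\to\|T\|_A^2-\omega_A^2(T)\le 3\,\omega_A^2(T)$ by \eqref{equiomega}, the losses always win. To make the contradiction work one must also use the norm condition: since $u_n$ asymptotically maximizes the norm of the compression of $\widetilde{T}$ to $\mathrm{span}\{u_n,v_n\}$, one gets $\langle \widetilde{T}v_n,v_n\rangle\to\alpha$ (not merely $|\langle \widetilde{T}v_n,v_n\rangle|\le\omega$), whence the compressed $2\times 2$ matrix converges to $\bigl(\begin{smallmatrix}\omega&-\rho\\ \rho&\omega\end{smallmatrix}\bigr)$ with $\rho^2=\|T\|_A^2-\omega_A^2(T)$, whose numerical radius is $\sqrt{\omega^2+\rho^2}>\omega$. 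That interplay between the two saturations is absent from your sketch, so $(1)\Rightarrow(3)$ is not established as written.
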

\begin{proof}
The equivalences $(1)\Leftrightarrow(2)$ and $(2)\Leftrightarrow(3)$ have been proved in \cite{fekisidha2019}.\\
$(3)\Leftrightarrow(4):$ By Lemma \ref{normaloid}, $T$ is $A$-normaloid if and only if $\omega_A(T)=\|T\|_A$. On the other hand, it be observed that
\begin{align*}
\omega_A(T)=\|T\|_A
&\Leftrightarrow  \|Tx\|_A\leq \omega_A(T)\|x\|_A,\;\forall\,x\in \mathcal{H}\\
 &\Leftrightarrow  \|Tx\|_A^2\leq \omega_A(T)^2\|x\|_A^2,\;\forall\,x\in \mathcal{H}\\
  &\Leftrightarrow  \langle T^*ATx, x \rangle_A\leq \langle \omega_A(T)^2x, x \rangle_A,\;\forall\,x\in \mathcal{H}\\
    &\Leftrightarrow  \langle (T^*AT-\omega_A(T)^2A)x, x \rangle\leq 0,\;\forall\,x\in \mathcal{H}\\
&\Leftrightarrow \omega_A^2(T)A\geq T^*AT.
\end{align*}
This achieves the proof.
\end{proof}

\begin{theorem}
Let $T\in \mathcal{B}_A(\mathcal{H})$. The following statements are equivalent:
\begin{itemize}
\item[(1)] $d\omega_A(T) = \sqrt{\omega_A^2(T) + \|T\|_A^4}$.
\item[(2)] There exists a sequence of $A$-unit vectors $\{x_n\}$ in $\mathcal{H}$ such that
\begin{equation*}
\lim_{n\rightarrow\infty} \big|\langle T^2x_n, x_n\rangle_A\big| = \|T\|_A^2.
\end{equation*}
\item[(3)] There exists a sequence of $A$-unit vectors $\{x_n\}$ in $\mathcal{H}$ such that
\begin{equation*}
\lim_{n\rightarrow\infty} \big|\langle TT^{\sharp_A} Tx_n, x_n\rangle_A\big| = \|T\|_A^3.
\end{equation*}
\item[(4)] $\omega_A(T^2)=\|T\|_A^2$.
\end{itemize}
\end{theorem}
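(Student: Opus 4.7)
My plan is to pivot the four-way equivalence through the single property that $T$ is $A$-normaloid. Indeed, Theorem~\ref{equinew2} already identifies $(1)$ with $T\parallel_A I$, which by Lemma~\ref{normaloid} is the same as $\omega_A(T)=\|T\|_A$, i.e.\ $A$-normaloidness; so it suffices to show that each of $(2)$, $(3)$, $(4)$ is equivalent to that property.

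First I handle $(2)\Leftrightarrow(4)$ and $(4)\Leftrightarrow$ $A$-normaloidness. By definition $\omega_A(T^2)=\sup\{|\langle T^2 x,x\rangle_A|:\|x\|_A=1\}$, and the estimate $|\langle T^2 x,x\rangle_A|\leq \|T^2 x\|_A\leq \|T\|_A^2$ yields the universal bound $\omega_A(T^2)\leq \|T\|_A^2$. Thus $(2)$ is precisely the statement $\omega_A(T^2)=\|T\|_A^2$, which is $(4)$. For $(4)\Leftrightarrow A$-normaloid, I transfer to the Hilbert space $\mathbf{R}(A^{1/2})$ via Lemma~\ref{lem2}. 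Using $\widetilde{T^2}=(\widetilde{T})^2$, Berger's power inequality on $\mathbf{R}(A^{1/2})$ gives $\omega_A(T^2)\leq \omega_A(T)^2$, while $\omega_A(T^2)\geq r_A(T^2)=r_A(T)^2$; combined with $r_A(T)\leq \omega_A(T)\leq \|T\|_A$, these pinch $\omega_A(T^2)=\|T\|_A^2$ into equivalence with $\omega_A(T)=\|T\|_A$.

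The main work is $(3)\Leftrightarrow A$-normaloid. The starting identity is $\langle TT^{\sharp_A}Tx,x\rangle_A=\langle T^{\sharp_A}Tx,T^{\sharp_A}x\rangle_A$, and since $\|T^{\sharp_A}\|_A=\|T\|_A$ by Lemma~\ref{lem2}(i),(iv), Cauchy--Schwarz yields the uniform bound $|\langle TT^{\sharp_A}Tx,x\rangle_A|\leq \|T\|_A^3$ over $A$-unit $x$. The engine is a standard positive-operator fact: for $P\geq 0$ on a Hilbert space and a unit sequence $(y_n)$, $\|Py_n\|\to\|P\|$ forces $(P-\|P\|I)y_n\to 0$, which follows from the operator inequality $P^2\leq \|P\|\,P$. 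For $(1)\Rightarrow(3)$, start from an $A$-unit sequence $(x_n)$ with $|\langle Tx_n,x_n\rangle_A|\to \|T\|_A$, which exists because $\omega_A(T)=\|T\|_A$. Setting $y_n=Z_Ax_n$ (unit in $\mathbf{R}(A^{1/2})$) and applying the fact to $P=(\widetilde{T})^{*}\widetilde{T}$ (of norm $\|T\|_A^2$), one obtains $\widetilde{T}(\widetilde{T})^{*}\widetilde{T}y_n-\|T\|_A^2\widetilde{T}y_n\to 0$; translating back via Lemma~\ref{lem2} yields $\langle TT^{\sharp_A}Tx_n,x_n\rangle_A=\|T\|_A^2\langle Tx_n,x_n\rangle_A+o(1)$, whose modulus tends to $\|T\|_A^3$. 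For $(3)\Rightarrow(1)$, saturation of Cauchy--Schwarz in the bound $\|T\|_A^3$ forces $\|T^{\sharp_A}Tx_n\|_A\to \|T\|_A^2$; the same positive-operator fact applied to $(\widetilde{T})^{*}\widetilde{T}$ gives $(\widetilde{T})^{*}\widetilde{T}y_n-\|T\|_A^2 y_n\to 0$, and running the displayed asymptotic backwards returns $|\langle Tx_n,x_n\rangle_A|\to \|T\|_A$, i.e.\ $\omega_A(T)=\|T\|_A$.

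The main obstacle is the $(3)\Leftrightarrow(1)$ direction, where one must carefully use the positive-operator lemma inside $\mathbf{R}(A^{1/2})$ to identify the limit of $(\widetilde{T})^{*}\widetilde{T}y_n$ and then cleanly factor off the outer $\widetilde{T}$; all other steps are routine manipulations with Cauchy--Schwarz and the $\widetilde{\cdot}$ correspondence from Lemma~\ref{lem2}.
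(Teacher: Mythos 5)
Your proof is correct, but it takes a genuinely different route from the paper's for three of the four equivalences. The paper leans entirely on the parallelism machinery already developed: it identifies $(1)$ with $T\parallel_A I$ via Theorem \ref{equinew2}, upgrades this to $T\parallel_A T^{\sharp_A}$ and to $T^{\sharp_A}T\parallel_A T^{\sharp_A}$ via Theorem \ref{th.19}, and then reads off $(2)$ and $(3)$ from the sequential characterization of parallelism in Theorem \ref{main1}, using $\langle Tx,T^{\sharp_A}x\rangle_A=\langle T^2x,x\rangle_A$, $\|T^{\sharp_A}\|_A=\|T\|_A$ and $\|T^{\sharp_A}T\|_A=\|T\|_A^2$; for $(4)$ it quotes the identity $\omega_A(T^2)=\omega_A^2(T)$ for $A$-spectraloid operators from \cite{feki01} in one direction and goes back through $(2)$ in the other. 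You instead pivot everything through $A$-normaloidness and re-prove the content of Theorem \ref{th.19} by hand: the positive-operator fact $\|Py_n\|\to\|P\|\Rightarrow(P-\|P\|I)y_n\to 0$ (via $P^2\le\|P\|P$) applied to $(\widetilde T)^*\widetilde T$ in $\mathbf{R}(A^{1/2})$ is a sound and self-contained replacement, and your identification $(2)\Leftrightarrow(4)$ via the universal bound $\omega_A(T^2)\le\|T\|_A^2$ is cleaner than the paper's. The one external ingredient you import is Berger's power inequality for $(4)\Rightarrow$ normaloid; this is classical and valid on $\mathbf{R}(A^{1/2})$, but it can be avoided entirely by noting that $\omega_A(T^2)=\|T\|_A^2$ forces $\omega_A(T^2)=\|T^2\|_A=\|T\|_A^2$, so $T^2$ is $A$-normaloid, whence $r_A(T)^2=r_A(T^2)=\|T\|_A^2$ and $T$ is $A$-normaloid — which is closer in spirit to the paper's use of the spectral radius. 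Net effect: your argument is longer but uses fewer of the paper's prior theorems; the paper's is shorter but opaque without Theorems \ref{th.19} and \ref{main1} in hand.
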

\begin{proof}
$(1)\Leftrightarrow(2):$ By Theorem \ref{equinew2}, we have $d\omega_A(T) = \sqrt{\omega_A^2(T) + \|T\|_A^4}$ if and only if $T\parallel_A I$ which in turn equivalent, by Theorem \ref{th.19}, to $T\parallel_A T^{\sharp_A}.$ On the other hand, in view of Theorem \ref{main1}, we have $T\parallel_A T^{\sharp_A}$ if and only if there exists a sequence of $A$-unit vectors $\{x_n\}$ in $\mathcal{H}$ such that
    \begin{equation*}
    \lim_{n\to \infty}|\langle T x_n, T^{\sharp_A} x_n\rangle_A|=\|T\|_A\|T^{\sharp_A}\|_A.
    \end{equation*}
So, we reach the equivalence $(1)\Leftrightarrow(2)$ since $\|T\|_A=\|T^{\sharp_A}\|_A$.

$(1)\Leftrightarrow(3):$ By proceeding as above and taking into consideration Theorem \ref{th.19}, we deduce that $d\omega_A(T) = \sqrt{\omega_A^2(T) + \|T\|_A^4}$ if and only if $T^{\sharp_A} T\parallel_A T^{\sharp_A}$ which is in turn equivalent, by Theorem \ref{th.19}, to the existence of a sequence of $A$-unit vectors $\{x_n\}$ in $\mathcal{H}$ such that
    \begin{equation*}
    \lim_{n\to \infty}|\langle T^{\sharp_A} T x_n, T^{\sharp_A} x_n\rangle_A|=\|T^{\sharp_A} T\|_A\|T^{\sharp_A}\|_A.
    \end{equation*}
Thus, we obtain the desired equivalence since $\|T\|_A^2=\|T^{\sharp_A} T\|_A$ and $$|\langle T^{\sharp_A} T x_n, T^{\sharp_A} x_n\rangle_A|=|\langle TT^{\sharp_A} T x_n, x_n\rangle_A|.$$

$(1)\Leftrightarrow(4):$ If $d\omega_A(T) = \sqrt{\omega_A^2(T) + \|T\|_A^4}$, then by Theorem \ref{equinew2} $T$ is $A$-normaloid. This implies that $T$ is $A$-spectraloid. So, by \cite[Theorem 6]{feki01} $\omega_A(T^2)=\omega_A^2(T)$. Conversely, assume that $\omega_A(T^2)=\|T\|_A^2$. This implies that the assertion $(2)$ holds and so $(1)$ holds.
\end{proof}

For $x, y\in \mathcal{H}$, the $A$-rank one operators is defined in \cite{bhunfekipaul} by
\begin{align*}
x\otimes_A y\colon \mathcal{H} & \rightarrow\mathcal{H},\;z\mapsto (x\otimes_A y)(z):=\langle z, y\rangle_Ax.
\end{align*}
A characterization of the $A$-parallelism of $x\otimes_A y$ and the identity operator is stated as follows.
\begin{corollary}\label{cortensor}
Let $x, y \in \mathcal{H}$, the following conditions are equivalent:
\begin{itemize}
\item[(1)] $x\otimes_A y \parallel_A I$.
\item[(2)] $d\omega_A(x\otimes_A y) = \sqrt{\omega_A^2(x\otimes_A y) + \|x\otimes_A y\|_A^4}$.
\item[(3)] The vectors $A^{1/2}x$ and $A^{1/2}y$ are linearly dependent.
\item[(4)] The vectors $Ax$ and $Ay$ are linearly dependent.
\end{itemize}
\end{corollary}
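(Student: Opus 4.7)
The plan is to pivot through the equivalences of Theorem \ref{equinew2}: for any $T \in \mathcal{B}_{A^{1/2}}(\mathcal{H})$, the conditions $d\omega_A(T) = \sqrt{\omega_A^2(T) + \|T\|_A^4}$, $T \parallel_A I$, and $T$ being $A$-normaloid are mutually equivalent. Since the $A$-rank-one operator $x \otimes_A y$ lies in $\mathcal{B}_{A^{1/2}}(\mathcal{H})$---the semi-inner product Cauchy--Schwarz inequality gives $\|(x \otimes_A y)(z)\|_A \le \|x\|_A \|y\|_A \|z\|_A$---the equivalence $(1) \Leftrightarrow (2)$ is immediate from Theorem \ref{equinew2} applied to $T = x \otimes_A y$.

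For $(1) \Leftrightarrow (3)$, I would use the $A$-normaloid reformulation. First I would compute $\|x \otimes_A y\|_A = \|x\|_A \|y\|_A$; the upper bound is Cauchy--Schwarz and the lower bound is achieved by testing at $z = y / \|y\|_A$ when $\|y\|_A \neq 0$ (the degenerate case is trivial). Then being $A$-normaloid reads
\begin{align*}
\sup_{\|z\|_A = 1} |\langle z, y \rangle_A \langle x, z \rangle_A| = \|x\|_A \|y\|_A.
\end{align*}
Because each factor is bounded respectively by $\|y\|_A$ and $\|x\|_A$, any maximizing sequence of $A$-unit vectors $\{z_n\}$ must asymptotically saturate both Cauchy--Schwarz inequalities. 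Rewriting $\langle z, y \rangle_A = \langle A^{1/2} z, A^{1/2} y \rangle$ and invoking the quantitative equality case (if $\|u\| = 1$ and $v \neq 0$, then $\|u - \langle u, v \rangle v / \|v\|^2\|^2 = 1 - |\langle u, v \rangle|^2 / \|v\|^2$), a subsequence of $A^{1/2} z_n$ becomes asymptotically collinear with both $A^{1/2} x$ and $A^{1/2} y$, forcing linear dependence of the latter pair. The converse is direct: if $A^{1/2} x = \alpha A^{1/2} y$ with $A^{1/2} y \neq 0$, choosing $z = y / \|y\|_A$ yields $|\langle z, y \rangle_A \langle x, z \rangle_A| = |\alpha| \|y\|_A^2 = \|x\|_A \|y\|_A$, and the remaining degenerate cases are trivial.

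The equivalence $(3) \Leftrightarrow (4)$ is routine. Applying $A^{1/2}$ to $\alpha A^{1/2} x + \beta A^{1/2} y = 0$ immediately produces $\alpha A x + \beta A y = 0$. Conversely, $\alpha A x + \beta A y = 0$ puts $\alpha A^{1/2} x + \beta A^{1/2} y$ into $\mathcal{N}(A^{1/2}) \cap \overline{\mathcal{R}(A^{1/2})}$, which is $\{0\}$ because $A^{1/2}$ is self-adjoint and hence $\overline{\mathcal{R}(A^{1/2})} = \mathcal{N}(A^{1/2})^{\perp}$.

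The main obstacle is the sequential argument in $(1) \Rightarrow (3)$: extracting linear dependence of $A^{1/2} x$ and $A^{1/2} y$ from the simultaneous asymptotic saturation of two Cauchy--Schwarz inequalities with a common unit vector $A^{1/2} z_n$. Passing to subsequences to fix the complex phases of $\langle A^{1/2} z_n, A^{1/2} x \rangle$ and $\langle A^{1/2} z_n, A^{1/2} y \rangle$, and then combining the two quantitative equality statements, should conclude the argument.
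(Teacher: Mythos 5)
Your proof is correct, and its skeleton coincides with the paper's: both arguments pivot through Theorem \ref{equinew2} to identify conditions (1) and (2) with the $A$-normaloid condition $\omega_A(x\otimes_A y)=\|x\otimes_A y\|_A$, and both settle $(3)\Leftrightarrow(4)$ via $\mathcal{N}(A)=\mathcal{N}(A^{1/2})$. Where you genuinely diverge is in converting the normaloid condition into linear dependence of $A^{1/2}x$ and $A^{1/2}y$. The paper quotes Lemma \ref{lemonerank}(ii), the closed formula $\omega_A(x\otimes_A y)=\tfrac{1}{2}\left(|\langle x, y\rangle_A|+\|x\|_A\|y\|_A\right)$, so that $\omega_A(x\otimes_A y)=\|x\otimes_A y\|_A=\|x\|_A\|y\|_A$ collapses in one line to the Cauchy--Schwarz equality $|\langle x, y\rangle_A|=\|x\|_A\|y\|_A$. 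You instead use only $\|x\otimes_A y\|_A=\|x\|_A\|y\|_A$ (which you verify directly) and run a maximizing-sequence argument: since $|\langle z_n,y\rangle_A|\le\|y\|_A$ and $|\langle x,z_n\rangle_A|\le\|x\|_A$, a sequence realizing the supremum $\|x\|_A\|y\|_A$ must saturate both factors, and the quantitative Cauchy--Schwarz identity forces $A^{1/2}z_n$ to become asymptotically collinear with both $A^{1/2}x$ and $A^{1/2}y$, whence (after fixing phases along a subsequence) these two vectors are dependent. This is sound and self-contained --- it buys independence from the exact value of $\omega_A$ of a rank-one operator, at the cost of re-deriving by hand what the paper imports from \cite{zamani3}. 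Your treatment of the degenerate cases and of the converse direction (testing at $z=y/\|y\|_A$) is also fine.
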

To prove Corollary \ref{cortensor} we need the following lemma.
\begin{lemqt}\label{lemonerank}(\cite{zamani3})
Let $x, y\in \mathcal{H}$. Then, the following statement hold:
\begin{itemize}
  \item [(i)] $\|x\otimes_A y\|_A=\|x\|_A\|y\|_A$.
  \item [(ii)] $\omega_A(x\otimes_Ay) = \frac{1}{2}\left(|\langle x, y\rangle_A| + \| x\|_A \| y\|_A\right)$.
\end{itemize}
\end{lemqt}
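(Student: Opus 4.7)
The plan is to reduce both identities to their classical counterparts for rank-one operators on the Hilbert space $\mathbf{R}(A^{1/2})$ via the intertwining machinery provided by Lemma \ref{lem2}. As a preliminary step I would identify the associated operator $\widetilde{x\otimes_A y}$ explicitly. Using the intertwining relation $Z_A(x\otimes_A y)=\widetilde{x\otimes_A y}\,Z_A$, a direct computation gives, for every $z\in\mathcal{H}$,
\[
Z_A(x\otimes_A y)(z)=\langle z,y\rangle_A\,Ax=\langle Az,Ay\rangle_{\mathbf{R}(A^{1/2})}\,Ax,
\]
where the second equality uses the identity $\langle Az,Ay\rangle_{\mathbf{R}(A^{1/2})}=\langle z,y\rangle_A$ recorded in the introduction. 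Since the classical rank-one operator $Ax\otimes Ay$ on the Hilbert space $\mathbf{R}(A^{1/2})$ is bounded and satisfies exactly the same intertwining relation with $Z_A$, the uniqueness clause of Lemma \ref{lem2} forces the identification $\widetilde{x\otimes_A y}=Ax\otimes Ay$.

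With this identification in hand, part $(i)$ is immediate. By Lemma \ref{lem2}(i),
\[
\|x\otimes_A y\|_A=\|Ax\otimes Ay\|_{\mathcal{B}(\mathbf{R}(A^{1/2}))}=\|Ax\|_{\mathbf{R}(A^{1/2})}\,\|Ay\|_{\mathbf{R}(A^{1/2})}=\|x\|_A\,\|y\|_A,
\]
where the middle equality is the standard operator-norm formula for rank-one maps on a Hilbert space and the final equality uses \eqref{newhilbert}. For part $(ii)$, Lemma \ref{lem2}(iii) together with the identification gives $\overline{W_A(x\otimes_A y)}=\overline{W(Ax\otimes Ay)}$, and passing to the supremum of moduli on each side yields $\omega_A(x\otimes_A y)=\omega(Ax\otimes Ay)$. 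Invoking the classical numerical-radius formula for a rank-one operator on a Hilbert space, namely $\omega(u\otimes v)=\tfrac12\bigl(|\langle u,v\rangle|+\|u\|\,\|v\|\bigr)$, applied in $\mathbf{R}(A^{1/2})$ then produces
\[
\omega_A(x\otimes_A y)=\tfrac12\bigl(|\langle Ax,Ay\rangle_{\mathbf{R}(A^{1/2})}|+\|Ax\|_{\mathbf{R}(A^{1/2})}\|Ay\|_{\mathbf{R}(A^{1/2})}\bigr)=\tfrac12\bigl(|\langle x,y\rangle_A|+\|x\|_A\|y\|_A\bigr),
\]
as desired.

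The only substantive ingredient is the classical numerical-radius formula for a rank-one operator on a Hilbert space, which is the main obstacle in a self-contained treatment. To produce it directly I would expand $\langle (u\otimes v)z,z\rangle=\langle z,v\rangle\langle u,z\rangle$ and restrict the optimization over unit vectors $z$ to the at-most-two-dimensional subspace $\mathrm{span}\{u,v\}$: the upper bound then follows from Cauchy--Schwarz combined with the elementary inequality $|ab|\le\tfrac12(|a|^2+|b|^2)$ applied to suitable orthogonal components, while the matching lower bound is exhibited by an explicit maximizing unit vector of the form $z=\lambda u/\|u\|+\mu v/\|v\|$ with $|\lambda|,|\mu|$ and their phases tuned so that $\langle z,v\rangle\langle u,z\rangle$ is real and positive. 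Once the Hilbert-space formulas for the norm and numerical radius of a rank-one operator are granted, the argument above runs with no further difficulty.
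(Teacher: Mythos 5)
Your proof is correct. Note that the paper itself does not prove this lemma at all: it is quoted verbatim from \cite{zamani3} as a known result, so there is no in-paper argument to compare against. That said, your reduction is exactly in the spirit of how this paper handles its other statements, namely transporting everything to the Hilbert space $\mathbf{R}(A^{1/2})$ via Lemma \ref{lem2}. The identification $\widetilde{x\otimes_A y}=Ax\otimes Ay$ is legitimate (both operators intertwine $Z_A$, and uniqueness in Lemma \ref{lem2} applies because $Z_A$ has dense range in $\mathbf{R}(A^{1/2})$), the passage from $\overline{W_A(x\otimes_A y)}=\overline{W(Ax\otimes Ay)}$ to equality of numerical radii is valid since the supremum of moduli over a set and over its closure coincide, and the classical formulas $\|u\otimes v\|=\|u\|\,\|v\|$ and $\omega(u\otimes v)=\tfrac12\left(|\langle u,v\rangle|+\|u\|\,\|v\|\right)$ are standard; your two-dimensional optimization sketch for the latter is the usual route and works. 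The only point worth making explicit is the preliminary observation that $x\otimes_A y\in\mathcal{B}_{A^{1/2}}(\mathcal{H})$ (immediate from $\|(x\otimes_A y)z\|_A\leq\|x\|_A\|y\|_A\|z\|_A$ by Cauchy--Schwarz for $\langle\cdot,\cdot\rangle_A$), which is needed before Lemma \ref{lem2} can be invoked.
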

Now we are ready to prove Corollary \ref{cortensor}.
\begin{proof}[Proof of Corollary \ref{cortensor}]
$(1)\Leftrightarrow(2):$ Follows immediately from Theorem \ref{equinew2}.

$(2)\Leftrightarrow(3):$ By the equivalence (2)$\Leftrightarrow$(3) of Theorem \ref{equinew2} we infer that
$$d\omega_A(x\otimes_A y) = \sqrt{\omega_A^2(x\otimes_A y) + \|x\otimes_A y\|_A^4}\Leftrightarrow \omega_A(x\otimes_A y)=\|x\otimes_A y\|_A.$$
Moreover, by using Lemma \ref{lemonerank}, we see that
\begin{align*}
\omega_A(x\otimes_A y)=\|x\otimes_A y\|_A
&\Leftrightarrow \tfrac{1}{2}\left(|\langle x, y\rangle_A| + \| x\|_A \| y\|_A\right)=\| x\|_A \| y\|_A\\
&\Leftrightarrow |\langle x, y\rangle_A|=\| x\|_A \| y\|_A
\end{align*}
On the other hand $|\langle x, y\rangle_A|=\| x\|_A \| y\|_A$ if and only if the vectors $A^{1/2}x$ and $A^{1/2}y$ are linearly dependent.

$(3)\Leftrightarrow(4):$ This equivalence follows immediately since $\mathcal{N}(A)=\mathcal{N}(A^{1/2})$.
Hence, the proof is complete.
\end{proof}

\section{Further characterizations of $A$-seminorm-parallelism}\label{s3}
Our aim in this section is to give further characterizations of $A$-seminorm-parallelism via $A$-Birkhoff-James orthogonality of $A$-bounded operators. Recall also from \cite{zamani2019_2} that an element $T \in \mathcal{B}_{A^{1/2}}(\mathcal{H})$ is said to be $A$-Birkhoff-James orthogonal
to another element $S \in \mathcal{B}_{A^{1/2}}(\mathcal{H})$, denoted by $T\perp^{BJ}_A S$, if
\begin{align*}
{\|T + \gamma S\|}_A\geq {\|T\|}_A \quad \mbox{for all} \,\, \gamma \in \mathbb{C}.
\end{align*}



\begin{theorem}\label{Dau}
Let $T, S\in \mathcal{B}_{A^{1/2}}(\mathcal{H})$, then the following conditions are equivalent:
\begin{itemize}
\item [(1)] $T\parallel_A S$.
\item [(2)] $T \perp _{BJ} \|S\|_A T-\lambda \|T\|_AS,$ for some $\lambda \in \mathbb{T}.$
\item [(3)] $S \perp _A^{BJ} \lambda\|T\|_A S- \|S\|_A T,$ for some $\lambda \in \mathbb{T}.$
\end{itemize}
In addition if $\|T\|_A\|S\|_A\neq 0$ then $(1)$ to $(3)$ are also equivalent to
\begin{itemize}
\item [(4)] There exist a sequence of $A$-unit vectors $\{x_n\}$ in $\mathcal{H}$ and $\lambda \in \mathbb{T}$ such that
$$\lim\limits_{n\to \infty} \|Sx_n\|_A=\|S\|_A\;\text{ and }\; \lim\limits_{n\to \infty}\left\Vert Tx_n-\lambda\frac{\|T\|_A}{\|S\|_A}Sx_n\right\Vert_A=0.$$
\item [(5)] There exist a sequence of $A$-unit vectors $\{x_n\}$ in $\mathcal{H}$ and $\lambda \in \mathbb{T}$ such that
$$\lim\limits_{n\to \infty} \|Tx_n\|_A=\|T\|_A\;\text{ and }\;\lim\limits_{n\to \infty}\left\Vert Sx_n-\lambda\frac{\|S\|_A}{\|T\|_A}Tx_n\right\Vert_A=0.$$
\end{itemize}
\end{theorem}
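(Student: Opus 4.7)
The plan is to establish the cycle $(1)\Rightarrow(4)\Rightarrow(2)\Rightarrow(1)$; the parallel cycle $(1)\Rightarrow(5)\Rightarrow(3)\Rightarrow(1)$ then follows by swapping the roles of $T$ and $S$, since the identity $\|T+\lambda S\|_A=\|S+\overline{\lambda}T\|_A$ shows that $T\parallel_A S$ with witnessing phase $\lambda$ is equivalent to $S\parallel_A T$ with witnessing phase $\overline{\lambda}$. The degenerate cases are disposed of first: if $\|T\|_A=0$, i.e.\ $AT=0$, then $\|T+\lambda S\|_A=\|S\|_A=\|T\|_A+\|S\|_A$ for any $\lambda\in\mathbb{T}$, and the operator $\|S\|_AT-\lambda\|T\|_AS=\|S\|_AT$ has $A$-seminorm zero, so both $(1)$ and $(2)$ hold trivially; the case $\|S\|_A=0$ is symmetric. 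From now on assume $\|T\|_A\|S\|_A\neq 0$.

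For $(1)\Rightarrow(4)$, apply Theorem~\ref{main1} together with Remark~\ref{remparallelism} to produce a sequence of $A$-unit vectors $\{x_n\}$ with $\|Tx_n\|_A\to\|T\|_A$ and $\|Sx_n\|_A\to\|S\|_A$; after extracting a subsequence that stabilises the phase of $\langle Tx_n,Sx_n\rangle_A$, one may further assume $\langle Tx_n,Sx_n\rangle_A\to\lambda\|T\|_A\|S\|_A$ for some $\lambda\in\mathbb{T}$. Expanding
$\|Tx_n-\lambda(\|T\|_A/\|S\|_A)Sx_n\|_A^2=\|Tx_n\|_A^2-2\Re\bigl(\overline{\lambda}(\|T\|_A/\|S\|_A)\langle Tx_n,Sx_n\rangle_A\bigr)+(\|T\|_A/\|S\|_A)^2\|Sx_n\|_A^2$
and passing to the limit gives $\|T\|_A^2-2\|T\|_A^2+\|T\|_A^2=0$, which is exactly $(4)$.

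For $(4)\Rightarrow(2)$, set $U:=\|S\|_AT-\lambda\|T\|_AS$, so that $\|Ux_n\|_A=\|S\|_A\|Tx_n-\lambda(\|T\|_A/\|S\|_A)Sx_n\|_A\to 0$. The reverse triangle inequality then yields, for every $\gamma\in\mathbb{C}$, the bound $\|T+\gamma U\|_A\geq\|(T+\gamma U)x_n\|_A\geq\|Tx_n\|_A-|\gamma|\|Ux_n\|_A\to\|T\|_A$, so $T\perp_A^{BJ}U$, which is $(2)$.

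The main obstacle is the implication $(2)\Rightarrow(1)$, where a norming sequence for $\parallel_A$ must be recovered from the one-sided inequality $\|T+\gamma U\|_A\geq\|T\|_A$. The key tool is the sequential (Bhatia--\v{S}emrl type) characterization of $A$-Birkhoff--James orthogonality due to Zamani~\cite{zamani2019_2}: $T\perp_A^{BJ}U$ if and only if there exists a sequence of $A$-unit vectors $\{x_n\}$ with $\|Tx_n\|_A\to\|T\|_A$ and $\langle Tx_n,Ux_n\rangle_A\to 0$. Substituting $U=\|S\|_AT-\lambda\|T\|_AS$ rewrites the latter condition as $\|S\|_A\|Tx_n\|_A^2-\overline{\lambda}\|T\|_A\langle Tx_n,Sx_n\rangle_A\to 0$; together with $\|Tx_n\|_A\to\|T\|_A$ this forces $\langle Tx_n,Sx_n\rangle_A\to\lambda\|T\|_A\|S\|_A$, so $|\langle Tx_n,Sx_n\rangle_A|\to\|T\|_A\|S\|_A$, and a final appeal to Theorem~\ref{main1} yields $T\parallel_A S$, closing the cycle.
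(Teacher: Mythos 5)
Your proof is correct and rests on exactly the same ingredients as the paper's: Theorem \ref{main1} together with Lemma \ref{lem3} and Remark \ref{remparallelism} on the parallelism side, Zamani's sequential characterization (Theorem \ref{zamaniABJ}) on the orthogonality side, and the same algebraic expansions of $\langle Tx_n,(\|S\|_AT-\lambda\|T\|_AS)x_n\rangle_A$ and of $\|Tx_n-\lambda(\|T\|_A/\|S\|_A)Sx_n\|_A^2$. The only difference is organizational: you close a single cycle $(1)\Rightarrow(4)\Rightarrow(2)\Rightarrow(1)$ (with $(4)\Rightarrow(2)$ obtained directly from the definition of $\perp_A^{BJ}$ via the reverse triangle inequality) and invoke the $T\leftrightarrow S$ symmetry for $(3)$ and $(5)$, whereas the paper proves each of $(2)$--$(5)$ separately equivalent to $(1)$; your arrangement is harmless and slightly more economical.
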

In order to prove Theorem \ref{Dau} we need to recall from \cite{zamani2019_2} the following result.
\begin{thqt} \label{zamaniABJ}(\cite{zamani2019_2})
Let $T,S\in \mathcal{B}_{A^{1/2}}(\mathcal{H})$. Then, $T\perp^{BJ}_A S$
if and only if there exists a sequence of $A$-unit vectors $\{x_n\}$ in $\mathcal{H}$ such that
\begin{align*}
\displaystyle{\lim_{n\rightarrow +\infty}}{\|Tx_n\|}_A = {\|T\|}_A
\quad \mbox{and} \quad \displaystyle{\lim_{n\rightarrow +\infty}}{\langle Tx_n, Sx_n\rangle}_A = 0.
\end{align*}
\end{thqt}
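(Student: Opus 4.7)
The plan is to use sequences of $A$-unit vectors as the unifying thread: Theorem \ref{main1} together with Remark \ref{remparallelism} characterizes (1) by the existence of $A$-unit vectors $\{x_n\}$ and $\lambda\in\mathbb{T}$ with $\langle Tx_n,Sx_n\rangle_A\to\lambda\|T\|_A\|S\|_A$ (and then automatically $\|Tx_n\|_A\to\|T\|_A$ and $\|Sx_n\|_A\to\|S\|_A$), while Theorem \ref{zamaniABJ} puts (2) and (3) into analogous sequence form. I would first dispatch the degenerate case $\|T\|_A\|S\|_A=0$: if for instance $AT=0$, then $T\parallel_A S$ holds trivially (with any unit scalar), the element $\|S\|_A T-\lambda\|T\|_A S$ vanishes so (2) reduces to $T\perp^{BJ}_A 0$, and (3) is symmetric. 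Hence I assume $\|T\|_A\|S\|_A\neq 0$ and establish the chain $(1)\Rightarrow(4)\Rightarrow(2)\Rightarrow(1)$; the chain $(1)\Rightarrow(5)\Rightarrow(3)\Rightarrow(1)$ then follows by interchanging the roles of $T$ and $S$ (replacing $\lambda$ by $\overline{\lambda}$) and using the invariance of $A$-Birkhoff--James orthogonality under nonzero scaling of its right argument.

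For $(1)\Rightarrow(4)$, I would take $\{x_n\}$ and $\lambda$ from Lemma \ref{lem3}, expand
\[\Big\|Tx_n-\lambda\tfrac{\|T\|_A}{\|S\|_A}Sx_n\Big\|_A^2=\|Tx_n\|_A^2-2\Re\!\Big(\overline{\lambda}\tfrac{\|T\|_A}{\|S\|_A}\langle Tx_n,Sx_n\rangle_A\Big)+\tfrac{\|T\|_A^2}{\|S\|_A^2}\|Sx_n\|_A^2,\]
and check that the three summands balance to $\|T\|_A^2-2\|T\|_A^2+\|T\|_A^2=0$ in the limit. For $(4)\Rightarrow(1)$, I would read the decomposition
\[\langle Tx_n,Sx_n\rangle_A=\lambda\tfrac{\|T\|_A}{\|S\|_A}\|Sx_n\|_A^2+\Big\langle Tx_n-\lambda\tfrac{\|T\|_A}{\|S\|_A}Sx_n,\,Sx_n\Big\rangle_A,\]
controlling the second summand by Cauchy--Schwarz, to deduce $|\langle Tx_n,Sx_n\rangle_A|\to\|T\|_A\|S\|_A$, and apply Theorem \ref{main1}.

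For $(4)\Rightarrow(2)$, I would feed the same sequence into
\[\langle Tx_n,(\|S\|_A T-\lambda\|T\|_A S)x_n\rangle_A=\|S\|_A\|Tx_n\|_A^2-\overline{\lambda}\|T\|_A\langle Tx_n,Sx_n\rangle_A\to 0,\]
which, combined with $\|Tx_n\|_A\to\|T\|_A$, triggers Theorem \ref{zamaniABJ}. The converse $(2)\Rightarrow(1)$ reads this identity in reverse: Theorem \ref{zamaniABJ} supplies a sequence with $\|Tx_n\|_A\to\|T\|_A$ annihilating the displayed inner product, so dividing by $\|T\|_A\neq 0$ yields $\overline{\lambda}\langle Tx_n,Sx_n\rangle_A\to\|T\|_A\|S\|_A$, and Theorem \ref{main1} closes the loop. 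The main obstacle is essentially bookkeeping: one must be meticulous about which $\lambda\in\mathbb{T}$ is being quantified over in each statement (the phases in (2) and (4) can be chosen to coincide, while those in (3) and (5) appear conjugated by the $T\leftrightarrow S$ swap), and one must ensure that a single sequence $\{x_n\}$ threads through every implication without needing to pass to an extra subsequence.
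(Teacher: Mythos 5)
Your proposal does not prove the statement it was asked to prove. The statement is Theorem~\ref{zamaniABJ} itself: the sequential characterization of $A$-Birkhoff--James orthogonality, namely that $T\perp^{BJ}_A S$ holds if and only if there is a sequence of $A$-unit vectors $\{x_n\}$ with $\|Tx_n\|_A\to\|T\|_A$ and $\langle Tx_n,Sx_n\rangle_A\to 0$. This is a result the paper only quotes from Zamani's work and never proves. What you have written instead is a proof outline for Theorem~\ref{Dau} (the five-fold equivalence involving $T\parallel_A S$), and in carrying it out you repeatedly invoke Theorem~\ref{zamaniABJ} as a known tool --- for instance you ``trigger Theorem~\ref{zamaniABJ}'' to pass between the orthogonality conditions and the sequence conditions. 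Relative to the assigned statement this is circular: the very equivalence you were supposed to establish is being assumed as a black box, and the equivalence you actually argue for is a different theorem.

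A genuine proof of Theorem~\ref{zamaniABJ} must start from the definition $\|T+\gamma S\|_A\geq\|T\|_A$ for all $\gamma\in\mathbb{C}$. The sufficiency half is short: given such a sequence, $\|T+\gamma S\|_A^2\geq\liminf_n\|(T+\gamma S)x_n\|_A^2=\liminf_n\bigl(\|Tx_n\|_A^2+2\Re\bigl(\overline{\gamma}\langle Tx_n,Sx_n\rangle_A\bigr)+|\gamma|^2\|Sx_n\|_A^2\bigr)\geq\|T\|_A^2$, since the cross term vanishes in the limit and the last term is nonnegative. The necessity half is the substantial one; it requires either transferring the problem to the Hilbert space $\mathbf{R}(A^{1/2})$ via Lemma~\ref{lem2} (using $\|T\|_A=\|\widetilde{T}\|_{\mathcal{B}(\mathbf{R}(A^{1/2}))}$ together with the classical sequential characterization of Birkhoff--James orthogonality for bounded Hilbert-space operators, in the spirit of Bhatia--\v{S}emrl and Paul) or a direct variational argument on the function $\gamma\mapsto\|T+\gamma S\|_A^2$. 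None of this appears in your proposal, so as a proof of Theorem~\ref{zamaniABJ} it is missing essentially all of the required content.
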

Now we are ready to prove Theorem \ref{Dau}.
\begin{proof}[Proof of Theorem \ref{Dau}]
$(1)\Leftrightarrow(2):$ Assume that $T\parallel_A S$. If $\|S\|_A=0$, then by using \eqref{newsemi} it can be seen that the assertion $(2)$ holds. Now, suppose that $\|S\|_A\neq0$. Since $T\parallel_A S$, then by Lemma \ref{lem3} there exist a sequence of $A$-unit vectors $\{x_n\}$ in $\mathcal{H}$ and $\lambda\in \mathbb{T}$ such that
 $$
 \lim\limits_{n\to \infty}  \langle Tx_n, Sx_n\rangle_A=\lambda\|T\|_A\|S\|_A.$$ So, by Remark \ref{remparallelism} $\lim\limits_{n\to \infty}\|Tx_n\|_A=\|T\|_A$. Furthermore, we see that
\begin{align*}
\lim_{n\rightarrow\infty}  \langle Tx_n,  (\|S\|_A T-\lambda \|T\|_AS)x_n\rangle_A
&=\lim_{n\rightarrow\infty}  \|S\|_A \|Tx_n\|_A^2 -\overline{\lambda} \|T\|_A \langle Tx_n, Sx_n\rangle_A\nonumber\\
&=\|S\|_A\|T\|_A^2-\|T\|_A^2\|S\|_A=0.
\end{align*}
Thus, in view of Theorem \ref{zamaniABJ}, the second assertion holds. Conversely, assume $T \perp _A^{BJ}  \|S\|_A T-\lambda \|T\|_AS,$ for some $\lambda \in \mathbb{T}.$ If $\|T\|_A= 0$, then obviously $T\parallel_AS$. Suppose that $\|T\|_A\neq0$. By Theorem \ref{zamaniABJ}, there exists a sequence of $A$-unit vectors $\{y_n\}$ in $\mathcal{H}$ such that
 \begin{equation*}
 \lim_{n\to \infty} \|Ty_n\|_A=\|T\|_A\quad \text{and}\quad \lim_{n\rightarrow\infty}  \langle Ty_n,  (\|S\|_A T-\lambda \|T\|_AS)y_n\rangle_A=0.
 \end{equation*}
 Then, we deduce that
 $$
 \lim_{n\to \infty} \langle Ty_n, Sy_n\rangle_A=\frac{\lambda}{\|T\|_A}\lim_{n\to \infty} \|S\|_A\|Ty_n\|_A^2=\lambda\|T\|_A\|S\|_A.
 $$
$(1)\Leftrightarrow(3):$ The proof is analogous to the previous equivalence by changing the roles between $T$ and $S$.\\
$(1)\Leftrightarrow(4):$ By Lemma \ref{lem3} and Remark \ref{remparallelism}, there exist a sequence of $A$-unit vectors $\{x_n\}$ in $\mathcal{H}$ and $\lambda\in \mathbb{T}$ such that
$\lim\limits_{n\to \infty} \langle Tx_n, Sx_n\rangle=\lambda \|T\|_A\|S\|_A$,
  $\lim\limits_{n\to \infty}\|Tx_n\|_A=\|T\|_A$ and $\lim\limits_{n\to \infty}\|Sx_n\|_A=\|S\|_A.$ Thus
\begin{align*}
&\left\Vert Tx_n-\lambda\frac{\|T\|_A}{\|S\|_A}Sx_n\right\Vert_A^2\\
&=  \|Tx_n\|_A^2-\overline{\lambda}\frac{\|T\|_A}{\|S\|_A}\langle Tx_n, Sx_n\rangle_A  -\lambda\frac{\|T\|_A}{\|S\|_A}\langle Sx_n, Tx_n\rangle_A+\frac{\|T\|_A^2}{\|S\|_A^2}\|Sx_n\|_A^2,
\end{align*}
and so $\lim\limits_{n\to \infty}\left\Vert Tx_n-\lambda\frac{\|T\|_A}{\|S\|_A}Sx_n\right\Vert_A^2=0.$\\
Conversely, suppose that (4) is holds. Then
\begin{align*}
\|S\|_A+\|T\|_A
&\geq \|T+\lambda S\|_A\geq \|Tx_n+\lambda Sx_n\|_A\nonumber\\
&=\left\Vert(Tx_n-\lambda\frac{\|T\|_A}{\|S\|_A}Sx_n)-(-\lambda Sx_n-\lambda\frac{\|T\|_A}{\|S\|_A}Sx_n)\right\Vert_A\nonumber \\
&\geq \left\Vert\lambda Sx_n+\lambda\frac{\|T\|_A}{\|S\|_A}Sx_n\right\Vert_A - \left\Vert Tx_n-\lambda\frac{\|T\|_A}{\|S\|_A}Sx_n\right\Vert_A \nonumber\\
&= (\|S\|_A+\|T\|_A)\frac{\|Sx_n\|_A}{\|S\|_A}- \left\Vert Tx_n-\lambda\frac{\|T\|_A}{\|S\|_A}Sx_n\right\Vert_A.
\end{align*}
By taking limits, we get $\|S\|_A+\|T\|_A=\|T+\lambda S\|_A$. Then $T\parallel_A S$.\\
$(1)\Leftrightarrow(5):$ The proof is analogous to the previous equivalence by changing the roles between $T$ and $S$.
\end{proof}

\begin{corollary}\label{cor22}
Let $T\in \mathcal{B}_A(\mathcal{H})$. Then the following statements are equivalent:
\begin{itemize}
  \item [(1)] $T\parallel_A I$.
  \item [(2)] $T^p\parallel_A I$ for every $p\in\mathbb{N}$.
  \item [(3)] $T^p\parallel_A ({T^{\sharp_A}})^p$ for every $p\in\mathbb{N}$.
\end{itemize}
\end{corollary}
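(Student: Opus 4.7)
The plan is to reduce everything to the Hilbert-space operator $\widetilde{T}$ via Lemma \ref{lem2} and then exploit the characterization of $A$-parallelism with the identity provided by Theorem \ref{equinew2}. That theorem shows $T\parallel_A I$ is equivalent to $T$ being $A$-normaloid, i.e.\ $r_A(T)=\|T\|_A$, which via Lemma \ref{lem2}(i),(ii) is equivalent to $\widetilde{T}$ being normaloid on the Hilbert space $\mathbf{R}(A^{1/2})$.

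For (1)$\Rightarrow$(2), I would first invoke the standard Hilbert-space fact that a normaloid operator has normaloid powers: if $r(\widetilde{T})=\|\widetilde{T}\|$ then
\[
r(\widetilde{T})^p=r(\widetilde{T}^p)\leq\|\widetilde{T}^p\|\leq\|\widetilde{T}\|^p=r(\widetilde{T})^p,
\]
forcing $\|\widetilde{T}^p\|=r(\widetilde{T}^p)$. Using $\widetilde{T^p}=\widetilde{T}^p$ (immediate from $Z_AT^p=\widetilde{T}^pZ_A$ and the uniqueness in Lemma \ref{lem2}), one then reads off $r_A(T^p)=\|T^p\|_A$ via Lemma \ref{lem2}(i),(ii); that is, $T^p$ is $A$-normaloid, whence Theorem \ref{equinew2} yields $T^p\parallel_A I$. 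The reverse direction (2)$\Rightarrow$(1) is the special case $p=1$.

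For (2)$\Leftrightarrow$(3), I would apply Theorem \ref{th.19} to the operator $T^p\in\mathcal{B}_A(\mathcal{H})$ (note that $\mathcal{B}_A(\mathcal{H})$ is a subalgebra, so $T^p$ lies in the domain where the theorem applies), getting $T^p\parallel_A I\Leftrightarrow T^p\parallel_A (T^p)^{\sharp_A}$. It then remains to replace $(T^p)^{\sharp_A}$ by $(T^{\sharp_A})^p$ inside the parallelism relation, and this is where the $\widetilde{\cdot}$ lift does the work: combining Lemma \ref{lem2}(iv) with the multiplicativity $\widetilde{TS}=\widetilde{T}\widetilde{S}$ yields
\[
\widetilde{(T^p)^{\sharp_A}}=(\widetilde{T^p})^{*}=((\widetilde{T})^{*})^p=(\widetilde{T^{\sharp_A}})^p=\widetilde{(T^{\sharp_A})^p},
\]
and Lemma \ref{lem2}(v) says that $A$-parallelism of two $A^{1/2}$-bounded operators depends only on their $\widetilde{\cdot}$ images, so the substitution is legal. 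I do not foresee any real obstacle; the only point requiring minor care is the bookkeeping that permits interchanging $(T^p)^{\sharp_A}$ and $(T^{\sharp_A})^p$ inside $A$-parallelism statements.
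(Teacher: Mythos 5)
Your proposal is correct, but for the implication $(1)\Rightarrow(2)$ it takes a genuinely different route from the paper. The paper works directly with Theorem~\ref{Dau}: from $T\parallel_A I$ it extracts a sequence of $A$-unit vectors $\{x_n\}$ and $\lambda\in\mathbb{T}$ with $\|(T-\lambda\|T\|_A I)x_n\|_A\to 0$, shows by induction that $\|(T^p-\lambda^p\|T\|_A^p I)x_n\|_A\to 0$, then passes to $\mathbf{R}(A^{1/2})$ to place $\lambda^p\|T\|_A^p$ in $\sigma_a\big((\widetilde{T})^p\big)$ and deduce $\|T^p\|_A=\|T\|_A^p$, after which Theorem~\ref{Dau} gives $T^p\parallel_A I$. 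You instead quote the equivalence $T\parallel_A I\Leftrightarrow T$ is $A$-normaloid from Theorem~\ref{equinew2} and reduce to the classical fact that powers of a normaloid operator are normaloid, via $\widetilde{T^p}=(\widetilde{T})^p$ and Lemma~\ref{lem2}(i),(ii); this is shorter, avoids the induction, still yields $\|T^p\|_A=\|T\|_A^p$ as a byproduct, and makes the paper's subsequent remark (that $(1)\Leftrightarrow(2)$ holds for all $A$-bounded operators) transparent, since Theorem~\ref{equinew2} is stated on $\mathcal{B}_{A^{1/2}}(\mathcal{H})$. There is no circularity, as Theorem~\ref{equinew2} is established independently in Section~\ref{s2}. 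For $(2)\Leftrightarrow(3)$ both you and the paper invoke Theorem~\ref{th.19} applied to $T^p$; you are in fact more careful than the paper in justifying the replacement of $(T^p)^{\sharp_A}$ by $(T^{\sharp_A})^p$ inside the parallelism relation, which you do correctly by comparing the lifts $\widetilde{(T^p)^{\sharp_A}}=(\widetilde{T}^{\,*})^p=\widetilde{(T^{\sharp_A})^p}$ and appealing to Lemma~\ref{lem2}(v).
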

\begin{proof}
(1)$\Rightarrow$(2) Assume that $T\parallel_A I$. Then, by Theorem \ref{Dau}, there exists a sequence of $A$-unit vectors $\{x_n\}$ in $\mathcal{H}$ and $\lambda\in\mathbb{T}$ such that
$$\lim_{n\rightarrow\infty} \Big\|Tx_n-\lambda\|T\|_Ax_n\Big\|_A=0.$$
For every $i\in\mathbb{N}$ we have
\begin{align*}
&\Big\|\left(T^{i+1}-\lambda^{i+1}\|T\|_A^{i+1}I\right)x_n\Big\|_A\\
&=\Big\|T\left(T^{i}-\lambda^{i}\|T\|_A^{i}I\right)x_n+\lambda^{i}\|T\|_A^{i}\left(T-\lambda\|T\|_AI\right)x_n\Big\|_A\\
&\leq \|T\|_A\,\Big\|(T^{i}-\lambda^{i}\|T\|_A^{i}I)x_n\Big\|_A+\|T\|_A^i\,\Big\|(T-\lambda\|T\|_AI)x_n\Big\|_A.
\end{align*}
So, by induction, it can be shown that for every $p\in\mathbb{N}$ we have
\begin{equation}\label{cc2}
\lim_{n\rightarrow\infty} \Big\|(T^p-\lambda^p\|T\|_A^pI)x_n\Big\|_A=0.
\end{equation}
This implies, by Lemma \ref{lem2}, that
$$\lim_{n\rightarrow\infty} \Big\|\left((\widetilde{T})^p-\lambda^p\|T\|_A^pI_{\mathbf{R}(A^{1/2})}\right)Ax_n\Big\|_{\mathcal{B}(\mathbf{R}(A^{1/2}))}=0,$$
for every $p\in\mathbb{N}$. Hence, $\lambda^p\|T\|_A^p\in \sigma_a\left( (\widetilde{T})^p\right)$. So, we obtain
$$\|\widetilde{T}\|_{\mathcal{B}(\mathbf{R}(A^{1/2}))}^p\leq r\left((\widetilde{T})^p\right)\leq\left\|(\widetilde{T})^p\right\|_{\mathcal{B}(\mathbf{R}(A^{1/2}))}\leq\left\|\widetilde{T}\right\|_{\mathcal{B}(\mathbf{R}(A^{1/2}))}^p.$$
Thus, an application of Lemma \ref{lem2}(i) gives $\|T\|_A^p=\|T^p\|_A$. So, by taking into consideration \eqref{cc2} we get
\begin{equation*}
\lim_{n\rightarrow\infty} \Big\|(T^p-\lambda^p\|T^p\|_AI)x_n\Big\|_A=0,
\end{equation*}
for every $p\in\mathbb{N}$. Therefore, by Theorem \ref{Dau}, we get $T^p\parallel_A I$.

Now, the implications (2)$\Rightarrow$(3) and (3)$\Rightarrow$(1) follow immediately by using the equivalences  of Theorem \ref{th.19}.
\end{proof}

\begin{remark}
Notice that the equivalence $(1)\Leftrightarrow (2)$ in Corollary \ref{cor22} holds also for $A$-bounded operators.
\end{remark}

A special case of $A$-seminorm-parallelism between an $A$-bounded operator $T\in \mathcal{B}_{A^{1/2}}(\mathcal{H})$ and the identity operator, is the following equation:
\begin{equation}\label{daugavet}
\|T+ I\|_A=\|T\|_A+1.
\end{equation}
If $T\in \mathcal{B}_{A^{1/2}}(\mathcal{H})$ and satisfies \eqref{daugavet}, we shall say that $T$ satisfies the $A$-Daugavet equation. We remind here that the first person who study the equation \eqref{daugavet} for $A=I$ was I. K. Daugavet \cite{IKDaugavet}, which is one useful property in solving a variety of problems in approximation theory. Abramovich et al. \cite{AAB} proved that $T\in\mathcal{B}(\mathcal{H})$ satisfies the $I$-Daugavet equation (respect to the uniform norm) if and only if $\|T\|$ lies in the approximate point spectrum of $T$.

In the following theorem we shall characterize $A$-bounded operators which satisfy the $A$-Daugavet equation.
\begin{theorem}
Let $T\in \mathcal{B}_{A^{1/2}}(\mathcal{H})$. Then the following conditions are equivalent:
\begin{itemize}
  \item [(1)] $T$ satisfies the
$A$-Daugavet equation, i.e. $\|T+I\|_A=\|T\|_A+1$.
  \item [(2)] $\|T\|_A\in \overline{W_A(T)}.$
\item [(3)] $I \perp_A^{BJ} \|T\|_A I- T.$
\item [(4)] $T \perp_A^{BJ}  T-\|T\|_A I.$
\end{itemize}
\end{theorem}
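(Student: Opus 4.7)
The plan is to take condition (2) as the central hub and prove $(1) \Leftrightarrow (2)$, $(2) \Leftrightarrow (3)$, and $(2) \Leftrightarrow (4)$ in turn. At the outset I would dispose of the trivial case $\|T\|_A = 0$ (equivalently $AT = 0$), in which $W_A(T) = \{0\}$ and every norm of the form $\|T + \gamma I\|_A$ reduces to $|\gamma|$, so all four conditions hold automatically; thereafter I may assume $\|T\|_A > 0$.

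For $(1) \Leftrightarrow (2)$, I would argue through approximating sequences. Given (1), pick $A$-unit vectors $\{x_n\}$ with $\|(T+I)x_n\|_A \to \|T\|_A + 1$; expanding the square and using $\|Tx_n\|_A \leq \|T\|_A$ forces $\Re\langle Tx_n, x_n\rangle_A \to \|T\|_A$. Since $\|T\|_A$ is real and $|\langle Tx_n, x_n\rangle_A| \leq \|T\|_A$ by Cauchy--Schwarz, the imaginary part must vanish, so $\langle Tx_n, x_n\rangle_A \to \|T\|_A \in \overline{W_A(T)}$. Conversely, starting from such a sequence for (2), the same Cauchy--Schwarz squeeze also yields $\|Tx_n\|_A \to \|T\|_A$, and a second expansion of $\|(T+I)x_n\|_A^2$ recovers (1).

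For $(2) \Rightarrow (3)$ and $(2) \Rightarrow (4)$ I would feed the sequence $\{x_n\}$ from (2) into $\gamma$-dependent estimates: for any $\gamma \in \mathbb{C}$, expansions of
\begin{equation*}
\|(I + \gamma(\|T\|_A I - T))x_n\|_A^2 \quad\text{and}\quad \|(T + \gamma(T - \|T\|_A I))x_n\|_A^2
\end{equation*}
each telescope so that in the limit the cross-terms cancel exactly against the quadratic $|\gamma|^2$-terms, leaving $1 = \|I\|_A^2$ and $\|T\|_A^2$ respectively. This forces $\|I + \gamma(\|T\|_A I - T)\|_A \geq 1 = \|I\|_A$ and $\|T + \gamma(T - \|T\|_A I)\|_A \geq \|T\|_A$ for every $\gamma \in \mathbb{C}$, which are exactly (3) and (4).

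For the reverse directions $(3) \Rightarrow (2)$ and $(4) \Rightarrow (2)$ I would invoke Theorem \ref{zamaniABJ}. In case (3) it yields $A$-unit vectors $\{y_n\}$ (the $\|Iy_n\|_A \to 1$ condition being automatic) with $\langle y_n, (\|T\|_A I - T)y_n\rangle_A \to 0$, which unpacks as $\overline{\langle Ty_n, y_n\rangle_A} \to \|T\|_A$ and hence gives (2). In case (4) the same theorem supplies $y_n$ with $\|Ty_n\|_A \to \|T\|_A$ and $\|Ty_n\|_A^2 - \|T\|_A \langle Ty_n, y_n\rangle_A \to 0$; dividing by $\|T\|_A > 0$ again produces $\langle Ty_n, y_n\rangle_A \to \|T\|_A$. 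The main delicate point throughout is the real/imaginary bookkeeping: almost every step is a routine expansion of a squared $A$-seminorm, but the passage from $\Re\langle Tx_n, x_n\rangle_A \to \|T\|_A$ to the full complex limit, invoked at both $(1) \Rightarrow (2)$ and $(3) \Rightarrow (2)$, genuinely relies on the Cauchy--Schwarz cap $|\langle Tx_n, x_n\rangle_A| \leq \|T\|_A$ to squeeze the modulus to its maximum.
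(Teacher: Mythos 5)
Your proposal is correct and follows essentially the same route as the paper: both use $(1)\Leftrightarrow(2)$ via the Cauchy--Schwarz squeeze forcing $\|Tx_n\|_A\to\|T\|_A$ and the vanishing of the imaginary part, and both reduce $(3)$ and $(4)$ to the sequential characterization of $A$-Birkhoff--James orthogonality (Theorem~\ref{zamaniABJ}). The only cosmetic difference is that for $(2)\Rightarrow(3)$ and $(2)\Rightarrow(4)$ you verify the defining inequality $\|X+\gamma Y\|_A\ge\|X\|_A$ directly by expanding along the sequence (in effect inlining the sufficiency half of Theorem~\ref{zamaniABJ}), whereas the paper simply cites that theorem; both computations are sound.
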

\begin{proof}
(2) $\Rightarrow$ (1) Assume that $\|T\|_A\in \overline{W_A(T)}$. Then, there exits a sequence of $A$-unit vectors $\{x_n\}$ in $\mathcal{H}$ such that
$\lim\limits_{n\to \infty}\langle Tx_n, x_n\rangle_A=\|T\|_A.$ Thus
\begin{equation}\label{mm1}
\displaystyle{\lim_{n\rightarrow +\infty}}\Re(\langle Tx_n, x_n\rangle)_{A} = {\|T\|}_{A}.
\end{equation}
 Further, since
\begin{align*}
{\|T\|}^2_{A} + 2|{\langle Tx_n,x_n\rangle}_{A}| + 1
& \leq {\|T\|}^2_{A} + 2{\|Tx_n\|}_{A} + 1\\
& \leq {\|T\|}^2_{A} + 2{\|T\|}_{A} + 1
= ({\|T\|}_{A} + 1)^2,
\end{align*}
for all $n\in \mathbb{N}$, we get
\begin{align}\label{I.3.T.3.9}
\displaystyle{\lim_{n\rightarrow +\infty}}{\|Tx_n\|}_{A} ={\|T\|}_{A}.
\end{align}
Hence, by using \eqref{mm1} together with \eqref{I.3.T.3.9} we see that
\begin{align*}
({\|T\|}_{A} + 1)^2 &= \displaystyle{\lim_{n\rightarrow +\infty}}{\|Tx_n\|}^2_{A}
+ 2\displaystyle{\lim_{n\rightarrow +\infty}}\Re(\langle Tx_n, x_n\rangle_{A})
+ 1\\
& = \displaystyle{\lim_{n\rightarrow +\infty}}{\|(T + I)x_n\|}^2_{A}\leq {\|T + I\|}^2_{A}\leq ({\|T\|}_{A} + 1)^2.
\end{align*}
So ${\|T + I\|}_{A} = {\|T\|}_{A} + 1$.

(1) $\Rightarrow$ (2) Suppose that $\|T+I\|_A=\|T\|_A+1$. Then, by \eqref{semii} there exists a sequence of $A$-unit vectors $\{x_n\}$ in $\mathcal{H}$ such that
\begin{equation}\label{triangleeq}
\lim\limits_{n\to \infty} \|Tx_n+x_n\|_A=\|T\|_A+1.
\end{equation}
Since
$$
\|Tx_n+x_n\|_A\leq \|Tx_n\|_A+1\leq \|T\|_A+1,
$$
then, by using \eqref{triangleeq}, we conclude that
\begin{equation}\label{norm2}
\lim\limits_{n\to \infty} \|Tx_n\|_A=\|T\|_A.
\end{equation}
 On the other hand, since
 \begin{equation*}
 \|Tx_n+x_n\|_A^2=\|Tx_n\|_A^2+1 +2\Re(\langle Tx_n , x_n\rangle_A),
 \end{equation*}
for all $n\in \mathbb{N}$, then it follows from \eqref{triangleeq} together with \eqref{norm2} that
\begin{equation}\label{normre}
\lim\limits_{n\to \infty} \Re(\langle Tx_n , x_n\rangle_A)=\|T\|_A,
\end{equation}
for all $n\in \mathbb{N}$. Further, for every $n\in \mathbb{N}$, we see that
\begin{align*}
\Re^2(\langle Tx_n, x_n\rangle_{A})\leq\Re^2(\langle Tx_n, x_n\rangle_{A}) + \Im^2(\langle Tx_n, x_n\rangle_{A})
= |{\langle Tx_n, x_n\rangle}_{A}|^2 \leq {\|T\|}_{A}^2,
\end{align*}
and so by \eqref{normre}, we infer that
$\displaystyle{\lim_{n\rightarrow +\infty}}\Im(\langle Tx_n, Sx_n\rangle_{A}) = 0$. This yields, through \eqref{normre}, that
\begin{align*}
\displaystyle{\lim_{n\rightarrow +\infty}}{\langle Tx_n, x_n\rangle}_{A} = {\|T\|}_{A}.
\end{align*}
 Thus, we conclude that $\|T\|_A\in \overline{W_A(T)}$.

(1) $\Leftrightarrow$ (3) Assume that $T$ satifies the $A$-Daugavet equation. Then, by the equivalence between $(1)$ and $(2)$, we have $\|T\|_A\in \overline{W_A(T)}$. So, there exists a sequence of $A$-unit vectors $\{x_n\}$ in $\mathcal{H}$ satisfying
\begin{equation}\label{lambda1}
\lim_{n\rightarrow\infty} \langle Tx_n, x_n\rangle_A=\|T\|_A.
\end{equation}
This implies that
\begin{equation*}
\lim_{n\rightarrow\infty} \|Ix_n\|_A=\|I\|_A=1 \quad \text{ and }\quad \lim_{n\rightarrow\infty} \langle (T-\|T\|_A I)x_n, x_n\rangle_A=0,
\end{equation*}
then by Theorem \ref{zamaniABJ}, we have $I \perp_A^{BJ} \|T\|_A I- T$.  The converse is analogous.\\
(1) $\Leftrightarrow$ (4) Assume that $T$ satifies the $A$-Daugavet equation. Let $\{x_n\}$ a sequence of $A$-unit vectors  in $\mathcal{H}$ satisfying \eqref{lambda1}. Then
$$
\|T\|_A\geq \|Tx_n\|_A\geq | \langle Tx_n, x_n\rangle_A|\geq \|T\|_A-\epsilon,
$$
for any $\epsilon >0$ and $n$  large enough. Hence, $\lim\limits_{n\to \infty} \|Tx_n\|_A=\|T\|_A.$ Futhermore,
\begin{equation*}
\lim_{n\rightarrow\infty}  \langle Tx_n, (T-\|T\|_A I)x_n\rangle_A=\lim_{n\rightarrow\infty}   \|Tx_n\|_A^2 - \|T\|_A \langle Tx_n, x_n\rangle_A=0.
\end{equation*}
So, by Theorem \ref{zamaniABJ}, we deduce that $T\perp_A^{BJ}T-\|T\|_A I.$ Conversely, assume that $T\perp_A^{BJ}T-\|T\|_A I$. If $\|T\|_A=0$, then by using \eqref{newsemi} we see that the assertion $(1)$ holds trivially. Now, suppose that $\|T\|_A\neq0$. By Theorem \ref{zamaniABJ}, there exists a sequence of $A$-unit vectors $\{y_n\}$ in $\mathcal{H}$ such that
\begin{equation*}
\lim_{n\rightarrow\infty} \|Ty_n\|_A=\|T\|_A \quad \text{ and }\quad\lim_{n\rightarrow\infty}  \langle Ty_n, (T-\|T\|_A I)y_n\rangle_A=0.
\end{equation*}
So, it follows that
$$\lim\limits_{n\to \infty} \langle Ty_n, y_n\rangle_A=\frac{1}{\|T\|_A}\lim\limits_{n\to \infty} \|Ty_n\|_A^2=\|T\|_A, $$
i.e. $\|T\|_A\in \overline{W_A(T)}.$ Hence, by the equivalence (1)$\Leftrightarrow$(2), the assertion $(1)$ holds. Therefore, the proof is complete.
\end{proof}


\section{$A$-Bikhorff-James orthogonality and distance formulas}\label{s4}

First, we study some inequalities related to $\omega_A(\cdot)$ in order to obtain different bounds that will be useful in the study of $A$-Birkhoff-James orthogonality of operators and distance formulas.

It is useful to recall that the third author proved in \cite[Theorem 2.7.]{feki004} that for every $T\in \mathcal{B}_{A^{1/2}}(\mathcal{H})$ we have
\begin{equation}\label{power2}
\omega_A^2(T)\leq \frac12(\omega_A(T^2)+\|T\|_A^2).
\end{equation}
\begin{remark}
	\begin{enumerate}
		\item Let $\phi:[0, \infty)\to \mathbb{R}$ be any nondecreasing convex function or midpoint convex function. Clearly convexity implies midpoint-convexity. However, there exist midpoint-convex functions that are not
		convex. Such functions can be very strange and interesting. Then
		\begin{equation*}		\phi(\omega_A^2(T))\leq \phi\left(\frac12 [\omega_A(T^2)+\|T\|_A^2]\right)\leq \frac12\phi( w_A(T^2))+\frac12\phi(\|T\|_A^2).
		\end{equation*}
		Now,  we generalize inequality \eqref{power2} for any $r\geq 1$. Let $\phi(x)=x^r$ with $r\geq 1$ then
		\begin{equation*}
		\omega_A^{2r}(T)\leq \frac12(\omega_A^r(T^2)+\|T\|_A^{2r}).
		\end{equation*}
	\end{enumerate}
\end{remark}

For $T, S\in\mathcal{B}_{A^{1/2}}(\mathcal{H})$, the $A$-distance between $T$ and $S$ is defined by Zamani in \cite{zamani2019_2} as
$$d_A(T, \mathbb{C}S) := \displaystyle{\inf_{\gamma \in \mathbb{C}}}{\|T + \gamma S\|}_{A}.$$
In the following result, we prove an upper bound for the nonnegative quantity $\|T\|_A^2-\omega_A^2(T),$
with $T\in \mathcal{B}_{A^{1/2}}(\mathcal{H})$ related to $d_A(T, \mathbb{C}I)$.
\begin{theorem}
Let $T\in \mathcal{B}_{A^{1/2}}(\mathcal{H})$. Then,
	\begin{equation}\label{distance}
	\|T\|_A^2-\omega_A^2(T) \leq d_A^2(T, \mathbb{C}I).
	\end{equation}
\end{theorem}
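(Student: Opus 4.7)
\smallskip

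\noindent\textbf{Proof plan.} The plan is to establish, for every fixed $\gamma\in\mathbb{C}$, the pointwise inequality
\begin{equation*}
\|(T+\gamma I)x\|_A^2 \;\geq\; \|Tx\|_A^2 - 2|\gamma|\,\omega_A(T) + |\gamma|^2, \qquad \|x\|_A=1,
\end{equation*}
and then pass to suprema/infima. Once this is done, one rewrites the right-hand side by completing the square,
\begin{equation*}
\|Tx\|_A^2 - 2|\gamma|\,\omega_A(T) + |\gamma|^2
\;=\;
\|Tx\|_A^2 - \omega_A^2(T) + \bigl(|\gamma|-\omega_A(T)\bigr)^2,
\end{equation*}
so discarding the non-negative square and taking the supremum over $A$-unit vectors $x$ will yield $\|T+\gamma I\|_A^2 \geq \|T\|_A^2 - \omega_A^2(T)$, after which an infimum over $\gamma$ closes the argument.

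\smallskip

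First, I would expand
\begin{equation*}
\|(T+\gamma I)x\|_A^2 \;=\; \|Tx\|_A^2 + 2\,\mathrm{Re}\bigl(\overline{\gamma}\,\langle Tx,x\rangle_A\bigr) + |\gamma|^2 ,
\end{equation*}
and bound the cross term by $-2|\gamma|\,|\langle Tx,x\rangle_A|$, using $|\langle Tx,x\rangle_A|\leq\omega_A(T)$ whenever $\|x\|_A=1$, to obtain the claimed pointwise estimate. Next, I would choose an $A$-unit maximizing sequence $\{x_n\}$ with $\|Tx_n\|_A\to\|T\|_A$ (which exists by the definition \eqref{semii}); applying the pointwise inequality along $\{x_n\}$ and using $\|T+\gamma I\|_A^2\ge \|(T+\gamma I)x_n\|_A^2$, then letting $n\to\infty$, gives
\begin{equation*}
\|T+\gamma I\|_A^2 \;\geq\; \|T\|_A^2 - 2|\gamma|\,\omega_A(T) + |\gamma|^2.
\end{equation*}

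\smallskip

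Finally, completing the square on the right and dropping the square,
\begin{equation*}
\|T+\gamma I\|_A^2 \;\geq\; \|T\|_A^2 - \omega_A^2(T) + \bigl(|\gamma|-\omega_A(T)\bigr)^2 \;\geq\; \|T\|_A^2 - \omega_A^2(T),
\end{equation*}
valid for every $\gamma\in\mathbb{C}$. Taking the infimum over $\gamma$ yields
\begin{equation*}
d_A^2(T,\mathbb{C}I)\;=\;\inf_{\gamma\in\mathbb{C}}\|T+\gamma I\|_A^2 \;\geq\; \|T\|_A^2 - \omega_A^2(T),
\end{equation*}
which is exactly \eqref{distance}. There is no serious obstacle here: the only mild care is in handling the supremum (one cannot in general swap $\sup_x$ with a lower bound that itself depends on $x$, but since the dependence on $x$ is only through $\|Tx\|_A^2$ and $\omega_A(T)$ is a uniform bound, the maximizing-sequence argument works cleanly). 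Note that the finiteness of $\|T\|_A$ and $\omega_A(T)$, guaranteed by $T\in\mathcal{B}_{A^{1/2}}(\mathcal{H})$ via \eqref{semii} and \eqref{equiomega}, is implicitly used throughout.
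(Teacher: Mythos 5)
Your proof is correct and follows essentially the same route as the paper's: both arguments expand the quadratic form $\|(T+\gamma I)x\|_A^2$, control the cross term via Cauchy--Schwarz and $|\langle Tx,x\rangle_A|\le\omega_A(T)$, and then optimize over the scalar. The only cosmetic difference is in the order of operations --- the paper first minimizes exactly over the scalar pointwise, via the identity $\inf_{\lambda\in\mathbb{C}}\|a-\lambda b\|^2=\bigl(\|a\|^2\|b\|^2-|\langle a,b\rangle|^2\bigr)/\|b\|^2$ applied with $a=A^{1/2}Tz$ and $b=A^{1/2}z$, and only then passes to the supremum over $A$-unit vectors, whereas you bound the cross term uniformly first and complete the square in $|\gamma|$ at the end; the resulting estimates coincide.
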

\begin{proof}
Notice first that for any $a, b\in \mathcal{H}$ with $b\neq 0$, we have
	\begin{equation*}
	\inf_{\lambda \in \mathbb{C}} \|a-\lambda b\|^2=\frac{\|a\|^2\|b\|^2-|\langle a, b\rangle|^2}{\|b\|^2}.
	\end{equation*}
This implies that
	\begin{equation}\label{246}
	\|a\|^2\|b\|^2-|\langle a, b\rangle|^2 \leq \|b\|^2 \|a-\lambda b\|^2,
	\end{equation}
	for any $a, b\in \mathcal{H}$ and $\lambda \in \mathbb{C}$. Let $x,y\in \mathcal{H}$ and $\lambda \in \mathbb{C}$. By choosing $a=A^{1/2}x$ and $a=A^{1/2}y$ in \eqref{246} we obtain
	\begin{equation}\label{cota1}
	\|x\|_A^2\|y\|_A^2-|\langle x, y\rangle_A|^2 \leq \|y\|_A^2 \|x-\lambda y\|_A^2,
	\end{equation}
	Now, we choose in \eqref{cota1} $x=Tz$ and $y=z$ with $z\in \mathcal{H}, \|z\|_A=1$ we get
	\begin{equation*}
	\|Tz\|_A^2-|\langle Tz, z\rangle_A|^2 \leq  \|Tz-\lambda z\|_A^2,
	\end{equation*}
	By taking the supremum over $z\in \mathcal{H}$, $\|z\|_A=1$ implies that
	\begin{equation*}
	\|T\|_A^2-\omega_A^2(T) \leq  \inf_{\lambda \in \mathbb{C}} \|T-\lambda I\|_A^2.
	\end{equation*}
This finishes the proof of the theorem.
\end{proof}
\begin{remark}
By combining \eqref{power2} together with \eqref{distance} we obtain
\begin{equation*}
\omega_A^2(T)-\omega_A(T^2)\leq \frac{1}{2}\left(\|T\|_A^2-\omega_A(T^2)\right)\leq \|T\|_A^2-\omega_A(T^2)\leq d_A^2(T, \mathbb{C}I),
\end{equation*}
for any $T\in \mathcal{B}_{A^{1/2}}(\mathcal{H})$.
\end{remark}

%

We recall from \cite{zamani2019_2} that the $A$-minimum modulus of an operator $T\in\mathcal{B}_{A^{1/2}}(\mathcal{H})$ is given by
\begin{align*}
m_A(T) =\inf\Big\{{\|Tx\|}_A\,; \,\;x\in \mathcal{H}, {\|x\|}_A = 1\Big\}.
\end{align*}
This concept is useful in characterize the $A$-Bikhorff-James orthogonality in $\mathcal{B}_{A^{1/2}}(\mathcal{H}).$
\begin{thqt}  (\cite[Theorem 2.2]{zamani2019_2})\label{zamaniABJ2}
Let $T,S\in \mathcal{B}_{A^{1/2}}(\mathcal{H})$.Then $T\perp_A^{BJ} S$ if and only if
$$\|T+\gamma S\|_A^2\geq \|T\|_A^2+|\gamma|^2m_A^2(S)\; \text{ for all }\;\gamma \in \mathbb{C}.$$
\end{thqt}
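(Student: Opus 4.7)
The plan is to reduce everything to Theorem \ref{zamaniABJ}, which already supplies, from an $A$-Birkhoff-James orthogonality relation, a sequence of $A$-unit vectors along which $\|T\cdot\|_A$ approaches $\|T\|_A$ and the cross-term $\langle T\cdot,S\cdot\rangle_A$ vanishes in the limit. Given such a sequence, the desired inequality should fall out of expanding $\|(T+\gamma S)x_n\|_A^2$ via the semi-inner product.

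First I would dispose of the backward direction: if $\|T+\gamma S\|_A^2\geq \|T\|_A^2+|\gamma|^2 m_A^2(S)$ holds for every $\gamma\in\mathbb{C}$, then in particular $\|T+\gamma S\|_A\geq \|T\|_A$, so $T\perp_A^{BJ} S$ by definition. This step is cosmetic.

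For the forward direction, assume $T\perp_A^{BJ} S$ and invoke Theorem \ref{zamaniABJ} to obtain a sequence of $A$-unit vectors $\{x_n\}$ in $\mathcal{H}$ with $\|Tx_n\|_A\to \|T\|_A$ and $\langle Tx_n, Sx_n\rangle_A\to 0$. Fix $\gamma\in\mathbb{C}$ and expand
\begin{align*}
\|(T+\gamma S)x_n\|_A^2 = \|Tx_n\|_A^2 + 2\Re\bigl(\overline{\gamma}\langle Tx_n, Sx_n\rangle_A\bigr) + |\gamma|^2\|Sx_n\|_A^2.
\end{align*}
The sequence $\{\|Sx_n\|_A^2\}$ is bounded above by $\|S\|_A^2$, so after passing to a subsequence it converges to some $\ell\geq 0$. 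Because each $x_n$ is an $A$-unit vector, the defining infimum of $m_A(S)$ yields $\|Sx_n\|_A\geq m_A(S)$, whence $\ell\geq m_A^2(S)$. Letting $n\to\infty$ along this subsequence and using $\|T+\gamma S\|_A^2\geq \|(T+\gamma S)x_n\|_A^2$ for every $n$, the middle term vanishes and I obtain
\begin{align*}
\|T+\gamma S\|_A^2 \geq \|T\|_A^2 + |\gamma|^2\, m_A^2(S),
\end{align*}
as required.

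I do not anticipate any substantive obstacle: the heavy lifting is already packaged inside Theorem \ref{zamaniABJ}, and what remains is bookkeeping with the expansion of the squared seminorm. The only subtlety worth flagging is that the lower bound $\|Sx_n\|_A\geq m_A(S)$ requires each $x_n$ to be a genuine $A$-unit vector, which is precisely the normalization guaranteed by Theorem \ref{zamaniABJ}.
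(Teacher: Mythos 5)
Your argument is correct. Note, however, that the paper does not prove this statement at all: it is quoted verbatim from Zamani (\cite[Theorem 2.2]{zamani2019_2}) as a known result, so there is no in-paper proof to compare against. What you have actually done is derive Theorem~\ref{zamaniABJ2} from Theorem~\ref{zamaniABJ} (itself also quoted from the same source), and that derivation is sound: the backward implication is immediate from the definition of $\perp_A^{BJ}$, and in the forward direction the expansion of $\|(T+\gamma S)x_n\|_A^2$, the termwise bound $\|Sx_n\|_A\geq m_A(S)$ for $A$-unit vectors, and the limits $\|Tx_n\|_A\to\|T\|_A$, $\langle Tx_n,Sx_n\rangle_A\to 0$ give exactly the claimed inequality. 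One small simplification: the passage to a convergent subsequence of $\{\|Sx_n\|_A^2\}$ is unnecessary, since you can keep the inequality $\|(T+\gamma S)x_n\|_A^2\geq \|Tx_n\|_A^2+2\Re\bigl(\overline{\gamma}\langle Tx_n,Sx_n\rangle_A\bigr)+|\gamma|^2 m_A^2(S)$ for every $n$ and let $n\to\infty$ directly.
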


Let $T,S\in \mathcal{B}_{A^{1/2}}(\mathcal{H})$ with $m_A(S)>0$. Then, by Theorem \ref{zamaniABJ2} there exist a unique $t_0\in\mathbb{C},$ such that
\begin{equation}\label{centermass}
\|(T-t_0S)+\gamma S\|_A^2\geq \|(T-t_0S)\|_A^2+|\gamma|^2m_A^2(S)
\end{equation}
In \cite{stampfli}, for $T\in \mathcal{B}(\mathcal{H})$, Stampfli defined the center of mass of $T$ to be the scalar $c(T)$ that satisfies the
equality
\begin{equation*}
\|T-c(T)I\|=d_I(T,\mathbb{C}I).
\end{equation*}
Given $T,S\in \mathcal{B}_{A^{1/2}}(\mathcal{H})$ with $m_A(S)>0$, we define the $A$-center of mass of  $T$ relatively to $S$ to be the unique point $t_0$, and designate it by $c_A(T,S)$. That is
$$
\|T-c_A(T,S) S\|_A=d_A(T, \mathbb{C}S).
$$
 In \cite[Theorem 3.4]{zamani2019_2}, Zamani proved that if $T, S\in \mathcal{B}_{A^{1/2}}(\mathcal{H})$ with $m_A(S)>0$, then
\begin{equation}\label{distanceformula}
d_A^2(T, \mathbb{C}S)=\sup_{\|x\|_A=1} \left(\|Tx\|_A^2-\frac{|\langle Tx, Sx\rangle_A|^2}{\|Sx\|_A^2}\right).
\end{equation}
One of the methods to compute the center of mass of an operator is Williams's theorem \cite{Williams}. However, it is not usually easy to determine the exact value of it even in the finite dimensional case. In what follows we investigate how to determine explicitly the number $c_A(T,S)$.
\begin{theorem}
Let $T,S\in \mathcal{B}_{A^{1/2}}(\mathcal{H})$ with $m_A(S)>0$ then
\begin{equation*}
c_A(T,S)=\lim\limits_{n\to +\infty}\frac{\langle Tx_n, Sx_n\rangle_A}{\|Sx_n\|_A^2},
\end{equation*}
where  $\{x_n\}$ be a sequence of $A$-unit vectors, approximating the supremum in \eqref{distanceformula}.
\end{theorem}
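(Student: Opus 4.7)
The plan is to compute the limit directly by exploiting the variational characterization of $c_A(T,S)$ together with a Pythagorean-type identity. Set $t_0 = c_A(T,S)$ so that $\|T - t_0 S\|_A = d_A(T,\mathbb{C}S)$, and for each $n$ put $\alpha_n := \frac{\langle Tx_n, Sx_n\rangle_A}{\|Sx_n\|_A^2}$, which is well defined because $\|Sx_n\|_A \geq m_A(S) > 0$. The aim is to show $\alpha_n \to t_0$.

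The first step is to observe that $\alpha_n$ is the scalar that minimizes $\gamma\mapsto \|(T-\gamma S)x_n\|_A$. A direct expansion yields
\begin{equation*}
\|(T-\alpha_n S)x_n\|_A^2 = \|Tx_n\|_A^2 - \frac{|\langle Tx_n, Sx_n\rangle_A|^2}{\|Sx_n\|_A^2}.
\end{equation*}
By the assumption that $\{x_n\}$ approximates the supremum in \eqref{distanceformula}, the right-hand side tends to $d_A^2(T,\mathbb{C}S) = \|T-t_0S\|_A^2$.

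The second (and central) step is a Pythagorean-type identity comparing $T-\alpha_n S$ with $T-t_0 S$ when both are evaluated at $x_n$. The key observation is that
\begin{equation*}
\langle (T-t_0 S)x_n, Sx_n\rangle_A = \langle Tx_n, Sx_n\rangle_A - t_0\|Sx_n\|_A^2 = (\alpha_n - t_0)\|Sx_n\|_A^2.
\end{equation*}
Writing $(T-\alpha_n S)x_n = (T-t_0 S)x_n - (\alpha_n - t_0)Sx_n$ and expanding the resulting $A$-seminorm, this identity makes the cross term collapse, producing
\begin{equation*}
\|(T-t_0 S)x_n\|_A^2 = \|(T-\alpha_n S)x_n\|_A^2 + |\alpha_n - t_0|^2\,\|Sx_n\|_A^2.
\end{equation*}
This is the main technical point, although the computation itself is short.

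The final step is a squeezing argument. Since $\|x_n\|_A = 1$, we have $\|(T-t_0 S)x_n\|_A^2 \leq \|T-t_0 S\|_A^2$, hence
\begin{equation*}
|\alpha_n - t_0|^2\,\|Sx_n\|_A^2 \leq \|T-t_0 S\|_A^2 - \|(T-\alpha_n S)x_n\|_A^2.
\end{equation*}
Passing to the limit, the right-hand side tends to $0$ by Step~1, so $|\alpha_n - t_0|^2 \|Sx_n\|_A^2 \to 0$. Combined with $\|Sx_n\|_A \geq m_A(S) > 0$, this forces $\alpha_n \to t_0$, which is the asserted formula. The only real obstacle is recognizing the Pythagorean identity in Step~2; once this is in place, everything else is automatic from Theorem \ref{zamaniABJ2} and the explicit distance formula \eqref{distanceformula}.
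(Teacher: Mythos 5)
Your proof is correct and is essentially the paper's own argument in a slightly more conceptual dress: your Pythagorean identity $\|(T-t_0S)x_n\|_A^2=\|(T-\alpha_nS)x_n\|_A^2+|\alpha_n-t_0|^2\|Sx_n\|_A^2$ is exactly the identity the paper obtains by direct expansion of $\bigl|\tfrac{\langle Tx_n,Sx_n\rangle_A}{\|Sx_n\|_A}-c_A\|Sx_n\|_A\bigr|^2$, and both proofs then conclude by the same squeeze using $\|(T-t_0S)x_n\|_A\leq\|T-t_0S\|_A=d_A(T,\mathbb{C}S)$ and the lower bound $\|Sx_n\|_A\geq m_A(S)>0$. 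No gaps.
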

\begin{proof}By the hypothesis, $m_A(S)>0$, we can conclude that $\|Sx\|_A\geq m_A(S)>0$ for all $x\in \mathcal{H}$ with $\|x\|_A=1.$
For sake of simplicity we denote $c_A=c_A(T, S)$. Let $\{x_n\}$ be a sequence of $A$-unit vectors, approximating the supremum in \eqref{distanceformula}.  Then
\begin{align*}
&\left|\frac{\langle Tx_n, Sx_n\rangle_A}{\|Sx_n\|_A}-c_A\|Sx_n\|_A\right|^2\nonumber\\
&=\frac{|\langle Tx_n, Sx_n\rangle_A|^2}{\|Sx_n\|_A^2}-2\Re\langle Tx_n, c_ASx_n \rangle_A+|c_A|^2\|Sx_n\|_A^2\nonumber\\
&=\|(T-c_AS)x_n\|_A^2-\|Tx_n\|_A^2+\frac{|\langle Tx_n, Sx_n\rangle_A|^2}{\|Sx_n\|_A^2}\nonumber\\
&\leq \|(T-c_A S)\|_A^2 -\|Tx_n\|_A^2+\frac{|\langle Tx_n, Sx_n\rangle_A|^2}{\|Sx_n\|_A^2}.
\end{align*}
As the operator $S$ is $A$-bounded from below, we obtain the following inequality
\begin{equation*}
\left|\frac{\langle Tx_n, Sx_n\rangle_A}{\|Sx_n\|_A^2}-c_A\right|\leq \frac{1}{m_A(S)}\left|\frac{\langle Tx_n, Sx_n\rangle_A}{\|Sx_n\|_A}-c_A\|Sx_n\|_A\right|\to 0.
\end{equation*}
\end{proof}
 Further, if $S=I$, then
\begin{equation*}
c_A(T,I)=\lim\limits_{n\to +\infty}\langle Tx_n, x_n\rangle_A,
\end{equation*}
where  $\{x_n\}$ be a sequence of $A$-unit vectors, approximating the supremum in \eqref{distanceformula}.
\begin{corollary}
Let $T\in \mathcal{B}_{A}(\mathcal{H})$ with $m_A(T^{\sharp_A})>0$ then
\begin{equation*}
c_A(T,T^{\sharp_A})=\lim\limits_{n\to +\infty}\frac{\langle T^2x_n, x_n\rangle_A}{\|T^{\sharp_A}x_n\|_A^2},
\end{equation*}
where  $\{x_n\}$ be a sequence of $A$-unit vectors, approximating the supremum in \eqref{distanceformula}.
\end{corollary}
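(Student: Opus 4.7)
The plan is to deduce this corollary as an immediate specialization of the preceding theorem applied to the pair $(T, S)$ with $S = T^{\sharp_A}$. The hypotheses match up directly: since $T \in \mathcal{B}_A(\mathcal{H})$, the $A$-adjoint $T^{\sharp_A}$ exists and belongs to $\mathcal{B}_A(\mathcal{H}) \subseteq \mathcal{B}_{A^{1/2}}(\mathcal{H})$, and the hypothesis $m_A(T^{\sharp_A}) > 0$ is exactly the condition required by the theorem. The theorem then yields at once
\begin{equation*}
c_A(T, T^{\sharp_A}) = \lim_{n \to +\infty} \frac{\langle Tx_n, T^{\sharp_A} x_n\rangle_A}{\|T^{\sharp_A} x_n\|_A^2},
\end{equation*}
where $\{x_n\}$ is any sequence of $A$-unit vectors approximating the supremum in \eqref{distanceformula}.

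The only genuine step, then, is to rewrite the numerator in the indicated form. I will use the defining property of the $A$-adjoint, namely $\langle Tx, y\rangle_A = \langle x, T^{\sharp_A} y\rangle_A$ for all $x, y \in \mathcal{H}$. Applied with $x$ replaced by $Tx_n$ and $y$ replaced by $x_n$, this reads
\begin{equation*}
\langle T(Tx_n), x_n\rangle_A = \langle Tx_n, T^{\sharp_A} x_n\rangle_A,
\end{equation*}
so that $\langle Tx_n, T^{\sharp_A} x_n\rangle_A = \langle T^2 x_n, x_n\rangle_A$. Substituting this identity into the formula above produces the desired expression.

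There is no real obstacle here; the result is essentially a notational rearrangement of the preceding theorem once one observes how $A$-adjointness allows the product $T \cdot T^{\sharp_A}$ inside an $A$-inner product to be absorbed into $T^2$. The hypothesis $m_A(T^{\sharp_A}) > 0$ plays no further role beyond legitimizing the use of the theorem itself.
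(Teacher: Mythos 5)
Your proposal is correct and is exactly the intended argument: the paper states this corollary without proof as an immediate specialization of the preceding theorem to $S=T^{\sharp_A}$, with the numerator rewritten via $\langle Tx_n,T^{\sharp_A}x_n\rangle_A=\langle T^2x_n,x_n\rangle_A$ from the defining identity of the $A$-adjoint. Nothing further is needed.
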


In particular is $T$ is $A$-normal, i.e. $T^{\sharp_A}T=TT^{\sharp_A}$ with $m_A(T)>0$,  as $\left|\langle Tx_n, T^{\sharp_A}x_n\rangle_A\right|\leq \|Tx_n\|_A\|T^{\sharp_A}x_n\|_A= \|T^{\sharp_A}x_n\|_A^2,$ we may deduce the inequality $|c_A(T,T^{\sharp_A})|\leq 1$.\\

Using \eqref{centermass} and mimicking the proof in \cite{BaBou}, we obtain the following continuity theorem.
\begin{corollary}
Let $T,S \in \mathcal{B}_{A^{1/2}}(\mathcal{H})$ with $m_A(S)>0$. Then the application
$$
T\to c_A(T, S)
$$ is uniformly continuous.
\end{corollary}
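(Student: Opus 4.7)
The key tool is the strong-convexity-type inequality \eqref{centermass}, which says that for each fixed $T$, the map $c \mapsto \|T - cS\|_A^2$ is minimized at $c_A(T,S)$ and grows quadratically away from it with ``curvature'' at least $m_A^2(S)>0$. The strategy is to evaluate this inequality for each of $T_1,T_2$ at the \emph{other}'s center of mass, and then transfer between the two sides using the triangle inequality in $\|\cdot\|_A$.

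More precisely, write $c_i := c_A(T_i,S)$ for $i=1,2$. Applying \eqref{centermass} to $T_2$ with $\gamma = c_2 - c_1$ and to $T_1$ with $\gamma = c_1 - c_2$ yields
\begin{align*}
\|T_2 - c_1 S\|_A^2 &\ge \|T_2 - c_2 S\|_A^2 + |c_1-c_2|^2 m_A^2(S),\\
\|T_1 - c_2 S\|_A^2 &\ge \|T_1 - c_1 S\|_A^2 + |c_1-c_2|^2 m_A^2(S).
\end{align*}
Adding these and rearranging, I would bracket the combination
\begin{equation*}
\bigl(\|T_1 - c_2 S\|_A^2 - \|T_2 - c_2 S\|_A^2\bigr) + \bigl(\|T_2 - c_1 S\|_A^2 - \|T_1 - c_1 S\|_A^2\bigr) \ge 2 |c_1-c_2|^2 m_A^2(S).
\end{equation*}
Each bracket on the left is a difference of squares $\|X\|_A^2 - \|Y\|_A^2 = (\|X\|_A - \|Y\|_A)(\|X\|_A + \|Y\|_A)$, whose first factor is controlled by the reverse triangle inequality $|\,\|X\|_A - \|Y\|_A\,| \le \|T_1-T_2\|_A$.

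The remaining step is to control the sums $\|T_i - c_j S\|_A + \|T_k - c_j S\|_A$. Using that $c_j$ minimizes $\|T_j - \cdot S\|_A$, together with $d_A(T_j, \mathbb{C}S) \le \|T_j\|_A$ (taking $c=0$ in the defining infimum), each of these sums is bounded in terms of $\|T_1\|_A$, $\|T_2\|_A$ and $\|T_1-T_2\|_A$. Putting everything together produces an estimate of the shape
\begin{equation*}
|c_A(T_1,S) - c_A(T_2,S)|^2 \; m_A^2(S) \;\le\; \|T_1-T_2\|_A^2 + C(T_1,T_2)\,\|T_1-T_2\|_A,
\end{equation*}
where $C(T_1,T_2)$ is bounded on any $\|\cdot\|_A$-bounded subset of $\mathcal{B}_{A^{1/2}}(\mathcal{H})$ (since $\|T_i - c_i S\|_A \le \|T_i\|_A$). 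This yields the claimed uniform continuity on bounded sets, which is the natural meaning here and matches the argument in \cite{BaBou}.

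The main obstacle I anticipate is avoiding an estimate whose constant blows up with $\|T_i\|_A$: symmetrizing by adding the two strong-convexity inequalities (rather than using only one) is what keeps the analysis clean, and the bound $\|T_i - c_i S\|_A \le \|T_i\|_A$ is what ultimately makes $C(T_1,T_2)$ bounded on bounded sets. The hypothesis $m_A(S)>0$ is essential throughout, as it provides the curvature that inverts the estimate into a bound on $|c_1-c_2|$.
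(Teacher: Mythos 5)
Your argument is correct and is essentially the proof the paper has in mind: the paper's own justification is a one-line pointer to \cite{BaBou}, and what you write out --- applying the quadratic lower bound \eqref{centermass} (i.e.\ Theorem~\ref{zamaniABJ2} applied to $T_i-c_A(T_i,S)S\perp_A^{BJ}S$) at the \emph{other} operator's center and converting the resulting difference of squared seminorms via the reverse triangle inequality --- is exactly that argument. Your symmetrization (adding the two strong-convexity inequalities) is a cosmetic variant: a single application of \eqref{centermass} already gives
$|c_1-c_2|^2\,m_A^2(S)\le \|T_1-c_2S\|_A^2-\|T_1-c_1S\|_A^2\le 4\|T_1-T_2\|_A\max\bigl\{d_A(T_2,\mathbb{C}S),\ \|T_1-T_2\|_A\bigr\}$,
using $\|T_1-c_2S\|_A\le\|T_1-T_2\|_A+d_A(T_2,\mathbb{C}S)$ and $d_A(T_1,\mathbb{C}S)\ge d_A(T_2,\mathbb{C}S)-\|T_1-T_2\|_A$.

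One caveat, which you correctly flag yourself: every modulus of continuity produced this way involves $d_A(T_i,\mathbb{C}S)\le\|T_i\|_A$, so the argument yields uniform continuity only on $\|\cdot\|_A$-bounded subsets of $\mathcal{B}_{A^{1/2}}(\mathcal{H})$ (equivalently, a local H\"older-$\tfrac{1}{2}$ estimate), not the unqualified uniform continuity asserted in the corollary. This is not a defect of your write-up relative to the paper --- the cited argument has exactly the same limitation --- but if the statement is to be read literally, an additional idea is needed to remove the dependence on $d_A(T_2,\mathbb{C}S)$, and neither your proposal nor the paper supplies one.
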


In 1981 M. Fujii and S. Prasanna proved  that for any $T\in \mathcal{B}(\mathcal{H})$ the closed circular disc centered at Stampfli's center of mass  and with radius $M_T=d_I(T, \mathbb{C}I)$ contains  the  numerical range  of $T$.  Now, we extend this statement  for the class of
$A$-bounded operators as follows.
\begin{theorem}
Let $T\in \mathcal{B}_{A^{1/2}}(\mathcal{H})$. Then
\begin{equation*}
 W_A(T)\subseteq D\Big(c_A(T, I), d_A(T, \mathbb{C}I)\Big),
\end{equation*}
where $D(\lambda_0, r_0)=\{\lambda\in \mathbb{C}\,;\; |\lambda - \lambda_0|\leq r_0\}$ for any $\lambda_0\in\mathbb{C}$ and $r_0>0$.
\end{theorem}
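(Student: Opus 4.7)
My plan is to verify the inclusion pointwise by a short Cauchy--Schwarz estimate. Let $\lambda \in W_A(T)$. By the definition of the $A$-numerical range, there exists $x \in \mathcal{H}$ with $\|x\|_A = 1$ such that $\lambda = \langle Tx, x\rangle_A$. The first step is to record that $c_A(T, I)$ is well-defined: since $m_A(I) = \inf\{\|x\|_A\,;\,\|x\|_A = 1\} = 1 > 0$, the construction preceding \eqref{distanceformula} applies with $S = I$, yielding a unique scalar $c_A(T, I)$ satisfying
\[
\|T - c_A(T, I)\, I\|_A = d_A(T, \mathbb{C}I).
\]

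The second step is a rewriting of $\lambda - c_A(T, I)$ as a single $A$-semi-inner product. Since $\langle x, x\rangle_A = \|x\|_A^2 = 1$,
\[
\lambda - c_A(T, I) = \langle Tx, x\rangle_A - c_A(T, I)\langle x, x\rangle_A = \langle (T - c_A(T, I)\, I)x, x\rangle_A.
\]

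The third step is the estimate itself. Applying the Cauchy--Schwarz inequality for the positive semidefinite form $\langle \cdot, \cdot\rangle_A$ and then the definition \eqref{semii} of the operator $A$-seminorm, I obtain
\[
|\lambda - c_A(T, I)| \;\leq\; \|(T - c_A(T, I)\, I)x\|_A \cdot \|x\|_A \;\leq\; \|T - c_A(T, I)\, I\|_A \;=\; d_A(T, \mathbb{C}I),
\]
where the last equality uses the first step. This places $\lambda$ in $D\bigl(c_A(T, I),\, d_A(T, \mathbb{C}I)\bigr)$, which is exactly the desired inclusion.

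I do not foresee a real obstacle here: the argument is essentially a one-line Cauchy--Schwarz estimate threaded through the definitions of $W_A(T)$, of $\|\cdot\|_A$ on operators, and of the $A$-center of mass. The only subtlety worth flagging is checking that $c_A(T,I)$ is legitimately defined, which is why I would dispose of the condition $m_A(I)>0$ at the outset rather than inline.
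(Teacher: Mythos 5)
Your proof is correct and follows essentially the same route as the paper's: both arguments reduce to the bound $|\langle (T - c_A(T,I)\,I)x, x\rangle_A| \le \|T - c_A(T,I)\,I\|_A = d_A(T,\mathbb{C}I)$ for $A$-unit vectors $x$. The only difference is cosmetic --- the paper splits into the cases $c_A(T,I)=0$ and $c_A(T,I)\neq 0$, reducing the second to the first by translating $T$, whereas you apply the Cauchy--Schwarz estimate directly to the translated operator, which is slightly cleaner.
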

\begin{proof}
We split the proof in two cases.

Case 1: $c_A(T, I)=0$ i.e. $d_A(T, \mathbb{C}I)=\|T\|_A.$
Then for any $x\in \mathcal{H}$ with $\|x\|_A=1$, we have
\begin{equation}\label{centerzero}
|\langle Tx, x\rangle_A|\leq \omega_A(T)\leq \|T\|_A=d_A(T, \mathbb{C}I).
\end{equation}
Case 2: $c_A(T, I)\neq0$ i.e. $d_A(T, \mathbb{C}I)=\|T-c_A(T,I)I\|_A.$ Let $T_0:=T-c_A(T,I)I$ then $T_0 \in \mathcal{B}_{A^{1/2}}(\mathcal{H})$ and $c_A(T_0, I)=0.$ Applying \eqref{centerzero}, we obtain for any $x\in \mathcal{H}, \|x\|_A=1$
\begin{equation*}
|\langle Tx, x\rangle_A-c_A(T,I)|=|\langle T_0x, x\rangle_A|\leq \|T_0\|_A=d_A(T, \mathbb{C}I).
\end{equation*}
This completes the proof.
\end{proof}

\begin{proposition}
Let $T \in \mathcal{B}_{A^{1/2}}(\mathcal{H})$ then
\begin{equation}\label{compdist}
	d_A(T,\mathbb{C}I)\leq \|T\|_A d_A(I,\mathbb{C}T).
\end{equation}
\end{proposition}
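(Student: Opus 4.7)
The plan is to prove \eqref{compdist} by combining the distance formula \eqref{distanceformula} with the trivial minimax inequality applied to each fixed unit vector. Essentially, I will estimate $d_A(T,\mathbb{C}I)$ vector-by-vector, compute the pointwise best-approximation to $x$ by multiples of $Tx$, and then use that the operator norm dominates every pointwise norm.

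First I would apply \eqref{distanceformula} with $S=I$. Since $m_A(I)=1>0$ (and $\|Ix\|_A=\|x\|_A$), the formula yields
\begin{equation*}
d_A^2(T,\mathbb{C}I)=\sup_{\|x\|_A=1}\Big(\|Tx\|_A^2-|\langle Tx,x\rangle_A|^2\Big).
\end{equation*}
Next, I would fix $x\in\mathcal{H}$ with $\|x\|_A=1$ and compute the pointwise minimum of $\gamma\mapsto\|(I-\gamma T)x\|_A^2=1-2\Re(\overline{\gamma}\langle x,Tx\rangle_A)+|\gamma|^2\|Tx\|_A^2$. The minimizer is the ``projection'' $\gamma_\star=\langle x,Tx\rangle_A/\|Tx\|_A^2$ (when $\|Tx\|_A>0$), giving
\begin{equation*}
\inf_{\gamma\in\mathbb{C}}\|(I-\gamma T)x\|_A^2=\frac{\|Tx\|_A^2-|\langle Tx,x\rangle_A|^2}{\|Tx\|_A^2}.
\end{equation*}

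Then the key observation is the minimax step: for each $\gamma\in\mathbb{C}$ and each $A$-unit vector $x$ we have $\|(I-\gamma T)x\|_A\leq\|I-\gamma T\|_A$, so taking the infimum over $\gamma$ on both sides yields
\begin{equation*}
\inf_{\gamma\in\mathbb{C}}\|(I-\gamma T)x\|_A\leq\inf_{\gamma\in\mathbb{C}}\|I-\gamma T\|_A=d_A(I,\mathbb{C}T).
\end{equation*}
Combining the last two displays (for any $x$ with $\|x\|_A=1$ and $\|Tx\|_A>0$) and clearing denominators gives
\begin{equation*}
\|Tx\|_A^2-|\langle Tx,x\rangle_A|^2\leq\|Tx\|_A^2\,d_A^2(I,\mathbb{C}T)\leq\|T\|_A^2\,d_A^2(I,\mathbb{C}T),
\end{equation*}
while for $x$ with $\|Tx\|_A=0$ the left-hand side is nonpositive (since $|\langle Tx,x\rangle_A|\leq\|Tx\|_A\|x\|_A=0$) so the bound is trivial. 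Finally I would take the supremum over all $A$-unit vectors $x$ to obtain $d_A^2(T,\mathbb{C}I)\leq\|T\|_A^2\,d_A^2(I,\mathbb{C}T)$ and take square roots.

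There is no real obstacle here: the only tiny subtlety is handling the case $\|Tx\|_A=0$ (where the pointwise ``minimization in $\gamma$'' does not have a unique minimizer), but the bound is trivial there, so it does not affect the argument. The crux is recognizing that the vector-valued Pythagorean identity $\inf_\gamma\|x-\gamma Tx\|_A^2=(\|Tx\|_A^2-|\langle Tx,x\rangle_A|^2)/\|Tx\|_A^2$ is exactly what feeds into the distance formula \eqref{distanceformula} used for $d_A(T,\mathbb{C}I)$.
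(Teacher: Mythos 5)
Your argument is correct and follows essentially the same route as the paper: both proofs reduce $d_A^2(T,\mathbb{C}I)$ via \eqref{distanceformula} (with $S=I$, $m_A(I)=1$) to $\sup_{\|x\|_A=1}\big(\|Tx\|_A^2-|\langle Tx,x\rangle_A|^2\big)$ and then bound the pointwise quantity $1-|\langle Tx,x\rangle_A|^2/\|Tx\|_A^2$ by $d_A^2(I,\mathbb{C}T)$. Your explicit minimax step $\inf_{\gamma\in\mathbb{C}}\|(I-\gamma T)x\|_A\le d_A(I,\mathbb{C}T)$, together with the separate treatment of the case $\|Tx\|_A=0$, makes this bound slightly cleaner than the paper's version, which packages it through the auxiliary quantity $\alpha_A(T)$ without further justification.
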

\begin{proof}
Let $x\in \mathcal{H}$ with $\|x\|_A=1$. Then
\begin{equation*}
\alpha_A(T)\|Tx\|_A\leq   |\langle Tx,x\rangle_A|,
\end{equation*}
where $\alpha_A(T)=\inf\left\{\frac{|\langle Ty,y\rangle_A|}{\|Ty\|_A}: \|Ty\|_A\neq0, \|y\|_A=1\right\}$ if $\|T\|_A\neq0$ or $\alpha_A(T)=0$  if $\|T\|_A=0.$ Then
\begin{equation*}
\|Tx\|_A^2-|\langle Tx,x\rangle_A|^2\leq \left(1-\alpha_A^2(T)\right)\|Tx\|_A^2\leq d_A^2(I, \mathbb{C}T)\|Tx\|_A^2.
\end{equation*}
Now calculating the supremum of the both sides, over all $x\in \mathcal{H}$ with $\|x\|_A=1$, we complete the proof.
\end{proof}

From  \eqref{distance} and \eqref{compdist}, we obtain
\begin{equation*}
 \|T\|_A^2-\omega_A^2(T)\leq d_A^2(T, \mathbb{C}I)\leq \|T\|_A^2 d_A^2(I,\mathbb{C}T).
\end{equation*}
\begin{corollary}\label{symmetry of BJ}
Let $T \in \mathcal{B}_{A^{1/2}}(\mathcal{H})$.  If   $T\perp_A^{BJ} I$, then $I\perp_A^{BJ} T$.
\end{corollary}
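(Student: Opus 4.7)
The plan is to combine the inequality \eqref{compdist} with the characterization of $A$-Birkhoff-James orthogonality in terms of the $A$-distance functional $d_A(\cdot,\mathbb{C}\,\cdot)$. The key observation is that $T\perp_A^{BJ} I$ says exactly $\|T+\gamma I\|_A\geq \|T\|_A$ for every $\gamma\in\mathbb{C}$; taking the infimum over $\gamma$ and combining with the trivial bound $d_A(T,\mathbb{C}I)\leq \|T\|_A$ (obtained by choosing $\gamma=0$), this is equivalent to $d_A(T,\mathbb{C}I)=\|T\|_A$. Analogously, $I\perp_A^{BJ} T$ is equivalent to $d_A(I,\mathbb{C}T)\geq \|I\|_A=1$, and again $d_A(I,\mathbb{C}T)\leq 1$ is automatic, so this is equivalent to $d_A(I,\mathbb{C}T)=1$.

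With these reformulations in hand, the proof is almost immediate. Assume $T\perp_A^{BJ} I$, so that $d_A(T,\mathbb{C}I)=\|T\|_A$. Plugging this equality into \eqref{compdist} gives
\begin{equation*}
\|T\|_A \;=\; d_A(T,\mathbb{C}I) \;\leq\; \|T\|_A\cdot d_A(I,\mathbb{C}T).
\end{equation*}
If $\|T\|_A>0$, dividing through by $\|T\|_A$ yields $d_A(I,\mathbb{C}T)\geq 1$, which is exactly $I\perp_A^{BJ} T$.

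The only point requiring an extra line is the degenerate case $\|T\|_A=0$, where the inequality collapses to $0\leq 0$ and gives no information. But if $\|T\|_A=0$ then $AT=0$, and since $A$ is selfadjoint also $T^*A=(AT)^*=0$. A short direct computation then shows
\begin{equation*}
(I+\gamma T)^*A(I+\gamma T)=A+\gamma AT+\bar{\gamma}T^*A+|\gamma|^2 T^*AT=A
\end{equation*}
for every $\gamma\in\mathbb{C}$, so that $\|(I+\gamma T)x\|_A=\|x\|_A$ for all $x$, whence $\|I+\gamma T\|_A=1=\|I\|_A$ and $I\perp_A^{BJ} T$ holds trivially. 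I do not anticipate a genuine obstacle: the whole statement is essentially a one-line algebraic consequence of the previously established distance inequality, with the only care needed being the handling of the possibly seminormed degenerate case.
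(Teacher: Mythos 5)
Your proposal is correct and follows essentially the same route as the paper: both reduce the statement to the inequality \eqref{compdist} together with the observation that $T\perp_A^{BJ}I$ means $d_A(T,\mathbb{C}I)=\|T\|_A$, and then divide by $\|T\|_A$ when it is nonzero. The only (harmless) divergence is in the degenerate case $\|T\|_A=0$: the paper invokes Zamani's distance formula $d_A^2(I,\mathbb{C}T)=\sup\{\|Ix\|_A^2:\|x\|_A=1\}$, whereas you verify $\|I+\gamma T\|_A=1$ directly from $AT=0$, which is a slightly more self-contained way to close the same gap.
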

\begin{proof}
By \eqref{compdist}, we have
\begin{equation*}
\|T\|_A=d_A(T,\mathbb{C}I)\leq \|T\|_A d_A(I,\mathbb{C}T).
\end{equation*}
So, if $\|T\|_A\neq 0$, then $1\leq  d_A(I,\mathbb{C}T) \leq \|I\|_A=1$, i.e. $d_A(I,\mathbb{C}T)=\|I\|_A=1$.

On the other hand, if $\|T\|_A=0$ then $\|Tx\|_A=0$ for all $x\in \mathcal{H}, \|x\|_A=1$. From \cite[Theorem 3.4]{zamani2019_2}, we have that
$$
d_A^2(I, \mathbb{C}T)=\sup\{\|Ix\|_A^2: \|x\|_A=1\}=1=\|I\|_A.
$$
In conclusion, in both cases,  we obtain that  $I\perp_A^{BJ} T$.
\end{proof}
The converse of the previous result is false in general, as we see in the next example

\begin{example}
Consider in $\mathcal{H}=\mathbb{C}^3$ with  the usual uniform norm and  let $\{e_1, e_2, e_3\}$ be the canonical basis for $\mathcal{H}$.\\
Let $A=\begin{pmatrix}
1&0&0\\
0&1&0\\
0&0&0
\end{pmatrix}.$
Then $A=P_{\mathcal{M}}$ the orthogonal projection on $\mathcal{M}=gen\{e_1, e_2\}$ and  $A^2=A^*=A.$\\
Consider $T=\begin{pmatrix}
2&0&0\\
0&-1&0\\
0&0&1
\end{pmatrix}\in \mathcal{B}_{A^{1/2}}(\mathcal{H})$.\\
Let $x=\alpha e_1+\beta e_2 +\gamma e_3 \in \mathcal{H}$ then
\begin{equation*}
\|x\|_A^2=\|(\alpha, \beta, \gamma)\|_A^2=\langle x, x\rangle_A=\langle Ax, Ax\rangle=\|Ax\|^2=|\alpha|^ 2+|\beta|^2=\|(\alpha, \beta)\|^2.
\end{equation*}
Observe that $\|(\alpha, \beta, \gamma)\|_A^2=1$ if and only if $\|(\alpha, \beta)\|^2=1.$\\
Now
\begin{eqnarray*}
\|T\|_A^2&=&\sup \{\|Tx\|_A^2: x\in \mathbb{C}^3, \|x\|_A=1\}=\sup \{\|ATx\|^2: x\in \mathbb{C}^3, \|x\|_A=1\}\nonumber\\
&=&\sup \{\|\overline{T}x\|^2: \overline{x}\in \mathbb{C}^2, \|\overline{x}\|=1\}=\|\overline{T}\|^2=4,
\end{eqnarray*}
where  $\overline{T}=\begin{pmatrix}
2&0\\
0&-1\\
\end{pmatrix}\in \mathcal{B}(\mathbb{C}^2).$\\
If $I_n$ denotes the identity operator in $\mathcal{B}(\mathbb{C}^n)$, then
$$
\inf_{\lambda \in \mathbb{C}}\|T-\lambda I_3\|_A=\inf_{\lambda \in \mathbb{C}}\|\overline{T}-\lambda I_2\|=\frac{3}{2}<\|T\|_A=2,
$$
i.e. $T$ is not $A$-Birkhoff-James to $I_3.$ On the other hand,
$$
\inf_{\lambda \in \mathbb{C}}\|I_3-\lambda T\|_A=\inf_{\lambda \in \mathbb{C}}\|I_2-\lambda\overline{T}\|=1=\|I_3\|_A=1,
$$
that is $I_3 \perp_A^{BJ} T.$
\end{example}

The following result relates $A$-Birkhoff-James orthogonality with the attainment of the lower bound of the $A$-Davis-Wielandt radius.
\begin{theorem}\label{dw attains lower}
	Let $T\in\mathcal{B}_{A^{1/2}}(\mathcal{H})$ such that
	$d\omega_A(T)=\max\{\omega_A(T),\|T\|_A^2\}.$
	Then $T\perp_A^{BJ} I$.
\end{theorem}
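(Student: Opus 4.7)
My plan is to prove this by producing, via Zamani's characterization (Theorem \ref{zamaniABJ} specialized to $S=I$), a sequence of $A$-unit vectors $\{x_n\}$ with $\|Tx_n\|_A\to\|T\|_A$ and $\langle Tx_n,x_n\rangle_A\to 0$. The key observation is that the hypothesis forces the Davis-Wielandt sup to equal $\|T\|_A^2$ (after ruling out a degenerate possibility), and then the quadratic shape of the defining quantity $|\langle Tx,x\rangle_A|^2+\|Tx\|_A^4$ squeezes the inner-product term.

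First I would split into cases according to which quantity realizes $\max\{\omega_A(T),\|T\|_A^2\}$. If $\omega_A(T)>\|T\|_A^2$, then $d\omega_A(T)=\omega_A(T)$. Choosing a sequence of $A$-unit vectors $\{y_n\}$ with $|\langle Ty_n,y_n\rangle_A|\to\omega_A(T)$, the definition of $d\omega_A$ forces
\begin{equation*}
\|Ty_n\|_A^4\leq d\omega_A(T)^2-|\langle Ty_n,y_n\rangle_A|^2=\omega_A(T)^2-|\langle Ty_n,y_n\rangle_A|^2\longrightarrow 0,
\end{equation*}
so $\|Ty_n\|_A\to 0$, and Cauchy-Schwarz yields $|\langle Ty_n,y_n\rangle_A|\leq\|Ty_n\|_A\to 0$, contradicting $|\langle Ty_n,y_n\rangle_A|\to\omega_A(T)>\|T\|_A^2\geq 0$. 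Hence this case cannot occur unless $T=0$ in the $A$-seminorm, in which case $T\perp_A^{BJ}I$ is immediate.

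Therefore I may assume $d\omega_A(T)=\|T\|_A^2$. Pick a sequence of $A$-unit vectors $\{x_n\}$ with $\|Tx_n\|_A\to\|T\|_A$ (which exists by the definition \eqref{semii} of $\|T\|_A$). For each $n$,
\begin{equation*}
|\langle Tx_n,x_n\rangle_A|^2\leq d\omega_A(T)^2-\|Tx_n\|_A^4=\|T\|_A^4-\|Tx_n\|_A^4\longrightarrow 0,
\end{equation*}
so $\langle Tx_n,x_n\rangle_A\to 0$. Invoking Theorem \ref{zamaniABJ} with $S=I$ then gives $T\perp_A^{BJ}I$.

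The only genuinely subtle step is the first case analysis; the rest is essentially a one-line squeeze argument once the right sequence is in hand. I expect no further obstacle, since the Davis-Wielandt upper bound on $|\langle Tx,x\rangle_A|^2+\|Tx\|_A^4$ does all the work once we know $d\omega_A(T)=\|T\|_A^2$.
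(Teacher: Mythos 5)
Your proof is correct and follows essentially the same route as the paper's: both arguments case-split on which quantity realizes the maximum, use the pointwise bound $|\langle Tx,x\rangle_A|^2+\|Tx\|_A^4\leq d\omega_A(T)^2$ to squeeze the complementary term along a maximizing sequence, and conclude via Theorem \ref{zamaniABJ}. The only cosmetic difference is that in the case $d\omega_A(T)=\omega_A(T)$ the paper concludes $\omega_A(T)=0$ and hence $\|T\|_A=0$ directly, whereas you phrase it as a contradiction ruling that case out; the content is identical.
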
	
\begin{proof}
	We separate in two different cases.
	
	Case 1: Suppose $d\omega_A(T)=\|T\|_A^2$ and take a sequence of unitary vectors $\{y_n\}_{n\in \mathbb{N}}$ such that
	$\lim\limits_{n\to +\infty}\|Ty_n\|_A^2=\|T\|_A^2$. Then
	$$\|Ty_n\|_A^2\leq \sqrt{\left| \left\langle Ty_n,y_n \right\rangle_A \right|^2 +\|Ty_n\|_A^4}\leq d\omega_A(T)= \|T\|_A^2,$$
	therefore, $\lim\limits_{n\to +\infty}\left| \left\langle Ty_n,y_n \right\rangle_A\right|^2=0$
	and $0\in W_A(T,I)$. By Th. \ref{zamaniABJ} this is equivalent to
	$T\perp_{BJ}^AI.$\\
	Case 2: Suppose $d\omega_A(T)=\omega_A(T)$ and take a sequence of unitary vectors $\{z_n\}_{n\in \mathbb{N}}$ such that
	$\lim\limits_{n\to +\infty}\left| \left\langle Tz_n,z_n \right\rangle_A \right|=\omega_A(T)$. Then
	$$\left| \left\langle Tz_n,z_n \right\rangle_A \right|\leq \sqrt{\left| \left\langle Tz_n,z_n \right\rangle_A \right|^2 +\|Tz_n\|_A^4}\leq d\omega_A(T)= \omega_A(T),$$
	therefore, $\lim\limits_{n\to +\infty} \|Tz_n\|_A^4=0$. But
	$$\left| \left\langle Tz_n,z_n \right\rangle_A \right|\leq \|Tz_n\|_A\to 0,$$
	thus $\omega_A(T)=0$ and $\|T\|_A=0\leq \|T+\lambda I\|_A$ for every $\lambda\in \mathbb{C}$.
\end{proof}
We arrive to the next conclusion as a combination of Corollary \ref{symmetry of BJ} and Theorem \ref{dw attains lower}.
\begin{corollary}
	Let $T\in\mathcal{B}_{A^{1/2}}(\mathcal{H})$ such that
	$d\omega_A(T)=\max\{\omega_A(T),\|T\|_A^2\}.$
	Then $T\perp_A^{BJ} I$ and $I\perp_A^{BJ} T$.
\end{corollary}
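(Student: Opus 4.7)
The plan is to combine directly the two results cited immediately before the statement. By hypothesis the $A$-Davis-Wielandt radius of $T$ attains its lower bound $\max\{\omega_A(T), \|T\|_A^2\}$, which is precisely the assumption of Theorem \ref{dw attains lower}. Applying that theorem would immediately give the first orthogonality relation $T\perp_A^{BJ} I$.

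Next, I would feed this conclusion into Corollary \ref{symmetry of BJ}, which asserts that whenever $T\perp_A^{BJ} I$ holds, the reverse relation $I\perp_A^{BJ} T$ also holds. Concatenating both implications yields the statement. No auxiliary computation is needed beyond checking that the hypotheses match; in particular we do not need to revisit the dichotomy $\|T\|_A=0$ versus $\|T\|_A\neq 0$ carried out inside Corollary \ref{symmetry of BJ}, since that analysis is already subsumed by its statement.

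There is no substantial obstacle here: the corollary is a purely formal composition of two preceding results, and the body of the proof should be a couple of lines of the form ``By Theorem \ref{dw attains lower} we have $T\perp_A^{BJ} I$; then Corollary \ref{symmetry of BJ} yields $I\perp_A^{BJ} T$.'' The only subtlety worth pointing out, if one wishes to be thorough, is that the direction used from Corollary \ref{symmetry of BJ} is exactly the one stated there (from $T\perp_A^{BJ} I$ to $I\perp_A^{BJ} T$) and not its converse, which the authors have already shown to fail in general via their explicit $3\times 3$ example.
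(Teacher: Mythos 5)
Your proposal is correct and is exactly how the paper obtains this corollary: the authors state it as "a combination of Corollary \ref{symmetry of BJ} and Theorem \ref{dw attains lower}", i.e.\ first $T\perp_A^{BJ} I$ from the theorem, then $I\perp_A^{BJ} T$ from the one-directional symmetry corollary. Nothing further is needed.
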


\begin{remark}
	If $T=x\otimes_Ay$ with $\|x\|_A,\|y\|_A\neq 0$ the attainment of the lower bound of $d\omega_A(T)$ implies that $x\perp_Ay\ \text{or}\ A^{1/2}x\perp A^{1/2}y.$
	By Lemma \ref{lemonerank}, this is equivalent to
	$$\omega_A(x\otimes_Ay)=\frac 1 2 \|x\|_A\|y\|_A$$
	(i.e. the attaiment of the lower bound of $\omega_A(T)$). Indeed, first observe that $$\left|\langle  x, \frac{y}{\|y\|_A}\rangle _A \langle  \frac{y}{\|y\|_A}, y\rangle _A\right|=\left|\frac{1}{\|y\|_A^2}\langle x,y\rangle_A \|y\|_A^2\right|=|\langle x,y\rangle_A|$$
	On the other hand
	$$\left\| (x\otimes_Ay)\frac{y}{\|y\|_A}\right\|_A^4=\frac{1}{\|y\|_A^4}\left\|\left\langle y,y \right\rangle_Ax  \right\|_A^4=\|y\|_A^4\|x\|_A^4.$$
	Then,
	$$\sqrt{\left| \langle (x\otimes_Ay)\frac{y}{\|y\|_A}, \frac{y}{\|y\|_A}\rangle_A\right| ^2+\left\|(x\otimes_Ay)\frac{y}{\|y\|_A}\right\| _A^4}=\sqrt{|\left\langle x,y\right\rangle_A|^2+\|y\|_A^4\|x\|_A^4}$$
	and
	\begin{equation*}
	d\omega_A^2(x\otimes_Ay)\geq |\left\langle x,y\right\rangle_A|^2+\|y\|_A^4\|x\|_A^4.
	\end{equation*}
	If $(\|x\|_A\|y\|_A)^2=\|x\otimes_A y\|_A^2=d\omega_A(x\otimes_A y)$,
	$$\|x\|_A^4\|y\|_A^4=d\omega_A^2(x\otimes_Ay)\geq |\left\langle x,y\right\rangle_A|^2+\|x\|_A^4\|y\|_A^4, $$
	therefore $\left\langle x,y\right\rangle_A=0$.
\end{remark}



\begin{thebibliography}{99} 
\footnotesize

\bibitem{AAB} Y. Abramovich, C.  Aliprantis, O.  Burkinshaw, \textit{The Daugavet equation in uniformly convex Banach spaces},  J. Funct. Anal. \textbf{97} (1991), no. 1, 215--230.

\bibitem{acg1}{M.L. Arias, G. Corach, M.C. Gonzalez,} {Partial isometries in semi-Hilbertian spaces,} Linear Algebra Appl. 428 (7) (2008) 1460-1475.

\bibitem{acg2}{M.L. Arias, G. Corach, M.C. Gonzalez,} {Metric properties of projections in semi-Hilbertian spaces,} Integral Equations and Operator Theory, 62 (2008), pp.11-28.

\bibitem{acg3} {M.L. Arias, G. Corach, M.C. Gonzalez,} {Lifting properties in operator ranges,} Acta Sci. Math. (Szeged) 75:3-4(2009), 635-653.


\bibitem{BaBou}{M. Barraa, M. Boumazgour,} {A note on the orthogonality of bounded linear operators,} Funct. Anal. Approx. Comput.4 (1), (2012) 65--70.

\bibitem{bhunfekipaul}{P. Bhunia, K.Feki, K. Paul,} {$A$-Numerical radius orthogonality and parallelism of semi-Hilbertian space operators and their applications,} Bull. Iran. Math. Soc. (2020). \url{https://doi.org/10.1007/s41980-020-00392-8}


\bibitem{bakfeki01}{H. Baklouti, K. Feki, O.A.M. Sid Ahmed,} {Joint numerical ranges of operators in semi-Hilbertian spaces,}  Linear Algebra Appl. 555 (2018) 266-284.

\bibitem{bakfeki04}{H. Baklouti, K.Feki, O.A.M. Sid Ahmed,} {Joint normality of operators in semi-Hilbertian spaces,} Linear Multilinear Algebra 68(4) 845-866 (2020).

\bibitem{fg}{M. Faghih-Ahmadi, F. Gorjizadeh,} {A-numerical radius of A-normal operators in semi-Hilbertian spaces,} Italian journal of pure and applied mathematics n. 36-2016 (73-78).



\bibitem{branrov}{L. de Branges and J. Rovnyak,} {Square Summable Power Series,} Holt, Rinehert and Winston, New York, 1966.


\bibitem{IKDaugavet} I. K. Daugavet, \textit{On a property of completely continuous operators in the space C},
Uspekhi Mat. Nauk. {\bf 18} (5) (1963), 157-158. [in Russian]

\bibitem{D.3} S. S. Dragomir,
\textit{A potpourri of {S}chwarz related inequalities in inner product spaces. {II}}, JIPAM. J. Inequal. Pure Appl. Math. \textbf{7} (2006), no. 1, Article 14.





\bibitem{Dou}{R.G. Douglas,} {On majorization, factorization and range inclusion of operators in Hilbert space,} Proc. Amer. Math. Soc. 17 (1966) 413-416.


\bibitem{fekilaa} {K. Feki}, {On tuples of commuting operators in positive semidefinite inner product spaces}, Linear Algebra Appl. 603 (2020) 313-328.

\bibitem{feki01} {K. Feki}, {Spectral radius of semi-Hilbertian space operators and its applications}, Ann. Funct. Anal. (2020) \url{https://doi.org/10.1007/s43034-020-00064-y}.

\bibitem{feki03} {K. Feki}, {A note on the $A$-numerical radius of operators in semi-Hilbert spaces}, Arch. Math. (2020). \url{https://doi.org/10.1007/s00013-020-01482-z}

\bibitem{feki004} {K. Feki}, Some numerical radius inequalities for semi-Hilbertian space operators, arxiv:2001.00398v2 [math.FA].

\bibitem{fekisidha2019}{K. Feki and O.A.M. Sid Ahmed,} {Davis-Wielandt shells of semi-Hilbertian space operators and its applications,} Banach J. Math. Anal. (2020) \url{https://doi.org/10.1007/s43037-020-00063-0}.



\bibitem{gu} K. E. Gustafson and D. K. M. Rao, \textit{Numerical Range. The Field of Values of Linear Operators and Matrices}, Universitext, Springer-Verlag, New York, 1997.





\bibitem{M.K.X} M. S. Moslehian, M. Kian and Q. Xu,
\textit{Positivity of $2\times 2$ block matrices of operators},
Banach J. Math. Anal. \textbf{13} (2019), no. 3, 726--743.

\bibitem{majsecesuci}{W. Majdak , N.A. Secelean, L. Suciu,} {Ergodic properties of operators in some semi-Hilbertian spaces,} Linear and Multilinear Algebra,
61:2, (2013) 139-159.


\bibitem{mor} G. J. Murphy, \textit{$C^*$-Algebras and Operator Theory}, Academic Press, New York, 1990.



\bibitem{saddi} {A, Saddi,} \textit{{$A$}-normal operators in semi {H}ilbertian spaces,}Aust. J. Math. Anal. Appl., 9 (2012), no.1, 5-12.

\bibitem{stampfli} J. Stampfli, \textit{The norm of a derivation}. Pacific J. Math. 33 (1970), 737--747.


 \bibitem{Williams} J.P. Williams, \textit{Finite operators}, Proc. Amer. Math. Soc, 26 (1970), 129-136.




\bibitem{zamani3}{A. Zamani,} A-numerical radius and product of semi-Hilbertian operators, Bull. Iran. Math. Soc. (2020). \url{https://doi.org/10.1007/s41980-020-00388-4}


\bibitem{zamani2019_2} A. Zamani, Birkhoff-{J}ames orthogonality of operators in semi-{H}ilbertian spaces and its applications, Ann. Funct. Anal. 10 (3) (2019) 433--445.

 \end{thebibliography}
\end{document}